%% file: ms.tex
\input{preamble/final}

\mathtoolsset{showonlyrefs}

\makeatletter
\@namedef{subjclassname@2020}{\textup{2020} Mathematics Subject Classification}
\makeatother

\title[Free energy analyticity of XY and Debye screening in the Coulomb gas]{Free energy analyticity of the disordered XY model and Debye screening in the 2D Coulomb gas}

\author{Lucas D'Alimonte}
\address{CNRS, Sorbonne Université, LPSM}
\email{dalimonte@lpsm.paris}

\author{Piet Lammers}
\address{CNRS, Sorbonne Université, LPSM}
\email{piet.lammers@cnrs.fr}

\date{27 March 2026}
\keywords{XY model, Coulomb gas, Debye screening, BKT transition, square well model, analyticity of the free energy, factors of i.i.d., coupling from the past, cluster expansion}

\newcommand\leqstoch{\leq_{\operatorname{stoch}}}

\usepackage[
    style=alphabetic,
    date=year,
    maxnames=3,
    maxbibnames=99
]{biblatex}
\AtEveryBibitem{%
  \clearfield{url}%
  \clearfield{urldate}%
  \clearfield{urlday}%
  \clearfield{urlmonth}%
  \clearfield{urlyear}%
  \clearfield{doi}%
  \clearfield{isbn}%
  \clearfield{issn}%
}

\begin{document}
\maketitle

\begin{abstract}
We consider three models of statistical mechanics:
the classical XY model in arbitrary dimension, the lattice Coulomb gas in dimension two,
and the square well model in arbitrary dimension.
For each of these three models, we prove that the free energy is analytic in the disordered regime
(the square well model is disordered at any positive temperature).
In order to prove these results, we prove that the Gibbs measures
of these models are factors of i.i.d.\ with information clusters of exponentially decaying
size (volume).
In the case of the Coulomb gas, we obtain a strong version of Debye screening with an arbitrary number of arbitrary local observables of the Coulomb gas,
and we prove that the Debye phase contains the complement of the Berezinskii--Kosterlitz--Thouless phase of the Villain model.
\end{abstract}

\vspace*{1cm}
\renewcommand{\thepart}{\Roman{part}}
\setcounter{tocdepth}{1}
\tableofcontents

\newpage
\input{sections_new/intro/main.tex}

\part{The square well model}
\label{part:SWM}
\input{sections_new/SWM/main.tex}

\part{The XY model}
\label{part:XY}
\input{sections_new/xy/main.tex}

\input{sections_new/coulomb/main.tex}





\subsection*{Acknowledgements}
\addtocontents{toc}{\vspace{1em}}
\addcontentsline{toc}{section}{Acknowledgements}
The authors thank Diederik van Engelenburg, Nathan de Montgolfier, Sébastien Ott, and Avelio Sepúlveda for useful discussions and comments on the manuscript,
as well as Christophe Garban for suggesting to incorporate the square well model in the analysis.
This research was supported by the French National Research Agency (ANR), project number \texttt{ANR-23-CPJ1-0150-01}.
This research was supported in part by the International Centre for Theoretical Sciences (ICTS) for participating in the programme \emph{Geometric methods in Percolation and Spin Models 2026} (code: \texttt{ICTS/GPSM2026/03}).

\printbibliography
\end{document}

%% file: preamble/final.tex
\documentclass[dvipsnames,11pt,reqno,twoside,final]{amsart}
\usepackage[a4paper,left=3cm,right=3cm,top=3cm,bottom=3cm]{geometry}
\input{preamble/main}

%% file: preamble/main.tex
\usepackage[T1]{fontenc}
\usepackage[utf8]{inputenc}
\usepackage{xcolor}
\usepackage{amssymb,mathtools}  
\usepackage{dsfont}
\usepackage{enumitem}
\usepackage{subcaption}
\usepackage{booktabs}
\usepackage{algpseudocode}
\usepackage{centernot}
\input{preamble/displays}
\input{preamble/environments}
\input{preamble/typography}
\input{preamble/references}
\input{preamble/todos}

\input{preamble/symbols}

%% file: preamble/displays.tex
\usepackage{preamble/mhequ}

%% file: preamble/environments.tex
\usepackage{amsthm}

\newcounter{counterEnvDefault}
\numberwithin{counterEnvDefault}{section}

\theoremstyle{plain}

\newtheorem{lemma}[counterEnvDefault]{Lemma}
\newtheorem*{lemma*}{Lemma}

\newtheorem{theorem}[counterEnvDefault]{Theorem}
\newtheorem*{theorem*}{Theorem}

\newtheorem{proposition}[counterEnvDefault]{Proposition}
\newtheorem*{proposition*}{Proposition}

\newtheorem*{corollary*}{Corollary}

\newtheorem*{assumption*}{Assumption}

\newtheorem*{example*}{Example}
\theoremstyle{definition}

\newtheorem*{exercise*}{Exercise}

\newtheorem{definition}[counterEnvDefault]{Definition}
\newtheorem*{definition*}{Definition}

\newtheorem{remark}[counterEnvDefault]{Remark}
\newtheorem*{remark*}{Remark}

\newtheorem*{claim*}{Claim}

\newtheorem*{assertion*}{Assertion}

%% file: preamble/typography.tex
\usepackage{microtype}
\renewcommand\phi\varphi
\renewcommand\epsilon\varepsilon

%% file: preamble/references.tex
\usepackage{hyperref}
\definecolor{colorlinks}{RGB}{0, 24, 168}
\definecolor{colorcites}{RGB}{124, 10, 2}
\hypersetup{
    colorlinks=true,
    linkcolor=colorlinks,
    citecolor=colorcites,
    urlcolor=colorlinks,
    pdfborder={0 0 0}
}

%% file: preamble/todos.tex
\usepackage{xargs}
\usepackage[colorinlistoftodos,prependcaption,textsize=tiny]{todonotes}

\newcommandx\work[2][1=]{\todo[linecolor=RoyalBlue,backgroundcolor=RoyalBlue!25,bordercolor=RoyalBlue,#1]{\textsc{todo} #2}}
\newcommandx\comment[2][1=]{\todo[linecolor=OliveGreen,backgroundcolor=OliveGreen!25,bordercolor=OliveGreen,#1]{\textsc{comment} #2}}
\newcommandx\mistake[2][1=]{\todo[linecolor=red,backgroundcolor=red!25,bordercolor=red,#1]{\textsc{mistake} #2}}
\newcommandx\improve[2][1=]{\todo[linecolor=orange,backgroundcolor=orange!25,bordercolor=orange,#1]{\textsc{improve} #2}}
\newcommandx\change[2][1=]{\todo[linecolor=yellow,backgroundcolor=yellow!25,bordercolor=yellow,#1]{\textsc{change} #2}}
\newcommandx\mem[2][1=]{\todo[linecolor=orange,backgroundcolor=orange!25,bordercolor=orange,#1]{\textsc{mem} #2}}
\newcommandx\status[2][1=]{\todo[linecolor=Blue,backgroundcolor=Blue!25,bordercolor=Blue,#1]{\textsc{Status} #2}}

%% file: preamble/symbols.tex
\newcommand\blank{\,\cdot\,}

\newcommand{\eps}{\epsilon}
\newcommand{\om}{\omega}

\newcommand\XY{{\operatorname{XY}}}

\newcommand{\bfz}{\mathbf{0}}

\newcommand\ind[1]{\mathds{1}_{#1}}

\newcommand\diffi{{\,\mathrm{d}}}
\newcommand\diff{{\mathrm{d}}}
\newcommand\e{\mathrm{e}}

\newcommand\C{\mathbb C}

\newcommand\E{\mathbb E}

\newcommand\N{\mathbb N}

\renewcommand\P{\mathbb P}

\newcommand\R{\mathbb R}
\renewcommand\S{\mathbb S}

\newcommand\Z{\mathbb Z}

\newcommand\calC{\mathcal C}

\newcommand\calE{\mathcal E}
\newcommand\calF{\mathcal F}

\newcommand\calL{\mathcal L}
\newcommand\calM{\mathcal M}
\newcommand\calN{\mathcal N}

\newcommand\calT{\mathcal T}

%% file: sections_new/intro/main.tex
\input{sections_new/intro/01_preface.tex}

\input{sections_new/intro/02_results.tex}

\input{sections_new/intro/03_organisation.tex}

%% file: sections_new/intro/01_preface.tex
\section{Preface}

\subsection{Classifying phase transitions by regularity}
The idea of classifying phase transitions of lattice models according to the regularity of their free energy was introduced by Paul Ehrenfest in 1933~\cite{ehrenfest}.
He predicted that a generic thermodynamical quantity --- the \emph{free energy} --- considered as a function of the parameters of the system (often the \emph{temperature}) should be analytic, except at special \emph{transition points}.
At those points a \emph{phase transition} occurs,
the nature of which can be further investigated by studying the regularity of the free energy.
In Ehrenfest's terminology, the transition is said to be of \emph{$k$-th order} at some point $\beta_c$
if the free energy at that point is $C^{k-1}$ but not $C^k$. 
The determination of the order of phase transitions is of deep theoretical interest, and has been thoroughly discussed since the systematic study of statistical mechanics.

\subsection{Probabilistic perspectives}
Phase transitions can also be characterised in terms of qualitative changes in the probabilistic behaviour
of the model when the inverse temperature passes certain threshold values
(spontaneous magnetisation in the Ising model, the appearance of an infinite cluster in percolation theory, etc.).
The probabilistic viewpoint provides a welcome testing ground for mathematicians for the analysis of phase transitions,
and those probabilistic questions can be investigated using tools coming from several fields
(combinatorics, complex analysis, measure theory, algebra, and others).
This also gives rise to an integrated approach:
analysing the relation between the regularity of the free energy (or other thermodynamical quantities) and the probabilistic behaviour at and around criticality.

\subsection{The BKT transition}
Berezinskii~\cite{Berezinskii_1971_DestructionLongrangeOrder,Berezinskii_1972_DestructionLongrangeOrder} and later Kosterlitz and Thouless~\cite{KosterlitzThouless_1973_OrderingMetastabilityPhase} conjectured
a new type of phase transition in two-dimensional media with a continuous symmetry.
This phase transition is driven by the changing behaviour of so-called \emph{topological defects},
and a key feature of the prediction in~\cite{KosterlitzThouless_1973_OrderingMetastabilityPhase} is that the free energy
is smooth (but not analytic) at the critical point (\emph{infinite order} in Ehrenfest's terminology).
This transition is now called the Berezinskii--Kosterlitz--Thouless (BKT) transition.
While this prediction is widely expected to be correct,
a rigorous analysis of the near-critical window is still missing due to the lack of
precise tools that apply in the nonperturbative regime.
The 2D XY model is not expected to be integrable, and while progress is being made
on its probabilistic analysis, the lack of a hard quantative input makes it difficult to obtain a complete picture at the moment.

Let us compare this to recent developments in the context of the 2D random-cluster model
with cluster weight $q\geq 1$, and the associated six-vertex model.
By using the Bethe Ansatz (a manifestation of integrability),
it has been proved that the phase transition is continuous for $q\in[1,4]$~\cite{DCSidoraviciusTassionContinuity}
and discontinuous for $q>4$~\cite{DCgagnebinHarelManolescuDiscontinuity}.
This is closely related to the behaviour of the free energy
of the model as a function of $q$ and of the slope in the six-vertex model.
We now understand the behaviour for small slopes for all $q>0$~\cite{Duminil-CopinKozlowskiKrachun_2022_SixVertexModelsFree}.
This implies that the \emph{mass} of the model (the inverse of the correlation length, which is zero for $q\leq 4$)
is analytic on $(4,\infty)$ and
drops smoothly to zero near $q=4$.
Similarly, the explicit expressions in~\cite{Duminil-CopinKozlowskiKrachun_2022_SixVertexModelsFree} reveal that the free energy of the model is smooth in $q$ and analytic everywhere except at $q=4$.
These behaviours are reminiscent of the BKT transition, but
we cannot at the moment answer these questions for the XY model
with similar methods because exact integrability (leading to explicit expressions for the free energy) appears not to be available.

\subsection{Analyticity away from criticality}
In the absence of integrability, this article aims to progress on the probabilistic analysis of the XY model
in the \emph{subcritical} regime (in the open interval $[0,\beta_c)$).
It derives analyticity of the free energy in this interval.
As a by-product, we also obtain analyticity of the free energy
of the closely related 2D lattice Coulomb gas on the same subcritical
interval,
and we establish that the interactions between the Coulomb particles
decay exponentially fast in the distance in the same temperature regime
(Debye screening).

While the question of analyticity throughout the entire subcritical regime may appear simple at first sight, it is not uncommon that subtle complications arise.
In the tradition of deriving analyticity,
let us first mention the remarkable and celebrated work of Lee and Yang~\cite{Lee_yang_1,Lee_yang_2} regarding the analyticity of the free energy as a function of the magnetic field, which is the first successful non-perturbative approach of this type of question.
Georgakopoulos and Panagiotis proved in~\cite{PG_analyticity} that the density of the infinite cluster
in Bernoulli percolation is analytic in the whole supercritical regime,
providing a first example of a non-perturbative proof
of analyticity in a percolation model using some form of renormalisation or coarse-graining argument combined with involved combinatorial constructions.
We also mention the work~\cite{panagiotis_severo_analyticity_GFF}, that establishes a similar result in the case of a related model, the \emph{Gaussian free field percolation}.  
In the Ising model, the most complete picture has been obtained by Ott, who proved in~\cite{Ott_Analyticity_Ising,Ott_Analyticity_RCM} that the free energy of the Ising model is analytic in $\beta$ everywhere but at $\beta_c$.
In perturbative setting, the most complete picture was described in the context of general Gibbs measures in~\cite{analyticity_pressure_general}.

Finally, we mention that the correspondence between Ehrenfest's notion of phase transition and the probabilistic viewpoint was questioned by Griffiths, who discovered in~\cite{griffiths_sing} the very intriguing phenomenon that now bears the name of \emph{Griffiths singularities}. In the setting of \emph{diluted magnets} (Ising models with random Hamiltonians), he showed that the analyticity of the free energy of the model could be broken \emph{away of the critical point}, due to the random fluctuations of the Hamiltonian. 
In the absence of those random defects (on so-called \emph{pure} models), this phenomenon should however not exist, and Ehrenfest's correspondance is expected to be exact. 

\subsection{Background on the XY model and the discrete 2D Coulomb gas}

The critical behaviour of the XY model depends starkly on the dimension of the underlying lattice.
In dimension two, it has been known since the 1960s that the XY model does not undergo a magnetisation transition (like the Ising model) due to the celebrated work of Mermin and Wagner~\cite{mermin_wagner}.
At the same time, numerical simulations opened up the idea that the model might still undergo a phase transition
of a different type~\cite{StanleyKaplan_1966_PossibilityPhaseTransition}.
As mentioned before, such a phase transition was theoretically motivated by Berezinskii~\cite{Berezinskii_1971_DestructionLongrangeOrder,Berezinskii_1972_DestructionLongrangeOrder} and Kosterlitz and Thouless~\cite{KosterlitzThouless_1973_OrderingMetastabilityPhase}.
At and below the critical temperature, it is physically expected that the model
has a conformally invariant scaling limit~\cite{Ginsparg_1988_Curiosities1}.

While a clear picture of the near-critical behaviour of the 2D XY model remains elusive to mathematicians,
the model has been the subject of intense research since the 1980s.
In their landmark work, Fröhlich and Spencer~\cite{FrohlichSpencer_1981_KosterlitzThoulessTransitionTwodimensional} (cf.~\cite{KharashPeled_2017_FrohlichSpencerProofBerezinskiiKosterlitzThouless}) proved that the low-temperature XY model
exhibits polynomial decay of correlations, thus rigorously proving that a phase transition occurs
(exponential decay at high temperature is classical).
The authors obtain this result by viewing the dual of the XY model (an \emph{integer-valued height function})
as a perturbation around the \emph{discrete Gaussian free field} (a \emph{real-valued height function}), and by using a renormalisation group analysis to control the effect of the perturbation.
The argument is robust; for example, it was shown that the argument is stable under certain perturbations~\cite{GarbanSepulveda_2023_StatisticalReconstructionGFF},
and that it can be extended to a larger range of potentials, even at a slope~\cite{OttSchweiger_2025_QuantitativeDelocalizationSolidonsolid}.
On the height function side, it was recently proved (for a slightly different potential)
that the height function converges to the Gaussian free field in the scaling limit at high temperature~\cite{BauerschmidtParkRodriguez_2024_DiscreteGaussianModel,BauerschmidtParkRodriguez_2024_DiscreteGaussianModela}.
We finally refer to~\cite{GarbanSepulveda_2023_QuantitativeBoundsVortex} for an analysis of the relations
between the Villain (modified XY) model, the Coulomb gas,
and the Gaussian free field,
and the way that correlations in the different model relate to one another.

Later, a second argument for a phase transition appeared,
based on the non-coexistence theorem in percolation (see~\cite[Theorem 9.3.1]{Sheffield_2005_RandomSurfaces}
and~\cite[Theorem 1.5]{Duminil-CopinRaoufiTassion_2019_SharpPhaseTransition}).
In~\cite{Lammers_2022_HeightFunctionDelocalisation} these ideas were used to show that
2D integer-valued height functions delocalise at high temperature,
and~\cite{EngelenburgLis_2023_ElementaryProofPhase,AizenmanHarelPeled_2022_DepinningIntegerrestrictedGaussian}
derives from this the polynomial decay of correlations in the low-temperature 2D XY model.
Further results were obtained in~\cite{Lammers_2023_DichotomyTheoryHeight,Lammers_2023_BijectingBKTTransition},
where it was proved that the BKT transition in the XY model transition is equivalent to the
delocalisation transition in the dual height function.
The duality transform was further explored in~\cite{EngelenburgLis_2025_DualityHeightFunctions}
to obtain GFF-type estimates and monotonicity in parameters in large generality.

The $\Z^2$ Coulomb gas alluded to above has a continuum counterpart, in which the Coulomb particles
live in $\R^2$ rather than $\Z^2$.
This complicates certain aspects of the model (for example,
it is problematic when particles of opposing charge come very close to each other).
Boursier and Serfaty~\cite{BoursierSerfaty_2024_DipoleFormationTwoComponent,BoursierSerfaty_2025_MultipoleBerezinskiiKosterlitzThoulessTransitions} recently obtained improved
electrostatic estimates for the continuum Coulomb gas (cf.~\cite{FrohlichSpencer_1981_KosterlitzThoulessTransitionTwodimensional}),
which allows for the derivation of a sequence of multipole transitions at temperatures accumulating
above the BKT temperature.
These multipoles are connected to transitions in the coefficients of a Mayer series arising in the free energy expansion. 

In dimension $d\geq 3$, the XY model is known to exhibit spontaneous magnetisation (or continuous symmetry breaking) at low temperature~\cite{FrohlichSimonSpencer_1976_InfraredBoundsPhase}
(cf.~\cite{GarbanSpencer_2022_ContinuousSymmetryBreaking} for a new proof based on~\cite{AbbeMassoulieMontanari_2018_GroupSynchronizationGrids}).
Moreover, it is known that in dimension $d\geq 5$, the model exhibits trivial (Gaussian) behaviour at the critical point~\cite{Frohlich_1982_TrivialityLphd4Theories}.
For the Ising model, such methods were recently extended to cover the dimension $d=4$~\cite{AizenmanDuminil-Copin_2021_MarginalTrivialityScaling}
(cf.~\cite{Aizenman_1982_GeometricAnalysisF4}).
Our mathematical knowledge on the higher dimensional XY model remains limited,
with key problems (such as continuity and sharpness of the phase transition) remaining open
in any dimension $d\geq 3$.

\subsection{Background on the square well model}

Finally, we apply our techniques to the square well model.
This model has real-valued spins and a quadratic interaction term, but the values
are conditioned to lie in the interval $[-1,1]$.
It is known that this conditioning destroys long-range interactions;
the two-point function exhibits exponential decay of correlations at all temperatures thanks
to the work of McBryan and Spencer~\cite{McBryanSpencer_1977_DecayCorrelationsInSOnsymmetric}.
Furthermore, the study of this model was pursued in~\cite{frohlich_random_surfaces} in which precise asymptotics of the \emph{mass} of the model are derived when the cutoff $[\pm 1]$ is replaced by $[\pm L]$ and $L$ tends to infinity.
The model is included in the analysis as it showcases the novel aspects of our argument
in a slightly simpler setting than the one of the XY model.
We also include a new proof of the McBryan--Spencer result of exponential decay of this model,
relying on the Fortuin--Kasteleyn--Ginibre inequality~\cite{FortuinKasteleynGinibre_1971_CorrelationInequalitiesPartially} and ``absolute-value-FKG inequality''~\cite{LammersOtt_2024_DelocalisationAbsolutevalueFKGSolidonsolid}.

%% file: sections_new/intro/02_results.tex
\section{Main results}

\newcommand\SWM{\operatorname{SWM}}
\newcommand\XYM{\operatorname{XY}}
\newcommand\Coulomb{\operatorname{Coulomb}}
\newcommand\Villain{\operatorname{Villain}}
\newcommand\GFF{\operatorname{GFF}}
\newcommand\Dirichlet{{\operatorname{Dirichlet}}}
\newcommand\BKT{{\operatorname{BKT}}}
\newcommand\free{{\operatorname{free}}}
\newcommand\ivgff{{\operatorname{IV-GFF}}}

For any $d\in\Z_{\geq 1}$, 
let $\Z^d$ denote the $d$-dimensional square lattice graph endowed with nearest-neighbour connectivity (denoted $\sim$).
A \emph{domain} is a finite subset of $\Z^d$.
Let $\Lambda_n:=(-n,n)^d\cap\Z^d$.

\subsection{Free energy of the XY model}

Let $\S^1\subset\C$ denote the unit circle in the complex plane.

\begin{definition}[XY model]
  \label{def:XY}
    Fix $d\in\Z_{\geq 1}$ and $\beta\in\R_{\geq 0}$.
    For any domain $\Lambda$ and $\zeta:\Z^d\to\S^1$,
    the \emph{XY model} on $\Lambda$ with boundary condition $\zeta$
    at inverse temperature $\beta$ is the probability measure  on $\sigma\in(\S^1)^\Lambda$ given by
    \[
        \diff\mu_{\XYM,\Lambda,\beta}^\zeta(\sigma) = \frac1{Z_{\XYM,\Lambda,\beta}^\zeta} e^{-\beta H_{\XYM,\Lambda}^\zeta(\sigma)} \diff\sigma,
    \]
    where $Z_{\XYM,\Lambda,\beta}^\zeta$ is the normalising constant,
    $\diff\sigma$ the Haar measure, and
    \[
        H_{\XYM,\Lambda}^\zeta(\sigma) := \sum_{\substack{\{x,y\}\subset\Lambda\\x\sim y}} |\sigma_x-\sigma_y|^2/2 + \sum_{\substack{x\in\Lambda,\,y\in\Lambda^c\\x\sim y}} |\sigma_x-\zeta_y|^2/2.
    \]
    The \emph{critical inverse temperature} is defined as the largest $\beta_c=\beta_c(d)\in(0,\infty)$
    such that $\mu_{\XY,\Lambda_n,\beta}^{+1}[\sigma_0]$ decays exponentially fast in $n$
    for any fixed $\beta\in[0,\beta_c)$.
    Classical arguments based on correlations inequalities imply the existence of an infinite-volume Gibbs measure with $+1$ boundary conditions at inverse temperature $\beta\geq 0$, that we refer to as $\mu^{+1}_{\XY, \beta}$.
    The label $\XYM$ is omitted from notations when it is clear from the context.
\end{definition}

\begin{theorem}[Analyticity of the free energy of the XY model]
    \label{thm:analyticity_XY}
    Fix $d\in\Z_{\geq 1}$.
    Then
    \[
        \mathbf{\mathfrak{f}}_{\XY} :\R_{\geq 0}\to\R,\,\beta\mapsto \lim_{n\to\infty} \frac1{|\Lambda_n|} \log Z_{\XYM,\Lambda_n,\beta}^{+1},
    \]
    the free energy of the model, is analytic on $[0,\beta_c(d))$.
\end{theorem}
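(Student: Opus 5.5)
The plan is to deduce analyticity from a representation of the Gibbs measure as a factor of i.i.d.\ with exponentially small information clusters, as announced in the abstract. Fix $\beta_0\in[0,\beta_c)$. By exponential decay of $\mu^{+1}_{\XY,\Lambda_n,\beta}[\sigma_0]$ for $\beta$ in a neighbourhood of $\beta_0$, we first want a coupling-from-the-past construction. Concretely, we would build the infinite-volume measure $\mu^{+1}_{\XY,\beta}$ (and its finite-volume versions) as a deterministic function of i.i.d.\ randomness $(U_x)_{x\in\Z^d}$. In this construction the spin $\sigma_x$ is determined by the variables in a random \emph{information cluster} $\calC_x\ni x$, with
\[
\P\bigl[|\calC_x|\geq k\bigr]\leq C e^{-ck}
\]
uniformly in $\beta$ near $\beta_0$. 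I would obtain this from a monotone coupling of the heat-bath (Glauber) dynamics started from the extremal boundary conditions. Because of the rotation symmetry, I would apply it to a monotone representation, for instance the absolute value or a projection of the spins, in the same way as for the square well model in Part~\ref{part:SWM}. Exponential decay of the magnetisation then upgrades, via FKG-type inequalities, to exponential decay of disagreement percolation between the coupled chains. The key point is to control the \emph{volume} of the cluster rather than its diameter.

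Second, I would write the derivative of the free energy as a local expectation. We have
\[
\partial_\beta \tfrac1{|\Lambda_n|}\log Z^{+1}_{\Lambda_n,\beta}=-\tfrac1{|\Lambda_n|}\mu^{+1}_{\Lambda_n,\beta}[H_{\Lambda_n}],
\]
which converges to $-\tfrac d{1}\,\mu^{+1}_\beta[|\sigma_0-\sigma_{e_1}|^2/2]\cdot$(number of directions). It therefore suffices to prove that $\beta\mapsto\mu^{+1}_\beta[f]$ is real-analytic near $\beta_0$ for a local observable $f$. For this I would expand around $\beta_0$. The $k$-th derivative of $\mu_\beta[f]$ is a joint cumulant $\kappa(f,H,\dots,H)$, which equals a sum over $k$ edges $e_1,\dots,e_k$ of truncated correlations $\kappa(f,h_{e_1},\dots,h_{e_k})$. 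We then need the bound
\[
\sum_{e_1,\dots,e_k}|\kappa(f,h_{e_1},\dots,h_{e_k})|\leq C^{k+1}k!.
\]
Together with uniform boundedness in a neighbourhood, this gives analyticity by Cauchy--Hadamard.

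The cumulant bound is where the factor-of-i.i.d.\ structure enters. A joint cumulant of functions of i.i.d.\ variables vanishes unless the information clusters of the arguments form a connected family. The cumulant is therefore bounded by the expectation over connected "polymer" configurations of clusters covering $\{0,e_1,\dots,e_k\}$. Exponential tails on cluster volumes then give a tree-graph bound of the form $C^k k!$. The factor $k!$ comes from the cumulant's partition sum, and the spatial sums are controlled by the volume tails: the number of lattice animals of size $m$ is $e^{O(m)}$, which is beaten by $e^{-cm}$ only after suitably encoding the clusters. An alternative I would keep in reserve is a complexification route: show the finite-volume measures stay nonzero-normalised for complex $\beta$ in a small disc via a cluster expansion of the i.i.d.\ representation, then apply Vitali's theorem.

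The main obstacle is the first step: obtaining volume-exponential (not merely diameter-exponential) tails for the information clusters of the continuous-spin XY model throughout the whole subcritical interval, using only the non-perturbative input of exponential decay of $\sigma_0$. Doing this uniformly in $\beta$ near $\beta_0$ requires care, as does the lack of monotonicity for $\S^1$-valued spins. Here I would first try a coupling from the past on the $\pm$ components with a random-cluster-type representation, and dominate the disagreement clusters by a subcritical percolation.
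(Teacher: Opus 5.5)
\paragraph{The EFIID construction.} Your overall strategy, an exponential factor of i.i.d.\ first and analyticity second, is the paper's. But the step you flag as the main obstacle is where essentially all of the paper's work lies, and the mechanism you sketch for it would fail. Neither the absolute value nor a single projection of the spins gives a Markov field with a monotone heat-bath update. The paper uses the Chayes coordinate representation $\sigma_u=\xi_u\cos\alpha_u+i\zeta_u\sin\alpha_u$, $\alpha_u\in[0,\pi/2]$. There the signs are encoded by two FK--Ising percolations $\omega,\eta$, and triples $(\alpha,\omega,\eta)$ are ordered with $\alpha,\eta$ increasing and $\omega$ decreasing (Definition~\ref{def:ordering_triples}); the boundary conditions $+1$ and $+i$ are extremal. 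Your closing suggestion points in this direction.

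The more serious problem is coalescence. Even with a monotone representation, the monotone grand coupling of the heat-bath dynamic for a real-valued spin never makes the top and bottom chains agree exactly in finite time. So there is no finite disagreement cluster to percolate: exponential decay of $\mu^{+1}_{\XY,\Lambda_n,\beta}[\sigma_0]$ controls total-variation distances, not exact coalescence. The missing idea is the matching-digits dynamic. The first $k$ digits of $\alpha_u$ are sampled monotonically and the remaining ones through Lemma~\ref{lem:grand_coupling_almost_uniform}, so that with probability $1-\epsilon$ all chains receive identical fine digits. A subcritical matching tree then reduces exact coalescence to coalescence of the truncated, finite-valued field.

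This truncation is also what lets static mixing become space-time mixing. That step does not go through FKG-type inequalities but through the Harel--Spinka renormalisation inequality, which needs a finite spin space. Volume (not diameter) tails then come from three further steps:
\begin{enumerate}
\item coarse-graining space-time into good boxes;
\item dominating a highly supercritical Bernoulli percolation (Liggett--Schonmann--Stacey);
\item a Peierls argument on the closed $*$-cluster of the origin.
\end{enumerate}
The information set is the spatial projection of the region enclosed by a shielding surface of good boxes. Finally, the update at $u$ depends on the $\omega$- and $\eta$-connectivity of its neighbours, so the dynamic is only almost Markov. Good boxes must therefore also forbid $\omega$- and $\eta$-crossings, an issue your plan does not address.

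\paragraph{The analytic step.} Your main route needs cluster tails uniform in $\beta$ near $\beta_0$, together with $\sum_{e_1,\dots,e_k}|\kappa(f,h_{e_1},\dots,h_{e_k})|\le C^{k+1}k!$. That bound does not follow from saying cumulants vanish unless the clusters form a connected family. The clusters are random and mutually dependent, so after decomposing over deterministic supports you face joint probabilities of overlapping clusters that do not factorise. As you note yourself, a small rate $c$ need not beat lattice-animal entropy.

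The paper avoids both problems. It fixes $\beta$, uses only that $\mu^{+1}_{\XY,\beta}$ is an EFIID, and tilts it. With $F_A(\sigma)=\sum_i|\sigma_0-\sigma_{e_i}|^2/2$ one has $\lim_n\frac1{|\Lambda_n|}\log\mu^{+1}_{\XY,\beta}\bigl[\exp\bigl(-\epsilon\sum_{x\in\Lambda_n}F_A(\tau_x\sigma)\bigr)\bigr]=\mathfrak f_{\XY}(\beta+\epsilon)-\mathfrak f_{\XY}(\beta)$. Ott's coarse-graining, dependency-encoding measure and cluster expansion then give a uniformly bounded analytic extension in complex $\epsilon$, and Vitali's theorem concludes (Theorem~\ref{thm:EFIID_implies_analyticity}). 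This is your reserve route, and it is the one to use, since it needs no uniformity in $\beta$.
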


\subsection{Free energy of the discrete two-dimensional Coulomb gas}

Now consider $d=2$.
Recall that $\Lambda_n=(-n,n)^2\cap\Z^2$ is the base graph for the XY model.
We also use it as the base graph for the Villain model, which is a slight modification of the XY model
(described formally in Section~\ref{sec:coulomb}).
The Coulomb gas is defined on the faces $F(\Lambda_n)$ of $\Lambda_n$,
that is, the unit squares with vertices in $[-n,n]^2\cap\Z^2$.
We think of $F(\Lambda_n)$ as a graph; two faces are adjacent if they share an edge
(faces in the bulk have four neighbors; edges on the boundary two or three).
We let $\Delta$ denote the Laplacian on this graph of faces, and $\Delta^{-1}$ its inverse.

\begin{definition}[Two-dimensional Coulomb gas]
  \label{def:Coulomb}
    Fix $d=2$ and $\beta\in\R_{\geq 0}$.
    The \emph{two-dimensional lattice Coulomb gas} on $\Lambda_n$ with free boundary conditions at inverse temperature $\beta$
    is the probability measure on $q\in\Z^{F(\Lambda_n)}$
    given by
    \begin{equation}
      \label{eq:Coulomb_gas_definition}
        \mu_{\Coulomb,\Lambda_n,\beta}^\free[\{q=Q\}] = \frac{1}{Z_{\Coulomb,\Lambda_n,\beta}^\free} \mathbf{1}\Big[\sum\nolimits_{f\in F(\Lambda_n)}Q(f)=0\Big]\cdot e^{-\frac{\beta(2\pi)^2}{2} \langle Q , -\Delta^{-1} Q \rangle}.
    \end{equation}
    Notice that, contrary to the XY model,
    the Hamiltonian of the Coulomb gas is non-local due to the presence of the inverse Laplacian.
\end{definition}

Let us now briefly describe the relation of this model with the Villain model.
The Villain model is an XY model on $\Lambda_n$ with a modified Hamiltonian.
It is closely related to the Coulomb gas, as it may be viewed as the product measure of the Coulomb gas and a Gaussian free field (GFF) on $\Lambda_n$.
This relation was described in~\cite{GarbanSepulveda_2023_QuantitativeBoundsVortex}.
Direct consequences are:
\begin{itemize}
    \item For any fixed $\beta$, we have
    \begin{equation}
        \label{eq:Villain_Coulomb_relation}
        Z_{\Villain,\Lambda_n,\beta}^{+1} = Z_{\Coulomb,\Lambda_n,\beta}^\free \cdot Z_{\GFF,\Lambda_n,\beta}^0,
    \end{equation}
    see \cite[Page~672, Remark~10]{GarbanSepulveda_2023_QuantitativeBoundsVortex},
    \item For any fixed $\beta$,
    one can take a sample from $\mu_{\Coulomb,\Lambda_n,\beta}^\free$
    via the following procedure (detailed in~\cite[Section~4]{GarbanSepulveda_2023_QuantitativeBoundsVortex}):
    \begin{enumerate}
        \item Sample $\sigma\sim\mu_{\Villain,\Lambda_n,\beta}^{+1}$,
        \item Sample additional i.i.d.\ random variables $\zeta$ on the edges of $\Lambda_n$,
        \item The charge $q_f$ at any face $f\in F(\Lambda_n)$ is now a function of the spins $\sigma_u$
        on the four corners $u$ of $f$ and $\zeta_e$ on the four edges $e$ of $f$.
    \end{enumerate}
    In particular, the Coulomb gas can be sampled locally in terms of the Villain model.
\end{itemize}
We stress that the relation between the Coulomb gas and the Villain model is \emph{not} a Fourier type transform:
there is no temperature inversion. Rather, the Coulomb gas is defined on the dual graph simply because it encodes topological features along the faces of the primal graph on which the Villain model is defined.

Like the XY model, the Villain model in 2D has a special inverse temperature $\beta_{\BKT}$
such that correlations decay exponentially fast for $\beta<\beta_{\BKT}(\Villain)$ and polynomially fast for $\beta\geq\beta_{\BKT}(\Villain)$.
As discussed in~\cite{Berezinskii_1972_DestructionLongrangeOrder,FrohlichSpencer_1981_KosterlitzThoulessTransitionTwodimensional,GarbanSepulveda_2023_QuantitativeBoundsVortex},
this phase transition is driven by qualitative changes in the Coulomb gas measures $\mu_{\Coulomb,\Lambda_n,\beta}^\free$.
This must necessarily be the case as the Gaussian free field does not undergo any phase transition;
changing $\beta$ simply scales the field by a constant factor.
As Theorem~\ref{thm:analyticity_XY} extends directly to the Villain model,
we obtain the following results for the Coulomb gas.

\begin{theorem}
  \label{thm:Coulomb_analytic}
  Fix $d=2$.
  Then the free energy of the Coulomb gas
  \[
    \mathbf{\mathfrak{f}}_{\mathrm{Coulomb}} :\R_{\geq 0}\to\R,\,\beta\mapsto \lim_{n\to\infty} \frac1{|\Lambda_n|} \log Z_{\Coulomb,\Lambda_n,\beta}^\free,
   \]
   is analytic on $[0,\beta_{\BKT}(\Villain))$.

   The same holds true if we replace free boundary conditions by Dirichlet boundary conditions,
   see~\cite{GarbanSepulveda_2023_QuantitativeBoundsVortex} for details.
\end{theorem}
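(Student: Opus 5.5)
The plan is to read off the analyticity of $\mathfrak f_{\Coulomb}$ from the partition function identity~\eqref{eq:Villain_Coulomb_relation}. Taking logarithms and dividing by $|\Lambda_n|$ gives
\[
  \frac{1}{|\Lambda_n|}\log Z_{\Coulomb,\Lambda_n,\beta}^\free
  = \frac{1}{|\Lambda_n|}\log Z_{\Villain,\Lambda_n,\beta}^{+1}
  - \frac{1}{|\Lambda_n|}\log Z_{\GFF,\Lambda_n,\beta}^{0}.
\]
It therefore suffices to show two things. First, the Villain free energy exists and is analytic on $[0,\beta_{\BKT}(\Villain))$. Second, the GFF free energy exists and is analytic on all of $\R_{>0}$, with an appropriate interpretation at $\beta=0$, discussed below.

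For the Villain term, I will use the version of Theorem~\ref{thm:analyticity_XY} for the Villain model. The text asserts that this theorem extends directly, and I expect the XY part to prove it for a general class of single-site-compact nearest-neighbour models covering the Villain interaction. That argument presents the Gibbs measure as a factor of i.i.d. whose information clusters have exponentially decaying volume, for all $\beta$ in the subcritical regime. For Villain this regime is $\beta<\beta_{\BKT}(\Villain)$ by definition of $\beta_{\BKT}$. The main thing to check here is that the inputs used for XY remain valid for the Villain Hamiltonian:
\begin{itemize}
  \item the monotonicity and correlation inequalities, which hold because the Villain weight is a positive-definite class function;
  \item the existence of the $+1$ infinite-volume limit;
  \item the complex-perturbation control of the edge weights near a real $\beta$, so that $\partial_\beta\log Z$ becomes a sum of local observables with summable cluster expansion.
\end{itemize}

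For the GFF term, the Dirichlet GFF partition function on $\Lambda_n$ is Gaussian:
\[
  Z_{\GFF,\Lambda_n,\beta}^0 = (2\pi/\beta)^{|\Lambda_n|/2}\det(-\Delta_{\Lambda_n})^{-1/2},
\]
up to the paper's normalisation conventions. Hence its free energy equals $\tfrac12\log(2\pi/\beta)$ minus $\tfrac12\lim_n |\Lambda_n|^{-1}\log\det(-\Delta_{\Lambda_n})$, and the latter is a $\beta$-independent constant given by the usual spanning-tree integral. This is analytic on $(0,\infty)$, and the difference of the two limits then gives existence and analyticity of $\mathfrak f_{\Coulomb}$ on $(0,\beta_{\BKT})$.

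The point $\beta=0$ needs separate treatment, since there $\log\beta$ is singular while the Coulomb gas is trivial. The Villain measure itself is singular at $\beta=0$ in this parametrisation, and the two singularities must cancel; this is where I expect the main obstacle. My first attempt would be to work directly with the Coulomb representation near $\beta=0$. At small $\beta$ the weight $e^{-\beta(2\pi)^2\langle Q,-\Delta^{-1}Q\rangle/2}$ on the neutral sector is no longer a small perturbation, because the counting measure on $\Z^{F}$ without decay is not normalisable. One must therefore interpret the statement via the analytic continuation of the identity, or use the sine-Gordon/Poisson summation form, which converges for small $\beta$.

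Concretely, I would rewrite $Z_{\Villain}/Z_{\GFF}$ as the GFF expectation of a product of periodic local weights $\prod_e \sum_m e^{-\beta(\nabla\phi_e-2\pi m)^2/2}/e^{-\beta\nabla\phi_e^2/2}$. After Poisson summation in each edge, this becomes a product of local factors, each analytic in $\beta$ (in fact in $\beta^{-1}$) near the boundary regime. I would then treat the endpoint by this local rewriting. If that fails, I would restrict the claim to the open interval and the point $\beta=0$ separately via continuity.

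Finally, the Dirichlet boundary condition version follows the same way. I would replace~\eqref{eq:Villain_Coulomb_relation} by its Dirichlet analogue from~\cite{GarbanSepulveda_2023_QuantitativeBoundsVortex}, which pairs the Dirichlet Coulomb gas with the Villain model under free or appropriate boundary conditions. Since free energies are insensitive to boundary conditions (boundary terms are $O(n^{d-1})$), the same limit and the same analyticity follow.
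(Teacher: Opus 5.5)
\textbf{Same route as the paper on the open interval.} On $(0,\beta_{\BKT}(\Villain))$ your argument is exactly the paper's. You take logarithms in \eqref{eq:Villain_Coulomb_relation} and use analyticity of the Villain free energy (Theorem~\ref{thm:VILLAIN_ANALYTICITY}, which the paper derives from the EFIID property of $\mu^{+1}_{\Villain,\beta}$). You subtract the explicit Gaussian free energy, and you pass to Dirichlet boundary conditions because they change $\log Z$ by only $O(|\partial\Lambda_n|)$. That part is fine.

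\textbf{The endpoint $\beta=0$.} Here you expect the singularities of the Villain and GFF terms to cancel. They do not.

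Poisson summation gives
$p_\beta(z,z')=(2\pi\beta)^{-1/2}\bigl(1+2\sum_{k\geq 1}e^{-k^2/(2\beta)}\cos(k(\arg z-\arg z'))\bigr)$.
With asymptotically two edges per vertex, this gives $\mathfrak f_{\Villain}(\beta)=-\log\beta+O(1)$ as $\beta\downarrow 0$. So the Villain input itself only holds on the open interval with this normalisation of $p_\beta$. Meanwhile $\mathfrak f_{\GFF}(\beta)=-\tfrac12\log\beta+O(1)$. Hence $\mathfrak f_{\Coulomb}(\beta)=-\tfrac12\log\beta+O(1)\to+\infty$.

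You can also see this directly from \eqref{eq:Coulomb_gas_definition}. At $\beta=0$ the weight is counting measure on the infinitely many neutral configurations, so $Z^{\free}_{\Coulomb,\Lambda_n,0}=\infty$. Poisson summation over the neutral lattice, which has rank $|F(\Lambda_n)|-1$, gives $\log Z^{\free}_{\Coulomb,\Lambda_n,\beta}=-\tfrac{|F(\Lambda_n)|-1}{2}\log\beta+O(|\Lambda_n|)$, uniformly in $\beta\in(0,1]$.

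So neither a sine-Gordon rewriting nor a continuity argument can bring $\beta=0$ into the domain of analyticity. Your GFF-expectation formula also diverges as written: its integrand is $2\pi$-periodic in each $\phi_x$, and restricting to a fundamental domain just returns \eqref{eq:Villain_Coulomb_relation}. The endpoint has to be dropped, not treated.

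The paper's one-line proof has the same defect: it invokes analyticity of the GFF free energy on $[0,\infty)$, which fails at $0$. What both arguments actually prove is analyticity of $\mathfrak f_{\Coulomb}$ on the open interval $(0,\beta_{\BKT}(\Villain))$; at $\beta=0$ the free energy is not even finite.
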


This theorem follows directly from Equation~\eqref{eq:Villain_Coulomb_relation}, analyticity of the free energy of the Villain model on
$[0,\beta_{\BKT}(\Villain))$,
and analyticity of the free energy of the Gaussian free field on $[0,\infty)$.
Changing the boundary conditions (Dirichlet vs.\ free) modifies the partition function of $\log Z_{\Coulomb,\Lambda_n,\beta}$ by $O(|\partial\Lambda_n|)=o(|\Lambda_n|)$, as can be observed from the relation~\eqref{eq:Villain_Coulomb_relation}, and therefore the theorem extends to Dirichlet boundary conditions.

\subsection{Free energy of the integer-valued Gaussian free field (IV-GFF)}

For this discussion and this result, we also stay in two dimensions, that is we set the base graph to be $\Z^2$ endowed with its nearest-neighbour structure. 
The IV-GFF is a model of \emph{height function} on the vertices of $\Z^2$, known to be the dual height function of the Villain model. 

\begin{definition}[$\ivgff$]
\label{def:ivgff}
Fix $d=2$ and $\beta \in \R_{>0}$. For any domain $\Lambda$ and any $\zeta: \Z^2 \mapsto \Z$, the \emph{two-dimensional lattice integer-valued Gaussian free field} (2D IV-GFF) --- also known as \emph{discrete Gaussian model} --- on $\Lambda$ with boundary conditions $\zeta$ at inverse temperature $\beta$ is the probability measure on $\phi: \Z^2 \mapsto \Z$ given by 
\[ \mu^\zeta_{\ivgff, \Lambda, \beta} [\phi =f ]  = \frac{1}{Z_{\ivgff, \Lambda, \beta}^\zeta} \e^{- \beta H^\zeta_{\ivgff, \Lambda}(f)}, \]
where $Z^\zeta_{\ivgff, \Lambda, \beta}$ is the normalising constant and 
\[ H^\zeta_{\ivgff, \Lambda}(f) = \sum_{\substack{\{x,y\}\subset \Lambda \\ x \sim y}}(f_x - f_y)^2 + \sum_{\substack{x \in \Lambda, y \in \Lambda^c \\ x \sim y}}(f_x - \zeta_y)^2.  \]
\end{definition}

\begin{theorem}
  \label{thm:ivgff_analytic}
  Fix $d=2$.
  Then the free energy of the $\ivgff$
  \[
    \mathbf{\mathfrak{f}}_{\ivgff} :\R_{\geq 0}\to\R,\,\beta\mapsto \lim_{n\to\infty} \frac1{|\Lambda_n|} \log Z_{\ivgff,\Lambda_n,\beta^{-1}}^0,
   \]
   is analytic on $[0,\beta_{\BKT}(\Villain))$.

   The same holds true if we replace 0 boundary conditions by free boundary conditions.
\end{theorem}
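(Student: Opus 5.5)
The plan is to deduce Theorem~\ref{thm:ivgff_analytic} from the analyticity of the Villain free energy on $[0,\beta_{\BKT}(\Villain))$, which is the Villain version of Theorem~\ref{thm:analyticity_XY}. The link is the exact duality between the Villain model and the integer-valued Gaussian free field at inverse temperature $\beta^{-1}$, up to normalising constants. I first write the Villain partition function with $+1$ boundary conditions on $\Lambda_n$ as a product over edges of periodised Gaussian weights $\sum_{m\in\Z} e^{-\frac\beta2(\theta_x-\theta_y+2\pi m)^2}$. Applying Poisson summation edgewise replaces each such weight by $(2\pi\beta)^{-1/2}\sum_{k\in\Z} e^{-k^2/(2\beta)} e^{ik(\theta_x-\theta_y)}$. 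Integrating out the angles forces the integer edge currents $k$ to be divergence free.

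Next I use planar duality: divergence-free integer currents on $\Lambda_n$ (wired through the $+1$ boundary) are gradients of integer height functions on the faces, i.e.\ on the dual box. The boundary condition and normalisation of the height function must be chosen to match either $0$ or free boundary conditions. This yields an identity of the form
\[
Z_{\Villain,\Lambda_n,\beta}^{+1} = C_\beta^{|E(\Lambda_n)|}\,(2\pi)^{a_n}\, Z_{\ivgff,\Lambda_n',c\beta^{-1}}^{0},
\]
where $C_\beta=(2\pi\beta)^{-1/2}$ and $c$ is an explicit constant coming from the factor $1/2$ in the Hamiltonian of Definition~\ref{def:ivgff}. Here $\Lambda_n'$ is the dual box, which differs from $\Lambda_n$ by $O(|\partial\Lambda_n|)$ vertices. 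Possibly a boundary height is fixed, and the remaining boundary discrepancies only contribute $e^{O(n)}$ factors. The constant $c$ fixes the precise temperature reparametrisation in the statement, and I will track it so that the IV-GFF parameter in the theorem corresponds to Villain $\beta$.

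Taking $\frac1{|\Lambda_n|}\log$ and letting $n\to\infty$, the boundary terms vanish. This gives $\mathfrak f_{\ivgff}(\beta)=\mathfrak f_{\Villain}(\beta)+\tfrac{|E|}{|V|}\log C_\beta+\mathrm{const}$, with $|E|/|V|\to 2$. The function $\beta\mapsto\log C_\beta$ is analytic on $(0,\infty)$, so analyticity transfers directly on $(0,\beta_{\BKT})$. At $\beta=0$ the statement should be read with the natural continuous extension, which I will check separately. For free boundary conditions, the two partition functions differ from the $0$ one by at most $e^{O(|\partial\Lambda_n|)}$ factors after pinning one height. One way to see this is a direct comparison: sum over boundary values, using that the Gaussian weights make large boundary gradients negligible. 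The limit is therefore unchanged.

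The main obstacle is the bookkeeping in the duality at the boundary. The $+1$ Villain boundary condition must produce exactly the $0$ (or wired) IV-GFF boundary on the dual box, and all discrepancies must be of order $|\partial\Lambda_n|$ rather than volume order. I would first try the torus-free version: identify all exterior vertices of $\Lambda_n$ into one wired vertex, which makes the dual exactly the face graph with $0$ boundary height. The rest is routine.
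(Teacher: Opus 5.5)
Your route differs from the paper's. The paper does not dualise directly. It quotes from \cite{GarbanSepulveda_2023_QuantitativeBoundsVortex} two factorisations:
\begin{itemize}
\item $Z^0_{\ivgff,\Lambda_n,\beta^{-1}}=Z^0_{\GFF,\Lambda_n,\beta^{-1}}Z^{\free}_{\Coulomb,\Lambda_n,\beta}$,
\item $Z^{+1}_{\Villain,\Lambda_n,\beta}=Z^{\free}_{\Coulomb,\Lambda_n,\beta}Z^{0}_{\GFF,\Lambda_n,\beta}$.
\end{itemize}
It then inherits the result from Theorem~\ref{thm:Coulomb_analytic}, that is, from Theorem~\ref{thm:VILLAIN_ANALYTICITY} together with analyticity of the Gaussian free energies. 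Eliminating the Coulomb gas between these two identities gives, up to Gaussian normalising constants, your duality, so the analytic input is the same. Your single Poisson summation is more self-contained and bypasses the non-local Coulomb gas. It also makes the temperature map and the boundary conditions explicit, whereas the paper leaves these to the citation. With the paper's conventions ($\frac\beta2$ in $p_\beta$, no $\frac12$ in $H_{\ivgff}$) you get $c=\frac12$. Thus $Z_{\ivgff,\Lambda,\beta^{-1}}$ is dual to Villain at $\beta/2$, which only enlarges the interval.

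Your boundary bookkeeping, however, is backwards, and the fallback you propose is not justified as stated.
\begin{itemize}
\item Wiring the exterior of $\Lambda_n$ into one \emph{primal} vertex $g$ gives a planar graph on the sphere whose faces correspond to the $(2n)^2$ unit squares with a corner in $\Lambda_n$. Its dual is the $2n\times 2n$ grid with \emph{free} boundary, and divergence-free currents are gradients of heights modulo a global constant.
\item So the $+1$ Villain model is dual to the free IV-GFF with one height pinned. The $0$-boundary IV-GFF is instead dual to the \emph{free} Villain model on $\Lambda_n$: the outer face of $\Lambda_n$ becomes the single dual boundary vertex, at height $0$.
\item Comparing free and $0$ boundary conditions on the IV-GFF side is the hard direction. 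The Gaussian weights control the sum over boundary values $\xi$, but you also need a bound on the interior partition function $Z^\xi$ that is uniform in $\xi$. With unbounded integer spins this does not follow from large boundary gradients being rare. One needs something like $Z^\xi\le Z^0$, which comes from positivity of the Fourier transform of $m\mapsto e^{-\beta^{-1}m^2}$, i.e.\ from duality again.
\end{itemize}
The clean fix is to compare on the Villain side. For $\beta>0$ the weight $p_\beta$ is bounded above and below by positive constants on $\S^1$, so $\log Z^{+1}_{\Villain,\Lambda_n,\beta}-\log Z^{\free}_{\Villain,\Lambda_n,\beta}=O(|\partial\Lambda_n|)$. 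The two exact dualities (free Villain with $0$-boundary IV-GFF, $+1$ Villain with pinned free IV-GFF) then give both assertions of the theorem.

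Finally, there is nothing to ``check separately'' at $\beta=0$. Your prefactor $\log C_\beta$ is singular there. In fact $\mathfrak f_{\ivgff}(\beta)=2e^{-4/\beta}(1+o(1))$ as $\beta\downarrow 0$, which is positive but flat, hence not analytic at $0$. Your argument, like the paper's (whose Coulomb and GFF factors also degenerate at $\beta=0$), gives analyticity only on the open interval $(0,\beta_{\BKT}(\Villain))$.
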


We invite the reader to pay attention to the unusual $\tfrac{1}{\beta}$, which is a consequence of a \emph{duality transform}. 
Indeed, it is well known that the partition function of the $\ivgff$ is related to that of the Coulomb gas and of the GFF by the following relation (see~\cite{GarbanSepulveda_2023_QuantitativeBoundsVortex}):
\[ Z^0_{\ivgff, \Lambda_n, \beta^{-1}} = Z^0_{\GFF, \Lambda_n, \beta^{-1}}Z^{\mathrm{free}}_{\mathrm{Coulomb}, \Lambda_n, \beta}. \]

As such, Theorem~\ref{thm:ivgff_analytic} is a direct consequence of Theorem~\ref{thm:Coulomb_analytic}.

\begin{remark}[Localised phase of the $\ivgff$]
Define the \emph{critical inverse temperature} of the $\ivgff$ as the maximal $\beta \geq 0$ such that the field \emph{delocalises},
that is to say that $\mu^0_{\ivgff, \Lambda_n, \beta}[|\phi_0|^2] \to \infty$ when $n \to \infty$. 
It was proved in~\cite{Lammers_2023_BijectingBKTTransition} that the critical inverse temperature of the $\ivgff$ coincides the inverse of that of the Villain model. 
Thus, Theorem~\ref{thm:ivgff_analytic} proves the analyticity of the free energy of the $\ivgff$ \emph{on its whole localised phase}. 
As in the case of the 2D Coulomb gas, this is a non-trivial consequence of our work, 
because the methods employed to prove analyticity of the free energy of the Villain and XY models would fail 
if applied directly to the $\ivgff$. 
For instance, its spins are unbounded and it admits several Gibbs measures, which are two major limitations for our method to apply directly. 
\end{remark}

\subsection{Free energy of the square well model (SWM)}

\begin{definition}[Square well model (SWM)]
  \label{def:swm}
    Fix $d\in\Z_{\geq 1}$ and $\beta\in\R_{\geq 0}$.
    For any domain $\Lambda$ and $\zeta:\Z^d\to[-1,1]$,
    the \emph{square well model} (SWM) on $\Lambda$ with boundary condition $\zeta$
    at inverse temperature $\beta$ is the probability measure  on $\alpha\in[-1,1]^\Lambda$ given by
    \[
        \diff\mu_{\SWM,\Lambda,\beta}^\zeta(\alpha) = \frac1{Z_{\SWM,\Lambda,\beta}^\zeta} e^{-\beta H_{\SWM,\Lambda}^\zeta(\alpha)} \diff\alpha,
    \]
    where $Z_{\Lambda,\beta}^\zeta$ is the normalising constant,
    $\diff\alpha$ the Lebesgue measure, and
    \[
        H_{\SWM,\Lambda}^\zeta(\alpha) := \sum_{\substack{\{x,y\}\subset\Lambda\\x\sim y}} (\alpha_x-\alpha_y)^2 + \sum_{\substack{x\in\Lambda,\,y\in\Lambda^c\\x\sim y}} (\alpha_x-\zeta_y)^2.
    \]
    The label $\SWM$ is omitted when no confusion is likely to arise.
\end{definition}

McBryan and Spencer proved in~\cite{McBryanSpencer_1977_DecayCorrelationsInSOnsymmetric} that this model exhibits exponential decay of correlations
for any fixed $d$ and $\beta$.
 Consequently there exists a unique infinite-volume Gibbs measure, referred to as $\mu_{\SWM, \beta}$
We derive here the following result.

\begin{theorem}[Analyticity of the free energy of the SWM]
    \label{thm:analyticity_SWM}
    Fix $d\in\Z_{\geq 1}$.
    Then
    \[
       \mathbf{\mathfrak{f}}_{\SWM} :\R_{\geq 0}\to\R,\,\beta\mapsto \lim_{n\to\infty} \frac1{|\Lambda_n|} \log Z_{\SWM,\Lambda_n,\beta}^0,
    \]
    the free energy of the model, is analytic.
\end{theorem}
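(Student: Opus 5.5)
We will prove analyticity by writing $\mathfrak f_{\SWM}'(\beta)$ as an infinite-volume expectation of a local observable and then extending $\beta\mapsto \mathfrak f_{\SWM}(\beta)$ to a complex neighbourhood of each $\beta_0\ge 0$ with locally uniform bounds. The route goes through the abstract's strategy: represent $\mu_{\SWM,\beta}$ as a factor of i.i.d.\ random variables whose information clusters have exponentially decaying volume. We then run a polymer (cluster) expansion on these information clusters.

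\textbf{Step 1: exponential decay via FKG.} Fix $\beta_0$. We first reprove the McBryan--Spencer decay in a form that is uniform for $\beta$ in a small interval around $\beta_0$. The main ingredients are the FKG inequality for the SWM (the Hamiltonian is submodular in $\alpha$) and the absolute-value-FKG inequality for $|\alpha|$. We will use them to show that the maximal and minimal boundary conditions $\zeta\equiv\pm1$ couple quickly. Concretely, we aim to prove
\[
\mu^{+1}_{\Lambda_n,\beta}[\alpha_0]-\mu^{-1}_{\Lambda_n,\beta}[\alpha_0]\le Ce^{-cn}.
\]
The argument goes as follows. Conditioned on $|\alpha|$, the signs form an Ising-type system. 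The absolute-value-FKG inequality ensures that $|\alpha|$ is small on a positive density of sites. At such sites the conditional sign coupling is weak, and a Peierls/percolation argument gives exponential decay of disagreement.

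\textbf{Step 2: coupling from the past.} Using the monotone heat-bath dynamics, we set up coupling from the past started simultaneously from the top and bottom configurations $\pm1$. Monotonicity lets the two extremal chains sandwich every other initial condition. The exponential decay from Step 1, combined with a standard disagreement-percolation bound, should yield the factor-of-i.i.d.\ representation. In this representation, $\alpha_0$ is a function of the i.i.d.\ update variables in a random space-time region $\mathcal C_0$ with $\P[|\mathcal C_0|\ge k]\le Ce^{-ck}$, uniformly for $\beta$ near $\beta_0$.

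We expect this to be the main obstacle. Step 1 only controls the spread of information in space, whereas here we need volume bounds on information clusters. My first attempt would be a block-renormalisation: declare a block good if coupling has occurred within it after a bounded time, use Step 1 and the spatial Markov property to show that good blocks dominate high-density Bernoulli percolation, and then bound the cluster volume by a lattice-animal count.

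\textbf{Step 3: analytic continuation.} We write $\frac{d}{d\beta}\log Z^0_{\Lambda_n,\beta}=-\mu^0_{\Lambda_n,\beta}[H]$. We then perturb $\beta\to\beta_0+z$ with $z\in\C$ small and express $Z_{\Lambda_n,\beta_0+z}/Z_{\Lambda_n,\beta_0}=\mu_{\beta_0}[e^{-zH}]$. Next we expand $e^{-z(\alpha_x-\alpha_y)^2}-1$ edge by edge, which is bounded by $4|z|e^{4|z|}$ since the spins are bounded; this boundedness is the key place where compact spins help. Each product over a set of edges is evaluated under $\mu_{\beta_0}$. Through the factor-of-i.i.d.\ representation, correlations between edge sets with disjoint information clusters factorise exactly. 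This gives a polymer model whose polymers are unions of edges glued by overlapping information clusters, with activities bounded by $(C|z|)^{|\gamma|}$ times the exponential tails from Step 2. For $|z|$ small, the Kotecký--Preiss criterion holds. Consequently $\frac1{|\Lambda_n|}\log\mu_{\beta_0}[e^{-zH}]$ is analytic in $z$ and uniformly bounded, with a limit as $n\to\infty$, and Vitali's theorem transfers analyticity to $\mathfrak f_{\SWM}$. Boundary-condition effects are $O(|\partial\Lambda_n|)$ and vanish after dividing by $|\Lambda_n|$.
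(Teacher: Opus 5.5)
There is a genuine gap in Step~2, and it is the obstacle the paper is built around. With the monotone heat-bath dynamic (the inverse-CDF coupling of the single-site conditional laws), the chains started from $\alpha^+\equiv 1$ and $\alpha^-\equiv -1$ never coincide at any site in finite time. The conditional law at $u$ is $\mathcal{N}(\frac{1}{2d}\sum_{v\sim u}\alpha_v,\frac{1}{4d\beta})$ truncated to $[-1,1]$. If the two neighbour averages differ, the two CDFs are strictly ordered on $(-1,1)$, so $F_m^{-1}(U)\neq F_{m'}^{-1}(U)$ for every $U\in(0,1)$. By induction, the top chain stays strictly above the bottom chain at every site forever. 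Consequences:
\begin{itemize}
\item no block is ever ``coupled after a bounded time'', and coupling from the past never terminates;
\item $\alpha_0$ is not measurable with respect to any finite space-time region;
\item the exact factorisation you use in Step~3 has nothing to rest on.
\end{itemize}
Disagreement percolation does not rescue this: at large $\beta$ the conditional laws given neighbours near $+1$ and near $-1$ are nearly singular, so the per-update disagreement probability is close to $1$.

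The missing idea is the paper's modified dynamic (Lemma~\ref{lem:glauber_SWM}). First sample the first $k$ decimal digits monotonically by the inverse-CDF coupling. Then resample the remaining digits from the conditional law on an interval of length $10^{-k}$; for $k$ large this law dominates $(1-\epsilon)$ times the uniform law. A Doeblin-type split (Lemma~\ref{lem:grand_coupling_almost_uniform}) then makes the fine digits identical for all configurations with probability $1-\epsilon$, independently of the coarse digits, while keeping monotonicity and the Markov property. A matching tree, dominated by a subcritical branching process, reduces exact coalescence to coalescence of the first $k$ digits, which is a finite-spin problem.

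Your block renormalisation also omits a second key ingredient. Spatial mixing plus the spatial Markov property give no bound on how fast the extremal dynamics coalesce. You need good blocks to have probability tending to $1$ while being determined by a space-time region of bounded aspect ratio. This is what makes Liggett--Schonmann--Stacey and a Peierls bound applicable and lets good blocks form a shielding surface. Concretely, you need coalescence in $\Lambda_n$ within time of order $n$, persisting over a time window of order $n$. A huge burn-in time $T(L)$ does not work. Spatial mixing only bounds the fixed-time disagreement probability by $e^{-cL}$, so persistence over the $\asymp T(L)L^d$ updates in the window requires $T(L)\ll e^{cL}$, which is itself a space-time mixing statement. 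The paper proves it (Proposition~\ref{prop:space_time_mixing}) via a Harel--Spinka renormalisation inequality for the truncated non-coupling probability, fed by total-variation mixing of the digit marginals.

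Step~3 matches the paper's route through Theorem~\ref{thm:EFIID_implies_analyticity}. For Step~1 you can simply cite McBryan--Spencer, but your sketch is not the actual mechanism: a positive density of weakly coupled sites does not by itself produce the blocking surfaces a Peierls argument needs at large $\beta$. The paper instead uses absolute-value-FKG to dominate $\mu^{+1}_{\Lambda,\beta}[\alpha_u]$ by a massive Gaussian field, with the hard wall acting as a mass.
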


\subsection{The three models are exponential factors of i.i.d.\ (EFIIDs)}

The definition of an exponential factor of i.i.d.\ is already present in the literature in spirit,
but not yet defined as a distinct mathematical object.
Below, we give a precise definition.
We start with a standard definition of (classical) factors of i.i.d.

\begin{definition}[Factor of i.i.d.]
    \label{def:FIID}
    Consider a locally finite transitive graph $G=(V,E)$
    (with respect to some group $\Theta\subset\operatorname{Aut}(G)$).
    Let $S$ denote a measurable space,
    and let $\mu$ denote the distribution of a $\Theta$-stationary random field on $S^V$.
    Let $(T,\lambda)$ denote a probability space, and consider $\P:=\lambda^V$.
    A \emph{factor of i.i.d.}\ (FIID) is a map $\phi:T^V\to S^V$
    with the following properties:
    \begin{itemize}
        \item $\phi$ is measurable with respect to the product $\sigma$-algebras on $T^V$ and $S^V$,
        \item $\phi$ is equivariant with respect to $\Theta$,
        \item If $X\sim\P$, then $\phi(X)\sim\mu$.
    \end{itemize}
\end{definition}

FIIDs are useful for encoding correlations in random fields.
For example, if $G=\Z^d$ and if $\phi(X)_u$
is measurable with respect to $X|_{u+\Lambda_n}$,
then $\mu$ is $2n$-dependent (w.r.t.~$\ell^\infty$).
Our models do not fall into this category,
as they exhibit long-range correlations (albeit very weak ones).
To encode those correlations, we first introduce the notion of \emph{local sets}.

\begin{definition}[Local set]
  \label{def:local_set}
  Let $V$ denote a vertex set and $T$ a measurable space.
  Consider the measurable space $T^V$ endowed with the product $\sigma$-algebra.
  A typical element is denoted $X\in T^V$.
  A \emph{local set} is a random subset $\calL=\calL(X)\subset V$
  such that for any $L\subset V$,
  the event $\{\calL=L\}$ is measurable with respect to $X|_L$.
\end{definition}

\begin{definition}[Exponential factor of i.i.d. (EFIID)]
  \label{def:EFIID}
  Consider the setting of Definition~\ref{def:FIID}.
   A FIID is called an \emph{exponential factor of i.i.d.}\ (EFIID) with tails $(c,C)$ for constants $c,C\in(0,\infty)$ if for any $u\in V$,
   there exists a connected local set $\calL_u=\calL_u(X)\ni u$ such that:
   \begin{itemize}
    \item The value of $\phi(X)_u$ is measurable with respect to $X|_{\calL_u}$,
    \item We have $\P[\{|\calL_u|\geq n\}]\leq C e^{-cn}$ for any $n\in\Z_{\geq 0}$.
   \end{itemize}
\end{definition}

Most work in this article is devoted to proving the following result.

\begin{theorem}[The models are EFIIDs]
    \label{thm:EFIID_SWM_XY}
    All three models alluded to above are EFIIDs.
    More precisely, we obtain the following results.
    \begin{itemize}
        \item[\emph{\textbf{XY}}] Consider the XY model in dimension $d\in\Z_{\geq 2}$
        at inverse temperature $\beta\in[0,\beta_c(d))$.
        Then the infinite-volume Gibbs measure $\mu_{\XY,\beta}^{+1}$
        is an EFIID.
        \item[\emph{\textbf{SWM}}] Consider the SWM in dimension $d\in\Z_{\geq 2}$
        at inverse temperature $\beta\in\R_{\geq 0}$.
        Then the infinite-volume Gibbs measure $\mu_{\SWM,\beta}^0$
        is an EFIID.
        \item[\emph{\textbf{Coulomb}}] Consider the two-dimensional Coulomb gas at $\beta\in[0,\beta_{\BKT}(\Villain))$.
        Then the sequence $(\mu_{\Coulomb,\Lambda_n,\beta}^\free)_n$
        tends to some infinite-volume measure $\mu_{\Coulomb,\beta}^\free$ on $\Z^{F(\Z^2)}$ as $n\to\infty$ in the local convergence topology,
        and this limit is an EFIID.
    \end{itemize}
    In the case of the Coulomb gas, we are careful not to call this measure a Gibbs measure, as the Hamiltonian of the model is non-local and therefore it is not so clear what it means exactly to be a Gibbs measure.

    In particular, these results imply that for any of the three models,
    correlations between bounded observables decay exponentially fast in the distance at which they are measured.
\end{theorem}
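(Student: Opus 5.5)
The plan is to build the factor map by \emph{coupling from the past} (CFTP) applied to a monotone local dynamics, namely heat-bath Glauber updates driven by i.i.d.\ uniforms. We then show that the set of randomness needed to determine the state at a site, the \emph{information cluster}, has an exponentially small volume tail. The SWM is the simplest case and serves as a template. Each vertex $x$ carries an i.i.d.\ Poisson clock on $(-\infty,0]$ with i.i.d.\ uniforms attached to the rings. At a ring, the spin $\alpha_x$ is resampled from the conditional law given its neighbours, via the inverse CDF of the uniform. This conditional law is a truncated Gaussian on $[-1,1]$ whose CDF is monotone in the neighbour values, so the update is monotone. Running the dynamics from the extremal configurations $\pm1$ at time $-t$ yields a sandwich. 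Coalescence at the origin by time $0$ means the value at $0$ is determined, and the limit $t\to\infty$ gives a stationary equivariant factor distributed as the unique Gibbs measure.

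The volume bound uses disagreement percolation, controlled by comparing the top and bottom chains. Their discrepancy at $0$ must propagate backwards in time along a path of updates at which the two chains had not yet merged. We would quantify that, at each update, the top and bottom outputs coincide except with probability bounded by a function of the neighbouring discrepancies. For heat-bath on a compact spin space, the total-variation distance between the conditional laws is Lipschitz in the boundary data, with a constant that is small only at high temperature. So a naive comparison gives only a perturbative regime. To reach all $\beta$ for the SWM, and all $\beta<\beta_c$ for XY, we replace it with a non-perturbative input. The expected discrepancy $\E[\alpha^{+}_0(t)-\alpha^{-}_0(t)]$ equals the difference of finite-volume magnetisations under $\pm$ boundary conditions at distance $\asymp t$ (time-space). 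For the SWM this decays exponentially by the McBryan--Spencer theorem, reproved via FKG and absolute-value-FKG. For XY we use the definition of $\beta_c$ applied to the $\cos$-projection, where the monotone coupling is obtained through the Ising embedding $\sigma\mapsto$ signs relative to a random reflection axis (Aizenman--Simon style), which reduces XY to a monotone Ising-type system with random couplings.

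Exponential decay of the expected discrepancy yields exponential tails for the \emph{diameter} of the dependence region, but the definition of an EFIID requires tails for the \emph{volume} $|\calL_0|$. This is the main obstacle. The dependence cone of a site over a time interval of length $t$ has volume of order $t^{d}$ in space, so a diameter bound does not suffice. To upgrade, I would use a multiscale/renormalisation argument. Call a block of side $L$ good if CFTP run inside it with arbitrary boundary data coalesces in time $L$. By exponential decay, this happens with probability $1-e^{-cL}$, and the event is local. We then choose $\calL_0$ adaptively, revealing uniforms only in the blocks explored. This is a local set in the sense of Definition~\ref{def:local_set}, since the exploration decides its own stopping. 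Bad blocks form a finitely dependent percolation, dominated by subcritical Bernoulli percolation for large $L$ (Liggett--Schonmann--Stacey), whose clusters have exponential volume tails. Then $\calL_0$ is contained in the union of the bad cluster of $0$ and a bounded neighbourhood, giving $\P[|\calL_0|\ge n]\le Ce^{-cn}$.

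For the Coulomb gas, we first apply the XY argument to the Villain model, whose heat-bath update is equally monotone under the same embedding and whose $\beta_{\BKT}$ gives exponential decay. The charges are then local functions of the Villain spins on the corners of each face together with the extra i.i.d.\ edge variables $\zeta$, as in~\cite{GarbanSepulveda_2023_QuantitativeBoundsVortex}. Taking the union of the four corner clusters plus the face still gives a connected local set with exponential volume tails. Local convergence of $\mu^\free_{\Coulomb,\Lambda_n,\beta}$ follows from that of the Villain measures, which have a unique limit by exponential decay, together with continuity of the local map. Finally, decay of correlations for bounded observables holds because the observables become independent once their information clusters are disjoint, and that failure has exponentially small probability.
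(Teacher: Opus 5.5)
There is a genuine gap at the very first step. With the inverse-CDF heat-bath coupling, the top and bottom chains of the SWM \emph{never} coalesce in finite time when $\beta>0$. The conditional law at $u$ is a Gaussian with mean $\frac1D\sum_{v\sim u}\alpha_v$ truncated to $[-1,1]$. Two such laws with different means have CDFs that are strictly ordered on $(-1,1)$, so $F_\mu^{-1}(U)\neq F_\nu^{-1}(U)$ almost surely. Since the chains stay ordered, the neighbour means coincide only if all neighbours already agree, and by induction over update times no site ever coalesces.

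Hence your good-block event (CFTP in the block coalesces within time $L$) has probability zero, not $1-e^{-cL}$. The limit $t\to\infty$ gives at best a factor whose value at $0$ depends on the entire infinite past, with no finite information cluster. This is exactly the obstruction the paper singles out. Its key new ingredient is a monotone Glauber dynamic with \emph{matching digits} (Lemma~\ref{lem:glauber_SWM}). You keep the first $k$ decimal digits produced by the inverse-CDF coupling, then resample the rest from the conditional law restricted to the corresponding interval of length $10^{-k}$. For $k$ large these restricted laws dominate $(1-\epsilon)$ times the uniform law. Lemma~\ref{lem:grand_coupling_almost_uniform} then couples them monotonically and makes them all \emph{equal} with probability $1-\epsilon$, independently of the first $k$ digits. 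A subcritical matching tree (Lemma~\ref{lem:tree_eps_perco}) reduces exact coalescence at the origin to agreement of the first $k$ digits at the leaves, which is a question about a finite spin space. A single minorisation step would not achieve this at large $\beta$: its coalescence probability is exponentially small in $\beta$, so the analogous tree is supercritical.

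The passage from spatial to space-time mixing is also not justified. The expected discrepancy $\E[\alpha^+_0(t)-\alpha^-_0(t)]$ is a dynamical quantity, not a difference of finite-volume magnetisations, because the time-$t$ laws are not Gibbs measures. Moreover, for continuous spins a small expected discrepancy gives no bound on $\P[\alpha^+_0(t)\neq\alpha^-_0(t)]$, which here equals one. The paper instead works with the truncated field $\lfloor\alpha\rfloor_k$. There Lemma~\ref{lem:TV} converts total variation into disagreement probability at a cost of order $10^k$. The Harel--Spinka renormalisation argument (Proposition~\ref{prop:subpolynomial_decay_coupling}, via Lemma~\ref{lem:tool_superpolynomial_decay}) then upgrades the spatial mixing of Lemma~\ref{lem:spatial_mixing_SWM} to superpolynomial decay of non-coupling, which suffices for Liggett--Schonmann--Stacey.

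For XY, an Ising embedding along a random axis does not give a partial order on full configurations, with extremal elements, that the heat bath preserves. The paper uses the coordinate representation $(\alpha,\omega,\eta)$ of Lemma~\ref{lemma_density_quintuple}, ordered with $\omega$ reversed and with $+1,+i$ as extremal boundary conditions. Updates there depend on the $\omega$- and $\eta$-clusters of the neighbours, so the dynamic is only almost Markov. This forces the extra no-crossing conditions in the definition of good boxes and the enlarged matching tree, and your plan has no substitute for either. Your coarse-graining step and the reduction of the Coulomb gas to the Villain model are otherwise in line with the paper.
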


\begin{remark}[Debye screening in the Coulomb gas]
    The Hamiltonian in the definition of the Coulomb gas is long-range.
    However, it was predicted in the physics literature that charges of opposing signs should gather in such a way
    that these long-range effects cancel out over
    long distances, for sufficiently small $\beta$.
    This effect is called \emph{Debye screening}.
    Debye screening at perturbatively small $\beta$ were previously obtained in \cite{Brydges_1978_RigorousApproachDebye,Yang_1987_DebyeScreeningTwodimensional},
    and~\cite{Sepulveda_2022_ScalingLimitDiscrete} describes how
    exponential decay in the dual height function model implies Debye screening in the Coulomb gas.

    The current work, in combination with~\cite{Lammers_2023_DichotomyTheoryHeight,Lammers_2023_BijectingBKTTransition} sharpens the picture:
    the Debye phase in the Coulomb gas contains the complement of the BKT phase in the 
    Villain model and in the localised (exponential decay) phase in the dual height function model.
    Moreover, the EFIID structure implies screening in a strong sense, namely that any finite number of local observables of any type, mix exponentially fast in the distance at which they are measured. 
\end{remark}

EFIIDs are useful for studying analyticity~\cite{Ott_Analyticity_Ising,Ott_Analyticity_RCM}.
In fact, Theorem~\ref{thm:EFIID_SWM_XY}
essentially implies Theorems~\ref{thm:analyticity_XY}, \ref{thm:Coulomb_analytic}, and \ref{thm:analyticity_SWM}
(see the next section for details;
the situation is slightly more complicated in the case of Theorem~\ref{thm:Coulomb_analytic}).
The main contribution of this article is therefore to show that the three models are indeed EFIIDs.

%% file: sections_new/intro/03_organisation.tex
\section{Proof overview and article organisation}

Subsection~\ref{subsec:organisation_proof_EFIID} describes how the EFIIDs (Theorem~\ref{thm:EFIID_SWM_XY}) are constructed in the remaining parts of this article.
The existence of such EFIIDs forms the core result.
Subsection~\ref{subsec:conditional_proofs_2} describes how this result implies analyticity.
This implication was already in~\cite{Ott_Analyticity_Ising,Ott_Analyticity_RCM},
and we merely include a proof sketch.

\subsection{Organisation of the proof of Theorem~\ref{thm:EFIID_SWM_XY}}
\label{subsec:organisation_proof_EFIID}

We execute our construction separately for the SWM and for the XY model.
The former case is simpler, but already illustrates some of the key ideas.
It is contained in Part~\ref{part:SWM}.
The latter case is a bit more delicate (additional complications arise),
and the construction is contained in Part~\ref{part:XY}.

Both parts are organised in tandem.
\begin{itemize}
    \item\textbf{A new Glauber dynamic (Sections~\ref{sec:glauber} and~\ref{sec:glauber_XY}).} At the first step, we observe that the models enjoy certain monotonicity properties,
    and we construct a Glauber dynamic that is compatible with these monotonicity properties.
    The ultimate goal is to perform a coupling-from-the-past procedure.
    This is more complicated for our models than for models with discrete spins (such as the Ising model),
    since the natural monotone Glauber dynamic does not couple in finite time.
    We introduce an alternative Glauber dynamic that \emph{does} couple in finite time, \emph{and} has the correct monotonicity property.
    The construction of this alternative Glauber dynamic is one of the key ideas that makes the construction work.
    \item\textbf{Spatial mixing (Sections~\ref{sec:spatial_mixing_SWM} and~\ref{sec:XY_spatial_mixing}).}
    Then, we prove for each model that the finite-volume Gibbs measures mix exponentially fast.
    The proofs are model-dependent and do not involve the Glauber dynamic.  
    \item\textbf{Space-time mixing (Sections~\ref{sec:spacetime} and~\ref{sec:spacetime_XY}).}
    Then, we prove space-time mixing for this Glauber dynamic.
    This relies on the ideas in the work of Harel and Spinka~\cite{harel_spinka},
    but the proof is more complicated because the spin space above each vertex is continuous rather than finite.
    \item \textbf{Coarse-graining towards a subcritical percolation (Sections~\ref{sec:efiid_SWM} and~\ref{sec:efiid_XY}).}
    The space-time mixing property indicates that for fixed $\beta$,
    we may fix, once and for all, a space-time scale on which the Glauber dynamic couples extremely fast.
    More precisely, the coarse-grained dependency clusters in space-time are dominated
    by a subcritical Bernoulli percolation.
    In particular, those clusters exhibit exponential decay in their volume.
    This property remains true under projection (of the space-time clusters onto the spatial dimension),
    which yields the desired EFIID structure.
\end{itemize}
This proves Theorem~\ref{thm:EFIID_SWM_XY} for the XY model and the SWM,
but not for the Coulomb gas.
That requires two more steps.
\begin{enumerate}
    \item The Villain model has an EFIID exactly like the XY model (Theorem~\ref{thm:VILLAIN_EFIID}).
    \item The Coulomb gas can be locally sampled in terms of the Villain model;
    see our discussion following Definition~\ref{def:Coulomb}.
    This implies immediately that the EFIID of the Villain model
    also yields an EFIID for the Coulomb gas.
\end{enumerate}

\subsection{Proof of Theorems~\ref{thm:analyticity_XY}, \ref{thm:Coulomb_analytic}, and \ref{thm:analyticity_SWM} conditional on Theorem~\ref{thm:EFIID_SWM_XY}}
\label{subsec:conditional_proofs_2}

We now explain how the existence of an EFIID (Theorem~\ref{thm:EFIID_SWM_XY}) implies
analyticity of the free energy for the XY model and the SWM (Theorems~\ref{thm:analyticity_XY} and~\ref{thm:analyticity_SWM}).
Similarly, the EFIID for the Villain model (Theorem~\ref{thm:VILLAIN_EFIID})
leads to analyticity of the free energy of the Villain model (Theorem~\ref{thm:VILLAIN_ANALYTICITY}),
and thus to analyticity of the free energy of the Coulomb gas (Theorem~\ref{thm:Coulomb_analytic}).

We record the following general result.

\begin{theorem}[EFIID implies analyticity \cite{Ott_Analyticity_Ising,Ott_Analyticity_RCM}]
    \label{thm:EFIID_implies_analyticity}
 Let $\mu$ be a stationary random field on $S^{\Z^d}$.
 Write $\sigma\sim\mu$ for the random element.
 Consider a finite set $A\subset\Z^d$
 and a bounded observable $F_A$ that is $\sigma|_A$-measurable.
 If $\mu$ is an EFIID, then there exists a constant $\delta=\delta(\mu,F_A)>0$ such that the function
 \[
    \tilde F_A:(-\delta,\delta)\to\R,\,\epsilon\mapsto
    \lim_{n\to\infty}\frac1{|\Lambda_n|}\log\mu\Big[ \exp\Big(\eps\sum\nolimits_{x\in\Lambda_n}F_A(\tau_x\sigma)\Big)\Big]
 \]
 is analytic. 
 Here $\tau_x(\sigma)$ means that we translate the configuration $\sigma$ by $x$.
 \end{theorem}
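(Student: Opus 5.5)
Since $\mu$ is an EFIID, I realise $\sigma=\phi(X)$ with $X\sim\P=\lambda^{\Z^d}$ i.i.d., and rewrite the generating function on the i.i.d.\ space. Put $G_x:=F_A(\tau_x\phi(X))$. By the EFIID property, $G_x$ is measurable with respect to $X|_{\calL(x)}$, where $\calL(x):=\bigcup_{a\in A}\calL_{x+a}$. After enlarging by a fixed path joining the points of $x+A$, $\calL(x)$ is a connected local set. A union bound gives $\P[|\calL(x)|\geq n]\leq C'e^{-c'n}$. I then write
\[
 \exp\Big(\eps\sum_{x\in\Lambda_n}G_x\Big)=\prod_{x\in\Lambda_n}\big(1+(e^{\eps G_x}-1)\big)=\sum_{B\subset\Lambda_n}\prod_{x\in B}w_x,\qquad w_x:=e^{\eps G_x}-1,
\]
with $|w_x|\leq e^{|\eps|\|F_A\|_\infty}-1=:\eta(\eps)$, which is small when $\eps$ is small. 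The goal is to turn $\E\big[\prod_{x\in B}w_x\big]$ into a polymer system with exponentially decaying activities, so that a standard cluster expansion (Kotecký--Preiss) gives $\frac1{|\Lambda_n|}\log\E[\cdots]$ as an absolutely convergent series of functions analytic in $\eps$, uniformly in $n$. A uniform limit of analytic functions is then analytic.

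The key step is the polymer decomposition. For each $x$, I weight $w_x$ by the event that the local set equals a given connected $L\ni x$:
\[
 w_x=\sum_{L}w_x\mathbf 1[\calL(x)=L]=:\sum_{L}w_{x,L},
\]
where $w_{x,L}$ is $X|_L$-measurable. Expanding, $\E[\prod_{x\in B}w_x]=\sum_{(L_x)}\E[\prod_x w_{x,L_x}]$. I group the indices $x\in B$ into connected components of the overlap graph of the sets $L_x$. Since $X$ is i.i.d.\ and each factor depends only on the coordinates in its $L_x$, the expectation factorises over components whose unions are disjoint. So a polymer is a pair $\gamma=(B_\gamma,(L_x)_{x\in B_\gamma})$ whose union $\bar\gamma$ is connected, with activity $\rho(\gamma)=\E[\prod_{x\in B_\gamma}w_{x,L_x}]$. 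Polymers are compatible when their supports $\bar\gamma$ are disjoint. This is not quite the same as the sum over $(L_x)$, because non-overlapping sets of distinct components may still touch or be adjacent. I handle this by using exact disjointness of supports as the incompatibility relation, which is precisely what independence under product measure requires. The representation is then exact: $\E[\cdots]=\sum_{\text{compatible families}}\prod\rho(\gamma)$, a hard-core polymer gas on $\Z^d$ whose activities are analytic (indeed entire) in $\eps$.

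The main obstacle is the Kotecký--Preiss criterion, i.e.\ bounding
\[
 \sum_{\gamma\ni 0}|\rho(\gamma)|e^{a|\bar\gamma|}\leq a
\]
for some $a>0$ and all small $\eps$. I plan to use Hölder's inequality: $|\rho(\gamma)|\leq \eta^{|B_\gamma|}\prod_x\P[\calL(x)=L_x]^{1/|B_\gamma|}$. That is too lossy, so I would instead bound directly by $\eta^{|B_\gamma|}\,\P[\forall x\in B_\gamma:\ \calL(x)=L_x]$ and sum over choices. Fix the support $S=\bar\gamma$ (connected, containing $0$); there are at most $K^{|S|}$ such sets. Because $|\calL(x)|$ has exponential tails, summing over the $L_x$ costs a factor $C''e^{-c''\max_x|L_x|}\leq C''e^{-c''|S|/|B|}$. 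Choosing subsets $B\subset S$ costs $2^{|S|}$.

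The delicate regime is many $x$ with small overlapping $L_x$. There I gain from $\eta^{|B|}$ together with $|S|\leq\sum_x|L_x|$, using the tail bound jointly: $\sum_{(L_x)}\prod_x$ (tail of $|L_x|$) is at most $\prod_x\sum_{L\ni x}$. Here the joint probability is bounded by a single one, and I must recover the product. This is the genuinely hard point. I would try to avoid it by redefining $w_{x,L}$ to sum over $L$ up to the canonical exploration, so that distinct $x$ use disjoint randomness where their sets do not overlap. Alternatively, I would follow Ott's approach: coarse-grain to a mesoscale on which the EFIID clusters are dominated by subcritical Bernoulli percolation, which yields product bounds $\prod_x e^{-c|L_x|}$ directly. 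Given that product bound, the sum is at most $\sum_{|S|}(2KC'')^{|S|}\eta^{\,\cdot}\,e^{-c|S|}$, which is small once $\eta$ is small. This determines $\delta$.
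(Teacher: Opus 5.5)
Your polymer representation is sound: each $w_{x,L}$ is $X|_L$-measurable and $X$ is i.i.d., so grouping by overlap components gives an exact hard-core gas with entire activities. The gap is that the Koteck\'y--Preiss bound, which is the whole content of the theorem, is never established, and neither remedy you suggest closes it. For fixed $B$, summing over the $(L_x)$ leaves
\[
 \sum_{B}\eta^{|B|}\,\E\Big[e^{a|U_B|};\ \text{the overlap graph of }(\calL(x))_{x\in B}\text{ is connected},\ 0\in U_B\Big],\qquad U_B:=\bigcup_{x\in B}\calL(x),
\]
but the EFIID definition only controls the marginal law of a single $|\calL_u|$. This gives at best $\P[U_B=S]\leq |B|\,Ce^{-c|S|/|B|}$. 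That is not summable against the choices of $B\subset S$: for $|B|\approx|S|^{1/2}$ the terms are at least of order $(|S|^{1/2}\eta e^{-c})^{|S|^{1/2}}$.

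Re-indexing along a canonical exploration (your first option) is only bookkeeping. It cannot turn a marginal tail into a bound on joint occurrences. The bound you hope to get from your second option, $\P[\forall x\in B:\calL(x)=L_x]\lesssim\prod_{x\in B}e^{-c|L_x|}$, is false even in the most favourable case. Take $A=\{0\}$, and let $\calL_u$ be the open cluster of $u$ together with its closed outer boundary in subcritical Bernoulli site percolation; this is a connected local set with exponential tails. All $k$ sites of an open cluster $C$ share the local set $\bar C$, so the joint probability is $p^k(1-p)^{|\partial C|}=e^{-O(k)}$, while your product is at most $e^{-ck^2}$. Overlapping local sets are strongly positively correlated. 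What can hold is decay in the size of the union, not a product over $x$.

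Your final display also does not become small as $\eps\to0$. Polymers with $|B|=1$ and a large $L_x$ carry a single factor $\eta$. Summing the support $S$ with entropy $K^{|S|}$ against $e^{-c|S|}$ therefore requires $c>\log(2KC'')$, which is a condition on $\mu$, not on $\eps$. So your last sentence does not determine $\delta$.

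The missing joint estimate is exactly what the paper imports from Ott rather than deriving it from the bare tail bound. Coarse-graining at a mesoscale $L$ yields a dependency-encoding measure in which the information is carried by bad blocks, stochastically dominated by Bernoulli percolation of density $\eps_L\to0$. For the EFIIDs constructed in this paper this is Lemmas~\ref{lem:stoch_dom} and~\ref{lem:peierls}. Any prescribed set of $m$ blocks is then bad with probability at most $\eps_L^m$. This supplies both the product structure and a decay rate that beats the block-level entropy once $L$ is large. The cluster expansion of $G_n(\eps)$ is run on these blocks, giving an analytic extension of $\frac{1}{|\Lambda_n|}\log G_n$ bounded uniformly in $n$ on a small disc. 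The limit then follows from Vitali's theorem together with convergence on a real interval around $0$, obtained by subadditivity.
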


 The proof is entirely contained in~\cite{Ott_Analyticity_Ising,Ott_Analyticity_RCM},
 although the theorem is stated in a less general form.
 We now describe how Theorem~\ref{thm:EFIID_implies_analyticity} implies Theorems~\ref{thm:analyticity_XY}, \ref{thm:Coulomb_analytic}, and~\ref{thm:analyticity_SWM}.
 A proof sketch of Theorem~\ref{thm:EFIID_implies_analyticity} is given at the end of this section.

\begin{proof}[Proof of analyticity of the free energy]
    We show that the free energy may be seen as the analytic function appearing in Theorem~\ref{thm:EFIID_implies_analyticity}.
    We do this for the SWM model; the other cases are similar.
    Fix $d$ and $\beta$.
    Let $A:=\{0,e_1,\ldots,e_d\}$,
    and
    \[
        F_A(\alpha) := \sum_{i=1}^d (\alpha_0 - \alpha_{e_i})^2.
    \]
    Then asymptotically in $n\to\infty$, we get
    \[
        \sup_{\alpha}\Big|H_{\SWM,\Lambda_n,\beta}^0(\alpha) - \sum\nolimits_{x\in\Lambda_n}F_A(\tau_x\alpha)\Big| =
        o(|\Lambda_n|),
    \]
    and so, for $|\epsilon|<\delta$,
    \[
        \tilde F_A(\eps) = \lim_{n\to\infty}\frac1{|\Lambda_n|}\log\mu_{\SWM,\beta}^0[e^{-\epsilon H_{\SWM,\Lambda_n}^0}]
        = \mathbf{\mathfrak{f}}_{\SWM}(\beta+\epsilon)-\mathbf{\mathfrak{f}}_{\SWM}(\beta).
    \]
    The equality on the right is standard, for models where the potential on each edges remains
    uniformly bounded (which is indeed the case for all models considered in this article).
    This proves analyticity of $\mathbf{\mathfrak{f}}_{\SWM}$ on $(\beta-\delta,\beta+\delta)$.

    The same proof holds for the XY model, using the measure $\mu^{+1}_{\XY,\beta}$ when $\beta < \beta_{c}(d)$. 
\end{proof}

Finally, we briefly discuss the proof of Theorem~\ref{thm:EFIID_implies_analyticity}.

\begin{proof}[Sketch of proof of Theorem~\ref{thm:EFIID_implies_analyticity}]
The existence of an EFIID for the measure, together with the coarse-graining procedure in~\cite{Ott_Analyticity_RCM}, imply the existence of a \emph{dependency encoding measure} in the sense of~\cite[Theorem 4.1]{Ott_Analyticity_RCM}. We point out that actually the condition of being EFIID is more restrictive than the existence of a dependency encoding measure~\emph{à la}~\cite{Ott_Analyticity_RCM}.  
The remainder of the proof is routine, as a convergent cluster expansion of the quantity
\[ G_n(\eps) := \mu\big[\exp\big(\eps\sum_{x \in \Lambda n} F_A(\tau_x \sigma )\big)\big] \]
may be produced with the method of~\cite{Ott_Analyticity_RCM} when $\eps$ is sufficiently small. This uses the fact that $F_A$ is bounded. 
Once the convergent cluster expansion is produced, the rest of the proof is routine --- the cluster expansion produces an analytic extension of $\log G_n$ on a small disk which is uniformly bounded in $n$, and Vitali's theorem is used to get the convergence and analyticity of its limit when $n$ tends to infinity.
For using Vitali's convergence theorem, one needs to check the convergence of $\log G_n$ on a subinterval of the real line containing 0.
Here our setting is slightly more general than~\cite{Ott_Analyticity_RCM}, but the convergence is ensured by the fact that $\mu$ is an EFIID, and classical sub-aditivity arguments.
We refer to~\cite{Ott_Analyticity_Ising,Ott_Analyticity_RCM} for details. 
\end{proof}

%% file: sections_new/SWM/main.tex
\input{sections_new/SWM/01_intro.tex}
\input{sections_new/SWM/02_glauber.tex}

\input{sections_new/SWM/03_spacetime.tex}

\input{sections_new/SWM/04_conclusion_efiid.tex}
\input{sections_new/SWM/05_mixing.tex}

%% file: sections_new/SWM/01_intro.tex
\section{SWM: Proof overview}

In many different situations, factors of i.i.d.\ are constructed
out of Glauber dynamics using a coupling-from-the-past (CFTP) procedure~\cite{vdb_steif_cftp,haggstrom_steif_cftp,MR3486171,spinka_cftp}.
Monotonicity of the Glauber dynamic is often a crucial ingredient
to make the procedure work.
The local set $\calL_u$ in the EFIID can be constructed explicitly,
by analysing how far back in the past we must look in space-time
to determine the value of $\phi(X)_u$.

In our case, we run into a new issue:
since the SWM takes real values, the natural candidate
for the Glauber dynamic (using the canonical grand coupling of all probability distributions
on $\R$) will never exactly couple in finite time.
Thus, one must look back infinitely far in space-time,
and the local sets are therefore too big to yield an EFIID.

We circumvent this issue in three steps (explained later in further detail).
\begin{itemize}
    \item First, we propose a new Glauber dynamic for the SWM,
    which satisfies the following properties:
    \begin{enumerate}
        \item\label{prop:mono} The monotonicity property mentioned above,
        \item\label{prop:digits} When the vertex $u$ is resampled, with very large probability,
        the digits in the decimal expansion of $\alpha_u$ after the first
        $k$ digits, are sampled uniformly at random independently of everything else,
        \item\label{prop:markov} The Markov property.
    \end{enumerate}
    Formal definitions may be found in Subsection~\ref{subsec:abstract_glauber}.
    Property~\ref{prop:digits} is the key modification, as it allows us to couple the dynamics up to the first $k$ digits in finite time, and then use the randomness in the remaining digits to achieve an exact coupling.
    \item Second, we show that the original Gibbs measure satisfies some
    very strong spatial mixing properties.
    \item Third, we prove that the mixing together with Properties \ref{prop:mono} and~\ref{prop:digits} of the new dynamic, imply that the Glauber dynamic has good space-time mixing.
    This proof closely follows the work of Harel and Spinka~\cite{harel_spinka}
    but we need to make some slight modifications to deal with the
    real-valued nature of the model.
    \item Finally, we combine space-time mixing of the Glauber dynamic
    with Property~\ref{prop:markov} to construct an EFIID representation of the infinite-volume SWM.
\end{itemize}

The remaining sections are organised as follows.
Spatial mixing of the Gibbs measure is stated here, but proved at the end, in Section~\ref{sec:spatial_mixing_SWM}.
In Section~\ref{sec:glauber}, we introduce the new Glauber dynamic,
and recall some general statement concerning coupling from the past.
Section~\ref{sec:spacetime} adapts the Harel--Spinka argument to our setup.
Finally, in Section~\ref{sec:efiid_SWM}, we use the space-time mixing of the Glauber dynamic and the Markov property to construct the EFIID representation of the infinite-volume SWM, and conclude the proof of Theorem~\ref{thm:EFIID_SWM_XY} for the square-well model. 

The main ingredient for proving the above-mentioned fast space-time mixing of the Glauber dynamics is the following mixing result, whose proof is postponed to Section~\ref{sec:spatial_mixing_SWM}. 
In what follows, when $\mu$ and $\nu$ are two probability measures on a measurable space $(\Omega, \calF)$, we use the notation
\[ \Vert \nu - \mu \Vert_{\mathrm{TV}}\] 
to denote the usual \emph{total variation distance} between $\mu$ and $\nu$.

\begin{lemma}[Spatial mixing of the Gibbs measure of the SWM]
    \label{lem:spatial_mixing_SWM}
    Fix the dimension $d\in\Z_{\geq 1}$ and the inverse temperature $\beta\in\R_{\geq 0}$.
    Then there exist constants $C<\infty$ and $c>0$ such that
    \[
        \Big\|\mu_{\Lambda_{2n},\beta}^{+1}|_{\Lambda_n}-\mu_{\Lambda_{2n},\beta}^{-1}|_{\Lambda_n}\Big\|_{\operatorname{TV}}
        \leq Ce^{-cn}
    \]
    for all $n\in\Z_{\geq 1}$.
\end{lemma}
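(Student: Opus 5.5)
The plan is to compare the two boundary conditions through a monotone coupling and to control the probability that the coupled fields disagree somewhere in $\Lambda_n$. The SWM is a ferromagnetic model with real spins and convex (quadratic) pair interaction, restricted to the lattice $[-1,1]$. Its finite-volume measures therefore satisfy the FKG lattice condition, and $\mu^{+1}_{\Lambda_{2n},\beta}$ stochastically dominates $\mu^{-1}_{\Lambda_{2n},\beta}$. We take the monotone coupling $(\alpha^+,\alpha^-)$ with $\alpha^+\geq\alpha^-$ pointwise. The total variation distance restricted to $\Lambda_n$ is then at most the probability that $\alpha^+_x\neq\alpha^-_x$ for some $x\in\Lambda_n$. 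This is not directly useful for continuous spins, since the coupled values will almost never coincide exactly. So we instead bound the total variation distance by a disagreement-percolation argument.

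The key symmetry is the spin flip $\alpha\mapsto-\alpha$, which maps $\mu^{+1}$ to $\mu^{-1}$. Write $\alpha=s|\alpha|$ with signs $s$. By the absolute-value-FKG inequality of~\cite{LammersOtt_2024_DelocalisationAbsolutevalueFKGSolidonsolid}, conditionally on $|\alpha|$ the signs form an Ising-type model with nonnegative couplings $2\beta|\alpha_x||\alpha_y|$. The law of $|\alpha|$ itself is FKG and, for both boundary conditions, dominated by the law under boundary condition $|\zeta|=1$. The sign-flip symmetry then gives a Markov-type exploration. We explore from the boundary of $\Lambda_{2n}$ inwards the set of vertices where $|\alpha_x|$ is ``large'' (say $\geq\eta$ for a small $\eta$), or more precisely a random-cluster representation of the sign Ising model with edge weights $1-e^{-4\beta|\alpha_x||\alpha_y|}$.

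If no open cluster connects $\partial\Lambda_{2n}$ to $\Lambda_n$, the two measures agree on $\Lambda_n$. This holds because one can resample the signs of all clusters not touching the boundary, and the boundary condition $\pm1$ only enters through the boundary clusters. Flipping all signs outside these clusters therefore maps one conditional law to the other while leaving $\Lambda_n$ unchanged, up to a symmetric reflection. Hence
\[
  \Big\|\mu_{\Lambda_{2n},\beta}^{+1}|_{\Lambda_n}-\mu_{\Lambda_{2n},\beta}^{-1}|_{\Lambda_n}\Big\|_{\operatorname{TV}}\leq \P\big[\partial\Lambda_{2n}\leftrightarrow\Lambda_n\big],
\]
where the right-hand side concerns the joint $(|\alpha|,\omega)$ measure with the $+1$ boundary condition.

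The main obstacle is showing that this connection probability decays exponentially for every $\beta$, since $\beta$ may be large. The edge probabilities $1-e^{-4\beta|\alpha_x||\alpha_y|}$ are close to one wherever $|\alpha|$ is not small. Here the boundedness $[-1,1]$ must be exploited, exactly as in McBryan--Spencer: the field cannot spread out, so typical configurations have $\alpha$ nearly constant. I would first try to prove that the conditional probability that $|\alpha_x|<\eta$ is uniformly bounded below, given everything outside $x$ and its neighbours. This follows from bounded single-site densities, since the Gibbs density of $\alpha_x$ on $[-1,1]$ is bounded above and below by $e^{\pm 8d\beta}$ times Lebesgue. Combined with FKG for $|\alpha|$, this would dominate the open-edge process by a process that is closed at each site with probability at least $p(\beta,\eta)>0$.

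This alone does not beat the percolation threshold at large $\beta$. To close the gap I would coarse-grain: on a block of side $L$, the probability that some vertex in every crossing path has $|\alpha|<\eta$ tends to one as $L\to\infty$. This uses an energy argument on $[-1,1]$ showing that the field on a block fluctuates enough to cross zero: the sign Ising model has no long-range order because the measure is genuinely symmetric at the level of $|\alpha|$. A finite-size criterion combined with comparison to subcritical Bernoulli percolation on the block lattice then gives the $Ce^{-cn}$ bound.
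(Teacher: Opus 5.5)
Your first reduction is correct and is in substance the paper's first step. The paper bounds the total variation distance by the probability that $\Lambda_n$ is connected to $\partial\Lambda_{2n}$ in $\{\tilde\alpha\neq0\}$ for the Brownian (cable-graph) interpolation. Those sign clusters are distributed as your FK clusters with weights $1-e^{-4\beta|\alpha_x||\alpha_y|}$.

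\textbf{The gap} is in the step you flag as the main obstacle, which is the whole content of the lemma. Your coarse-graining needs a finite-size input: for some large $L$, with high probability every crossing of a block contains a blocking site. In particular, with the $+1$ boundary condition on a box of side $2L$, the boundary FK cluster must rarely reach the centre. By the identity below, this gives $\mu^{+1}_{\Lambda_{2L},\beta}[\alpha_0]\to 0$, that is, absence of long-range order at \emph{every} $\beta$. You give no argument for this. The ``energy argument'' is unspecified, and your stated reason (no order because the measure is symmetric at the level of $|\alpha|$) is false as a principle. Replacing the uniform law on $[-1,1]$ by $(\delta_{-1}+\delta_{+1})/2$ gives the Ising model, which has the same symmetry and orders at large $\beta$ for $d\geq 2$. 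What distinguishes the SWM is that the flat single-site law makes the confinement to $[-1,1]$ act like a mass pulling the field towards $0$. Your sketch never captures this quantitatively. Once such an input is available, the coarse-graining is in any case unnecessary.

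\textbf{The missing idea}, which is the paper's route, is to reduce to the magnetisation and compare with a massive Gaussian free field. In your representation, the sign of $x$ is $+$ on $\{x\leftrightarrow\partial\Lambda_{2n}\}$ and a fair coin otherwise. Hence $\mu^{+1}_{\Lambda_{2n},\beta}[\alpha_x]=\mathbb E[|\alpha_x|\mathbf 1\{x\leftrightarrow\partial\Lambda_{2n}\}]$. Each edge at a site with $|\alpha_x|<\eta$ is open with conditional probability at most $4\beta\eta$, so
\[
\mathbb P[x\leftrightarrow\partial\Lambda_{2n}]\leq \eta^{-1}\mu^{+1}_{\Lambda_{2n},\beta}[\alpha_x]+8d\beta\eta .
\]
Taking $\eta$ to be the square root of the magnetisation and a union bound over $x\in\partial\Lambda_n$ reduce everything to exponential decay of the magnetisation.

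The paper then shows $\mu^{+1}_{\Lambda,\beta}[\alpha_u]\leq\nu^{+1}_{\Lambda,\beta,m}[\alpha_u]$ for a small mass $m=m(d,\beta)$. It swaps the uniform law on $[-1,1]$ for $\mathcal N(0,1/2m)$ one site at a time:
\begin{itemize}
\item For $m\leq 2d\beta$, the Gaussian conditional law puts mass at least $\epsilon(d,\beta)>0$ outside $[-1,1]$ (worst case: all neighbours at $0$).
\item Its restriction to $[-1,1]$ is the old measure reweighted by $e^{-m\alpha_u^2}$, with Radon--Nikodym derivative at most $e^{m}\leq(1-\epsilon)^{-1}$ for $m$ small.
\item Hence $|\alpha_u|$ is stochastically increased, and absolute-value FKG shows that the correlation with the boundary vertex increases.
\end{itemize}
Exponential decay of $\nu^{+1}_{\Lambda,\beta,m}[\alpha_u]=\mathbb E[(1+\tfrac{m}{2d\beta})^{-T}]$ then follows from the random-walk representation. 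Some such quantitative mechanism is indispensable; without it your argument does not close.
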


%% file: sections_new/SWM/02_glauber.tex
\section{SWM: Glauber dynamic}
\label{sec:glauber}

This section is organised as follows:
\begin{itemize}
    \item In Subsection~\ref{subsec:abstract_glauber} we define Glauber dynamics and their abstract properties,
    \item In Subsection~\ref{subsec:construction_glauber_SWM} we construct such a Glauber dynamic for the SWM,
    \item In Subsection~\ref{subsec:cftp} we recall some relevant information on \emph{coupling from the past}.
\end{itemize}
We shall always work in the following setting:
$G=(V,E)$ is a transitive locally finite graph,
and $S$ a measurable space.
In the case of the SWM, $G$ will be $\Z^d$ and $S$ will be the interval $[-1,1]$.

\subsection{Abstract description of the Glauber dynamic}
\label{subsec:abstract_glauber}

The idea of the Glauber dynamic is to construct a Markov chain converging
to the Gibbs measure, by performing local updates at each vertex on the configuration.
The local updates are performed using an independent source of randomness.
In the abstract setting, this sample space for this independent source of
randomness is denoted by $\Sigma$,
a generic element is denoted $\iota\in\Sigma$,
and the probability measure is denoted $\diff\iota$.
A technical requirement for the measures $\mu$ to be sampled via Glauber dynamics
is that $\mu$ should have the \emph{finite energy property}, that means that for any $v \in V$, $\sigma \in S^V$, 
the conditional density of
$\mu( \sigma_v = \cdot ~\vert~\sigma_{u\neq v})$  
is strictly positive almost everywhere on $S$.
This is not a problem for us, as this property is clearly true in all the models considered.

\begin{definition}[Glauber dynamic]
    \label{def:glauber}
    Let $\mu$ denote a stationary measure on $S^V$
    with finite energy.
    A \emph{Glauber dynamic} for $\mu$ is a function
    \[
        R:S^V\times V\times\Sigma \to S^V
    \]
    with the following properties.
    \begin{itemize}
        \item\textbf{Local update.}
        For any $\alpha\in S^V$, $u\in V$ and $\iota\in\Sigma$, we have $R(\alpha,u,\iota)_v=\alpha_v$ for any $v\neq u$.
        \item\textbf{Consistency.}
        For any $\alpha\in S^V$ and $u\in V$, the distribution of $R(\alpha,u,\iota)_u$ in $\diff\iota$ is $\mu[\blank|(\alpha_v)_{v\neq u}]$.
        \item\textbf{Equivariance.}
        For any $\alpha\in S^V$, $u\in V$, $\iota\in\Sigma$ and $\theta\in\Theta$, we have $R(\alpha,u,\iota)\circ\theta=R(\alpha\circ\theta,u\circ\theta,\iota)$.
    \end{itemize}
    We shall also write $R_{u,\iota}:S^V\to S^V,\,\alpha\mapsto R(\alpha,u,\iota)$ for any $u\in V$ and $\iota\in\Sigma$.
\end{definition}

In addition to the above properties,
we shall quickly list some other properties that may or may not hold true for a given Glauber dynamic.
Of course, all these properties shall eventually apply to the Glauber dynamic that we construct in the next subsection.

\begin{definition}[Property~\ref{prop:mono}: Monotonicity]
    \label{def:prop:mono}
    Suppose that $S$ is a partially ordered set.
    Then we call a Glauber dynamic $R$ \emph{monotone} if for any $u\in V$ and $\iota\in\Sigma$,
    the map $R_{u,\iota}:S^V\to S^V$ preserves the partial order at the spin $u$.
    This means that
    $R_{u,\iota}(\alpha)_u\preceq R_{u,\iota}(\alpha')_u$ for any $\alpha,\alpha'\in S^V$ such that $\alpha_v\preceq\alpha'_v$ for all $v\in V$.
\end{definition}

    For any $x\in\R$, write 
    $\lfloor x\rfloor_k:=10^{-k}\lfloor 10^k x\rfloor$
    and
    $\{x\}_k:=x-\lfloor x\rfloor_k$.

\begin{definition}[Property~\ref{prop:digits}: Matching digits]
    \label{def:prop:digits}
    Suppose that $S\subset\R$.
    Consider fixed constants $\epsilon\in[0,1]$ and $k\in\Z_{\geq 0}$.
    We say that a Glauber dynamic $R$ \emph{matches digits up to $(\epsilon,k)$}
    if we may find an event $M\subset\Sigma$ of $\diff\iota$-probability at least $1-\epsilon$ such that:
    \begin{itemize}
        \item For any $u\in V$ and $\iota\in M$, we have
        \[
            \big\{
                \{R_{u,\iota}(\alpha)_u\}_k=\{R_{u,\iota}(\alpha')_u\}_k \text{ for all $\alpha,\alpha'\in S^V$}
            \big\}
        \]
        \item For any $u\in V$ and $\alpha\in S^V$,
        the event $M$ and the random variable $\lfloor R_{u,\iota}(\alpha)_u\rfloor_k$
        are $\diff\iota$-independent.
    \end{itemize}
    The event $M$ is called the \emph{matching event}.
\end{definition}

\begin{definition}[Property~\ref{prop:markov}: Markov property]
    \label{def:prop:markov}
    We say that a Glauber dynamic $R$ has the \emph{Markov property} if for any $\alpha\in S^V$, $u\in V$ and $\iota\in\Sigma$,
    the value of $R(\alpha,u,\iota)_u$ is measurable with respect to $\alpha|_{\calN_u}$ and $\iota$,
    where $\calN_u$ denotes the set of neighbours of $u$ in $G$.
\end{definition}

\subsection{Construction of the Glauber dynamic}
\label{subsec:construction_glauber_SWM}

\begin{lemma}[Glauber dynamic for the SWM with matching digits]
    \label{lem:glauber_SWM}
    Fix any locally finite transitive graph $G=(V,E)$ and any $\beta\in\R_{\geq 0}$.
    Then for any $\epsilon>0$, there exists a constant $k\in\Z_{\geq 0}$ and 
    a Glauber dynamic $R$ for the SWM on $G$ at inverse temperature $\beta$,
    which has Properties~\ref{prop:mono}--\ref{prop:markov},
    matching digits up to $(\epsilon,k)$.
\end{lemma}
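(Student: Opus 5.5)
Fix $u$ and write the conditional law of $\alpha_u$ given $(\alpha_v)_{v\neq u}$. By the Hamiltonian, it has density on $[-1,1]$ proportional to
\[
  x\mapsto \exp\Big(-\beta\sum\nolimits_{v\sim u}(x-\alpha_v)^2\Big)
  =\exp\big(-\beta\deg(u)\,x^2+2\beta s x\big)\cdot\mathrm{const},
  \qquad s:=\sum\nolimits_{v\sim u}\alpha_v .
\]
This density depends on $\alpha$ only through $s\in[-\deg(u),\deg(u)]$ and is increasing in $s$ in the monotone likelihood ratio sense. Moreover, it is bounded above and below by constants $0<a\le b<\infty$ depending only on $\beta$ and the degree. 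Hence the Markov property is automatic once the update is a function of $s$ and $\iota$. Equivariance is also automatic, since the construction uses only $s$ and is identical at every vertex.

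\textbf{Decomposition.} The plan is to write each conditional law $\nu_s$ as a mixture. Partition $[-1,1]$ into the $N=2\cdot 10^k$ dyadic-decimal cells $I_j=[j10^{-k},(j+1)10^{-k})$. Since the log-density has derivative bounded by $L:=2\beta(\deg(u)+\deg(u))$, on each cell the density $p_s$ varies by a factor at most $e^{L10^{-k}}$. Define $m_{s,j}:=\inf_{I_j}p_s$ and write
\[
  \nu_s=(1-\eta_s)\,\rho_s+\eta_s\,\lambda_s,
\]
where $\rho_s$ is the measure with piecewise constant density $m_{s,j}$ on $I_j$, renormalised. This makes it a mixture of uniforms on cells with cell weights $w_{s,j}$. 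The remainder $\lambda_s$ has total mass $\eta_s\le 1-e^{-L10^{-k}}$, which is at most $\epsilon$ once $k$ is large.

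To make the mixture weight independent of $s$ (needed for the independence clause), use the common weight $1-\epsilon$. Choose $k$ so that $\eta_s\le\epsilon$ for all $s$, and rewrite $\nu_s=(1-\epsilon)\rho_s+\epsilon\lambda'_s$ with $\lambda'_s:=(\nu_s-(1-\epsilon)\rho_s)/\epsilon\ge0$.

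\textbf{Sampling.} Let $\iota=(B,U_1,U_2,U_3)$, with $B$ a Bernoulli$(1-\epsilon)$ variable and $U_i$ i.i.d.\ uniform. Let $M=\{B=1\}$.
\begin{itemize}
  \item On $M$, choose the cell index $J=F_{w_s}^{-1}(U_1)$ by inverse CDF, and set $R_{u,\iota}(\alpha)_u=J10^{-k}+10^{-k}U_2$. Then $\{\cdot\}_k=10^{-k}U_2$ does not depend on $\alpha$.
  \item Off $M$, set $R_{u,\iota}(\alpha)_u=F_{\lambda'_s}^{-1}(U_3)$.
\end{itemize}
Consistency holds by the mixture identity. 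Independence of $M$ and $\lfloor R\rfloor_k$ needs care, because $\lfloor\cdot\rfloor_k$ under $\lambda'_s$ must have the same law as under $\rho_s$. To fix this, rescale on each cell: since $p_s\ge m_{s,j}$ on $I_j$, choose $\rho_s$ with cell weights equal to the true cell masses $\nu_s(I_j)$, taking the uniform component in cell $j$ with weight $(1-\epsilon)\nu_s(I_j)$. This is possible when $(1-\epsilon)\nu_s(I_j)\,10^{k}\le m_{s,j}$, that is, when $(1-\epsilon)\sup_{I_j}p_s\le\inf_{I_j}p_s$, which holds for $k$ large by the bounded log-derivative. The remainder then has cell masses $\epsilon\,\nu_s(I_j)$. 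Sample off $M$ as follows: pick the cell $J$ by the same inverse CDF of $(\nu_s(I_j))_j$ from $U_1$, then sample within the cell from the normalised remainder via $U_3$. Now $\lfloor R\rfloor_k=J10^{-k}$ is a function of $U_1$ alone, independent of $B$.

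\textbf{Monotonicity, the main obstacle.} The cell CDF $F_s(j)=\nu_s(\bigcup_{i\le j}I_i)$ is decreasing in $s$ by stochastic monotonicity, so $J$ is nondecreasing in $s$. When $J$ strictly increases, the output increases regardless of what happens within the cell. When $J$ is equal, on $M$ the output is identical. Off $M$ it is the inverse CDF of the within-cell remainder $\propto p_s-(1-\epsilon)\nu_s(I_j)10^k$. I must check that this is stochastically increasing in $s$, and this is the delicate step. The remainder density is $p_s(x)-c_s$ on the cell; the ratio $p_{s'}/p_s$ is increasing in $x$. I would try to show that $x\mapsto (p_{s'}(x)-c_{s'})/(p_s(x)-c_s)$ is increasing, or at least that the CDFs are ordered, using that $c$ is the cell-average rescaled. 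If this fails, the fallback is to replace the within-cell remainder sampler by a uniform on the cell mixed appropriately, or to use the uniform within-cell sampling on both branches with $U_2$ for the fractional part. Then, given equal $J$, stochastic order within a cell only matters off $M$, and I would argue via the coupling $U_3$ with the monotone likelihood ratio of the exponential-tilt family restricted to a cell.
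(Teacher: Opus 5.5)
Your construction is the same as the paper's, but it has a gap at the one step you leave open. Choosing the cell $J$ by inverse CDF from $U_1$ is the paper's first step ($X'_\alpha=\inf F_\alpha^{-1}(U')$). The within-cell mixture of a $(1-\epsilon)$-weighted uniform and a remainder, switched by a Bernoulli variable, is Lemma~\ref{lem:grand_coupling_almost_uniform} applied to the conditional laws $\nu_s(\cdot\mid I_j)$.

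You handle correctly consistency, the Markov property, equivariance, the matching clause, and the independence of $M$ from $\lfloor R\rfloor_k$. The open step is monotonicity off $M$ when $J$ is the same for $s\le s'$, and none of your suggested routes settles it.
\begin{itemize}
\item That $(p_{s'}-c_{s'})/(p_s-c_s)$ is increasing does not follow from the likelihood-ratio property of $p_s$ alone. It would need a computation specific to the Gaussian.
\item Your fallbacks (uniform within the cell on both branches, or some modified remainder sampler) change the within-cell law and break consistency. Off $M$ you must output exactly the remainder law $\lambda_{s,j}$. The only freedom is how to couple these remainders, and for that you need them stochastically ordered.
\end{itemize}

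The missing observation is that the problem disappears at the level of distribution functions once you normalise by the cell mass. Then $c_s/\nu_s(I_j)=(1-\epsilon)10^k$ does not depend on $s$. Write $I_j=[a,a+h)$ with $h=10^{-k}$, and let $G_s$ be the CDF of $\nu_s(\cdot\mid I_j)$. The normalised remainder has CDF
\[
  \tilde G_s(x)=\frac{G_s(x)-(1-\epsilon)(x-a)/h}{\epsilon},\qquad x\in I_j,
\]
and the subtracted term is the same for every $s$. Since $p_{s'}/p_s$ is increasing, the restrictions to $I_j$ remain ordered in the likelihood-ratio sense. Hence:
\begin{itemize}
\item $G_s\ge G_{s'}$ pointwise;
\item therefore $\tilde G_s\ge\tilde G_{s'}$;
\item therefore $\inf\tilde G_s^{-1}(U_3)\le\inf\tilde G_{s'}^{-1}(U_3)$.
\end{itemize}
This is exactly the remark in the proof of Lemma~\ref{lem:grand_coupling_almost_uniform} that $F_\mu\mapsto\tilde F_\mu$ preserves stochastic domination. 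With this line added, your proof is complete.
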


The proof of the lemma requires two simple intermediate results.
To state them, we first need a notion of \emph{stochastic domination}.

\begin{definition}
    \label{def:stochastic_domination}
    Let $\mu$ and $\nu$ denote two probability measures on $\R$.
    Then we say that $\mu$ is \emph{stochastically dominated} by $\nu$
    if their cumulative distribution functions $F_\mu$ and $F_\nu$ satisfy $F_\mu\geq F_\nu$.
\end{definition}

\begin{lemma}[Grand coupling]
    \label{lem:grand_coupling}
    Consider the set $\calM$ of probability measures on $\R$.
    Then we may construct a coupling of all distributions in $\calM$ such that if $\mu$ is stochastically dominated by $\nu$, then the $\mu$-distributed random variable is almost surely less than or equal to the $\nu$-distributed random variable.
    More precisely, we require that we have a family of random variables $(X_\mu)_{\mu\in\calM}$ such that $X_\mu\sim\mu$ for each $\mu\in\calM$
    and such that $X_\mu\leq X_\nu$ whenever $\mu$ is stochastically dominated by $\nu$.
\end{lemma}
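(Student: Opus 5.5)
The plan is to use the quantile coupling. Take a single uniform random variable $U\sim\mathrm{Unif}(0,1)$ on some probability space. For each $\mu\in\calM$ with cumulative distribution function $F_\mu$, define the generalised inverse
\[
    F_\mu^{-1}(u):=\inf\{x\in\R : F_\mu(x)\geq u\},\qquad u\in(0,1).
\]
Then set $X_\mu:=F_\mu^{-1}(U)$. Because $F_\mu$ is nondecreasing, right-continuous, tends to $0$ at $-\infty$ and to $1$ at $+\infty$, the infimum is over a nonempty set bounded below for $u\in(0,1)$. Hence $F_\mu^{-1}(u)$ is a finite real number. Since $U\in(0,1)$ almost surely, each $X_\mu$ is a well-defined real random variable.

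The first step is to verify that $X_\mu\sim\mu$. The key identity is the Galois-type equivalence
\[
    F_\mu^{-1}(u)\leq x \iff u\leq F_\mu(x)
\]
for $u\in(0,1)$ and $x\in\R$. The forward direction uses right-continuity: the set $\{y:F_\mu(y)\geq u\}$ is a closed half-line $[F_\mu^{-1}(u),\infty)$, so $F_\mu^{-1}(u)\leq x$ gives $F_\mu(x)\geq u$. The backward direction is the definition of the infimum. It then follows that $\P[X_\mu\leq x]=\P[U\leq F_\mu(x)]=F_\mu(x)$, so $X_\mu$ has law $\mu$. This right-continuity step is the only point needing care; the rest is routine.

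The second step is monotonicity. Suppose $\mu$ is stochastically dominated by $\nu$, i.e.\ $F_\mu\geq F_\nu$ pointwise. Then for every $u$ we have the inclusion $\{x: F_\nu(x)\geq u\}\subset\{x:F_\mu(x)\geq u\}$. Taking infima gives $F_\mu^{-1}(u)\leq F_\nu^{-1}(u)$ for every $u\in(0,1)$. Therefore $X_\mu\leq X_\nu$ holds surely, not just almost surely, since all variables are built from the same $U$.

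No measurability issue arises for the uncountable family. Each $X_\mu$ is a Borel function of the single variable $U$, because $F_\mu^{-1}$ is monotone. So the family $(X_\mu)_{\mu\in\calM}$ is a legitimate coupling on the space $((0,1),\mathrm{Leb})$. For the later construction of the Glauber dynamic, the source of randomness $\iota$ can then contain this uniform variable $U$.
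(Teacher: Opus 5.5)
Your proposal is correct and follows the paper's argument exactly: the quantile coupling $X_\mu := F_\mu^{-1}(U)$ driven by a single uniform $U$. Your generalised inverse $\inf\{x : F_\mu(x)\geq u\}$ is the precise version of the paper's ``take the infimum of the preimage'' tie-breaking. It also handles measures with atoms correctly, where the preimage of $U$ can be empty, so your write-up supplies details the paper leaves implicit.
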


\begin{proof}
    Sample a uniform random variable $U$ on $[0,1]$,
    and set $X_\mu:=F_\mu^{-1}(U)$ for each $\mu\in\calM$.
    If $F_\mu^{-1}(U)$ is multi-valued then we break ties by taking the infimum of this set.
\end{proof}

\begin{lemma}[Grand coupling for almost uniform measures]
    \label{lem:grand_coupling_almost_uniform}
    Let $\epsilon>0$ and consider the set $\calM$
    of probability measures $\mu$ on $[0,1]$ such that
    $\mu\geq (1-\epsilon)\cdot\diffi x$ (the Lebesgue measure on $[0,1]$)
    as measures.
    Then all distributions in $\calM$ can be coupled together in such a way that:
    \begin{itemize}
        \item If $\mu$ is stochastically dominated by $\nu$, then $X_\mu\leq X_\nu$,
        \item With probability $1-\epsilon$, all the random variables $(X_\mu)_{\mu\in\calM}$ are equal.
    \end{itemize}
\end{lemma}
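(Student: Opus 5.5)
We prove Lemma~\ref{lem:grand_coupling_almost_uniform} by splitting every $\mu\in\calM$ into a common uniform part and a monotone-coupled remainder. For $\mu\in\calM$, the measure $\mu-(1-\epsilon)\diffi x$ is nonnegative with total mass $\epsilon$. If $\epsilon=0$ every $\mu\in\calM$ is Lebesgue measure and the statement is trivial, so assume $\epsilon>0$ and set
\[
  \rho_\mu:=\epsilon^{-1}\big(\mu-(1-\epsilon)\diffi x\big),
\]
a probability measure on $[0,1]$. Then $\mu=(1-\epsilon)\diffi x+\epsilon\rho_\mu$, so $\mu$ is a mixture of the uniform law and $\rho_\mu$.

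The coupling uses three independent random variables: a Bernoulli variable $B$ with $\P[B=1]=1-\epsilon$, a uniform variable $V$ on $[0,1]$, and the grand coupling $(Y_\rho)_\rho$ of Lemma~\ref{lem:grand_coupling} applied to all probability measures on $\R$. Define
\[
  X_\mu:=V \text{ on } \{B=1\},\qquad X_\mu:=Y_{\rho_\mu} \text{ on } \{B=0\}.
\]
By the mixture decomposition, $X_\mu\sim\mu$. On the event $\{B=1\}$, which has probability $1-\epsilon$, all the $X_\mu$ coincide and equal $V$. This gives the second bullet.

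For monotonicity, suppose $\mu$ is stochastically dominated by $\nu$, that is $F_\mu\geq F_\nu$. The distribution functions satisfy $F_{\rho_\mu}(x)=\epsilon^{-1}(F_\mu(x)-(1-\epsilon)x)$ on $[0,1]$, and similarly for $\nu$. Subtracting the same linear function preserves the inequality, so $F_{\rho_\mu}\geq F_{\rho_\nu}$, and $\rho_\mu$ is stochastically dominated by $\rho_\nu$. Lemma~\ref{lem:grand_coupling} then gives $Y_{\rho_\mu}\leq Y_{\rho_\nu}$, so $X_\mu\leq X_\nu$ on $\{B=0\}$. On $\{B=1\}$ the two variables are equal. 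This proves the first bullet almost surely, and pointwise if we take the quantile coupling as constructed.

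The only delicate point is this preservation of stochastic order under removal of the common uniform part. It works precisely because the part removed is the same measure $(1-\epsilon)\diffi x$ for every $\mu$, which is why the uniform component is taken fixed rather than as $\mu\wedge\nu$. For later use in Definition~\ref{def:prop:digits}, we note that $V$ and $B$ are independent, so on the matching event the common value is exactly uniform.
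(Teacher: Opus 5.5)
Your proposal is correct and is essentially the paper's proof: you split off the common $(1-\epsilon)$-uniform part, observe that $F_\mu\mapsto (F_\mu(x)-(1-\epsilon)x)/\epsilon$ preserves stochastic order, and use an independent Bernoulli switch between a shared uniform variable and the quantile coupling of the remainders. The only cosmetic difference is that the paper reuses a single uniform $U$ in both branches instead of your independent $V$; this is immaterial because the Bernoulli variable is independent of it.
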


\begin{proof}
    Consider the cumulative distribution functions $F_\mu:[0,1]\to[0,1]$ of the measures $\mu\in\calM$.
    By assumption, the functions
    \[
        \tilde F_\mu:[0,1]\to[0,1],\,x\mapsto\frac{F_\mu(x)-(1-\epsilon)x}{\epsilon}
    \]
    are also cumulative distribution functions.
    Moreover, the operation $F_\mu\mapsto\tilde F_\mu$ preserves the partial order of stochastic domination
    (this is easy to see from the explicit expression).
    The desired coupling is now constructed as follows.
    First, sample a uniform random variable $U$ on $[0,1]$
    and another Bernoulli random variable $B$ with parameter $\epsilon$, independently of each other.
    \begin{itemize}
        \item If $B=0$, then we set $X_\mu:=U$ for each $\mu\in\calM$.
        \item If $B=1$, then we set $X_\mu:=\inf\tilde F_\mu^{-1}(U)$ for each $\mu\in\calM$.
    \end{itemize}
    This coupling satisfies the desired properties.
\end{proof}

\begin{proof}[Proof of Lemma~\ref{lem:glauber_SWM}]
    Let $D$ denote the degree of $G$.
    Fix $u\in V$.
    For any configuration $\alpha\in[-1,1]^{V\setminus \{u\}}$,
    let $\nu_\alpha$ denote the law of $\alpha_u$ in $\mu_{\{u\},\beta}^{\alpha}$.
    By definition, it is immediate that $\nu_\alpha$ is given by the normal distribution 
    \begin{equation}
        \label{eq:nu_alpha}
        \calN\Big(\frac1D\sum_{v\sim u}\alpha_v,\frac1{2\beta D}\Big),
    \end{equation}
    conditioned to be in the interval $[-1,1]$.
    The map $\alpha\mapsto\nu_\alpha$ has the following properties:
    \begin{itemize}
        \item The law $\nu_\alpha$ only depends on $\alpha|_{\calN_u}$,
        \item The law $\nu_\alpha$ is stochastically increasing in $\alpha$.
    \end{itemize}
    For the construction of the Glauber dynamic,
    it now suffices to construct a grand coupling of all measures $(\nu_\alpha)_{\alpha}$
    preserving the monotonicity and matching the digits.

    We do so in two steps.
    \begin{itemize}
        \item First, we consider the grand coupling of $(\nu_\alpha)_{\alpha}$ given by Lemma~\ref{lem:grand_coupling}:
        we first sample a uniform random variable $U'$ on $[0,1]$,
        and then set $X_\alpha':=\inf F_\alpha^{-1}(U')$.
        \item Next, we would like to \emph{resample} the digits after the first $k$ ones.
        More precisely, we want to couple the distributions
        \[
            \nu_\alpha':=\nu_\alpha[\blank|\{\lfloor 10^k X_\alpha\rfloor=\lfloor 10^k X_\alpha'\rfloor\}].
        \]
        Again, the map $\alpha\mapsto\nu_\alpha'$ is stochastically increasing in $\alpha$.
        Moreover, $\nu_\alpha'$ is just given by the distribution in Equation~\eqref{eq:nu_alpha} conditioned to be in the interval
        \[\Big[10^{-k}\lfloor 10^k X_\alpha'\rfloor,10^{-k}(\lfloor 10^k X_\alpha'\rfloor+1)\Big).\]
        If $k$ is large enough, then each measure $\nu_\alpha'$ is bigger (in the sense of measures)
        than $(1-\epsilon)$ times the uniform distribution on the conditioning interval.
        We may then apply Lemma~\ref{lem:grand_coupling_almost_uniform} to construct a grand coupling of the measures $\nu_\alpha'$
        preserving the monotonicity and matching the digits up to $(\epsilon,k)$.
    \end{itemize}
    Since this Glauber dynamic never inspects the values of $\alpha$ outside of $\calN_u$, it also has the Markov property.
\end{proof}

\subsection{Coupling from the past}
\label{subsec:cftp}
Above, we discussed how to update a single spin $\alpha_u$.
In practice, we want to update all spins many times in order to mix to the Gibbs measure.
We want to encode the information about this continuous Markov chain in terms of a set $\Pi\subset V\times\Sigma\times\R_{\leq 0}$,
where the first coordinate is the vertex to be updated, the second coordinate is the randomness used for the update,
and the third coordinate is the time of the update.
In this context, we call $V\times\R_{\leq 0}$ the \emph{space-time}.
We say that $\Pi$ has \emph{collisions} if two distinct points in $\Pi$ have the same time coordinate,
and we say that $\Pi$ is \emph{locally finite} if for any $u\in V$ and $t\in\R_{\leq 0}$,
only finitely many points of $\Pi$ have the form $(u,\iota,s)$ with $s\in [t,0]$.
Write
\[
    \Omega:=\big\{\Pi\subset V\times\Sigma\times\R_{\leq 0} : \text{$\Pi$ is locally finite and has no collisions}\big\}.
\]
In practice, $\Pi$ is sampled according to a Poisson point process on $V\times\Sigma\times\R_{\leq 0}$,
in which case $\Pi$ almost surely belongs to $\Omega$.

For any $\Pi\in\Omega$ and any bounded space-time subset $B\subset V\times\R_{\leq 0}$, we write
\[
    \Pi(B):=\{(u,\iota,t)\in\Pi : (u,t)\in B\}.
\]
Moreover, we think of $\Pi(B)$ as an ordered set, by ordering the points in $\Pi(B)$ according to their time coordinate,
with the lowest (most negative) time coming first (this is well-defined since $\Pi$ has no collisions).
Finally, we write
\[
    R_{\Pi(B)}:= (R_{u_n,\iota_n}\circ\cdots\circ R_{u_1,\iota_1}):S^V\to S^V ,\,\alpha\mapsto R_{u_n,\iota_n}\circ\cdots\circ R_{u_1,\iota_1}(\alpha),
\]
where $((u_i,\iota_i,t_i))_{i}$ is the ordered enumerate of the triples in $\Pi(B)$.
We now record a number of useful properties of this construction, that goes back to~\cite{propp_wilson}.

\begin{theorem}[Coupling from the past]
    \label{thm:cftp}
    Consider a Glauber dynamic $R$ for a measure $\mu$ on $S^V$.
    Suppose that $S$ is partially ordered and that $R$ is monotone.
    Suppose moreover $S$ has a largest element $\alpha^+$ and a smallest element $\alpha^-$.
    We also write $\alpha^\pm$ for the corresponding elements in $S^V$.
    Then for any fixed $\Pi\in\Omega$:
    \begin{itemize}
        \item For any bounded $B\subset V\times\R_{\leq 0}$, the map $R_{\Pi(B)}:S^V\to S^V$ is monotone,
        \item The element $R_{\Pi(\Lambda\times[-t,0])}(\alpha^+)$ is decreasing in $\Lambda$ and $t$,
        \item The element $R_{\Pi(\Lambda\times[-t,0])}(\alpha^-)$ is increasing in $\Lambda$ and $t$.
    \end{itemize}
    In particular, if
    \[R_{\Pi(\Lambda\times[-t,0])}(\alpha^+)_u=R_{\Pi(\Lambda\times[-t,0])}(\alpha^-)_u\]
    then this value will not change if we replace $\alpha^+$ by any other $\alpha\in S^V$,
    $\Lambda$ by any larger set, and $t$ by any larger time.
\end{theorem}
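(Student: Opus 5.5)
The plan is a direct induction on the number of updates, using only the monotonicity of each single-site map $R_{u,\iota}$ (Definition~\ref{def:prop:mono}) together with the local update property of Definition~\ref{def:glauber}. Note that local finiteness alone does not make $\Pi(B)$ finite when $V$ is infinite. So I will either take $B$ to have finitely many vertex coordinates $\Lambda$ (as in the later two bullets), or use that $R_{\Pi(B)}$ is then defined coordinatewise as a finite composition. With this convention, $R_{\Pi(B)}=R_{u_n,\iota_n}\circ\cdots\circ R_{u_1,\iota_1}$ is a finite composition.

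\textbf{Step 1: monotonicity of $R_{\Pi(B)}$.} I first check that each $R_{u,\iota}$ is monotone for the coordinatewise order on $S^V$. Suppose $\alpha\preceq\alpha'$ coordinatewise. At $u$, the inequality $R_{u,\iota}(\alpha)_u\preceq R_{u,\iota}(\alpha')_u$ is exactly Property~\ref{prop:mono}. At any $v\neq u$, the local update property gives $R_{u,\iota}(\alpha)_v=\alpha_v\preceq\alpha'_v=R_{u,\iota}(\alpha')_v$. A composition of monotone maps is monotone, which proves the first bullet.

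\textbf{Step 2: monotonicity in $t$.} Let $t\le t'$ and write $\Pi(\Lambda\times[-t',0])$ as the earlier updates in $\Lambda\times[-t',-t)$ followed by those in $\Pi(\Lambda\times[-t,0])$. This ordering is possible because the enumeration is by time. Then
\[
R_{\Pi(\Lambda\times[-t',0])}(\alpha^+)=R_{\Pi(\Lambda\times[-t,0])}\big(\beta\big),
\qquad \beta:=R_{\Pi(\Lambda\times[-t',-t))}(\alpha^+)\preceq\alpha^+,
\]
where $\beta\preceq\alpha^+$ holds trivially since $\alpha^+$ is maximal. Step 1 then gives $R_{\Pi(\Lambda\times[-t',0])}(\alpha^+)\preceq R_{\Pi(\Lambda\times[-t,0])}(\alpha^+)$, so the $+$ chain is decreasing in $t$. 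The argument for $\alpha^-$ is symmetric.

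\textbf{Step 3: monotonicity in $\Lambda$.} This is the main point, since extra updates are now interleaved in time rather than prepended. Take $\Lambda\subset\Lambda'$ and compare the two compositions update by update along the ordered list of $\Pi(\Lambda'\times[-t,0])$. I will prove by induction on the update index that the $\Lambda'$-chain lies below the $\Lambda$-chain coordinatewise, starting from $\alpha^+=\alpha^+$.
\begin{itemize}
\item If the next update is at a vertex in $\Lambda$, both chains apply the same $R_{u,\iota}$, so the order is preserved by Step 1.
\item If the update is at a vertex $u\in\Lambda'\setminus\Lambda$, only the $\Lambda'$-chain moves. Its new value at $u$ lies in $S$ and hence below $\alpha^+_u$. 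The $\Lambda$-chain never updates $u$, so its value at $u$ stays $\alpha^+_u$, and the order at $u$ holds. The other coordinates are unchanged by the local update property.
\end{itemize}
The mirror argument, with $\alpha^-_u$ minimal, handles $\alpha^-$. The key observation is that vertices never updated in the smaller chain remain at the extremal value.

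\textbf{Final claim.} Every $\alpha\in S^V$ satisfies $\alpha^-\preceq\alpha\preceq\alpha^+$, so Step 1 sandwiches $R_{\Pi(B)}(\alpha)_u$ between the two extremal chains. Combining this with Steps 2--3 shows that enlarging $\Lambda$ or $t$ keeps the $+$ chain weakly decreasing and the $-$ chain weakly increasing at $u$, while preserving $-$chain $\preceq$ $+$chain by Step 1. Once the two values coincide at $u$, they are therefore pinned at that common value, and every $\alpha$ and every larger box produce the same value at $u$.
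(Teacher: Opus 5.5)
Your proof is correct and complete. The paper gives no proof of this theorem; it only records these properties as going back to Propp and Wilson. Your write-up therefore supplies the standard verification the paper omits:
\begin{itemize}
\item Step~1 combines single-site monotonicity with the local-update property and closes under composition.
\item Step~2 prepends the earlier updates, which can only lower $\alpha^+$.
\item The final claim follows from the sandwich $R^{\Lambda,-t}(\alpha^-)_u\preceq R^{\Lambda',-t'}(\alpha^-)_u\preceq R^{\Lambda',-t'}(\alpha)_u\preceq R^{\Lambda',-t'}(\alpha^+)_u\preceq R^{\Lambda,-t}(\alpha^+)_u$ together with antisymmetry of $\preceq$.
\end{itemize}

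The one place that goes beyond the usual time-only coupling-from-the-past argument is Step~3, and you handle it correctly. Updates at sites of $\Lambda'\setminus\Lambda$ are interleaved rather than prepended, so Step~2's trick does not apply. However, the smaller chain never touches those sites, so it stays at the extremal value there, and the coordinatewise comparison propagates by induction over the time-ordered list of updates.

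Your opening caveat about finiteness of $\Pi(B)$ is harmless but unnecessary in this setting. A bounded $B$ in a locally finite graph has only finitely many vertex coordinates, so local finiteness of $\Pi$ already makes $\Pi(B)$ a finite, time-ordered list.
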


If $\Pi$ is a random set, then we shall also write:
\begin{itemize}
    \item $R^{\Lambda,-t}:=R_{\Pi(\Lambda\times[-t,0])}$,
    \item $R^{\Lambda,-t,-s}:=R_{\Pi(\Lambda\times[-t,-s])}$.
\end{itemize}

In what follows, we shall use this construction (Glauber dynamics coupled through the coupling-from-the-past).
In particular, $\alpha^+$ (resp. $\alpha^-$) will refer to the configuration constant equal to $1$ (resp. $-1$) on $\Z^d$.

%% file: sections_new/SWM/03_spacetime.tex
\section{SWM: Space-time mixing}
\label{sec:spacetime}

The goal of this section is to prove that the mixing property stated in Lemma~\ref{lem:spatial_mixing_SWM} implies that the Glauber dynamics defined above mixes both in space and in time. 
Our main goal is to prove the following space-time mixing estimate, stating that with high probability, the maximal and minimal dynamics for $\alpha$ in the coupling previously constructed remain equal for a positive proportion of the available space-time when the boundary conditions are sufficiently far away in space-time. 
\begin{proposition}\label{prop:space_time_mixing}
There exists $\delta > 0$ such that:
\[
\P[ \bigcap_{-r \in [-\delta n, 0]}\{   R^{\Lambda_{2n}, -n, -r }(\alpha^+)|_{\Lambda_n} = R^{\Lambda_{2n}, -n, -r }(\alpha^-)|_{\Lambda_n}  \}] \underset{n\rightarrow \infty}{\longrightarrow} 1.
\]
\end{proposition}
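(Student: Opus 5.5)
The plan is a sandwich argument in the spirit of Harel--Spinka~\cite{harel_spinka}. We combine monotonicity (Theorem~\ref{thm:cftp}), spatial mixing of the Gibbs measure (Lemma~\ref{lem:spatial_mixing_SWM}), and the matching-digits property of the Glauber dynamic of Lemma~\ref{lem:glauber_SWM}.

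\textbf{Step 1 (sandwich between stationary chains).} Since updates only occur inside $\Lambda_{2n}$, spins outside keep their initial values. Hence $t\mapsto R^{\Lambda_{2n},-n,t}(\alpha^+)$ is a Glauber chain for $\mu^{+1}_{\Lambda_{2n},\beta}$, and $R^{\Lambda_{2n},-n,t}(\alpha^-)$ is one for $\mu^{-1}_{\Lambda_{2n},\beta}$. Using the same point process $\Pi$, we also run chains started at time $-n$ from samples of $\mu^{\pm1}_{\Lambda_{2n},\beta}$ that are monotonically coupled, with the $+$ sample dominating the $-$ sample. These chains are stationary. By monotonicity,
\[
R^{\Lambda_{2n},-n,t}(\alpha^+)\;\ge\;\text{(stationary $+$ chain)}\;\ge\;\text{(stationary $-$ chain)}\;\ge\;R^{\Lambda_{2n},-n,t}(\alpha^-)
\]
holds pointwise for all $t\in[-n,0]$. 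It then suffices to show two things:
\begin{enumerate}
\item the extremal chains coalesce with the stationary ones on $\Lambda_n$ during $[-n,-\delta n]$;
\item the two stationary chains agree on $\Lambda_n$ throughout $[-\delta n,0]$ with high probability.
\end{enumerate}

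\textbf{Step 2 (from real-valued discrepancy to exact agreement).} Fix $\epsilon$ small and let $k$ be as in Lemma~\ref{lem:glauber_SWM}. Consider the last update of a vertex $u$ before time $t$. On the matching event $M$, the two chains agree exactly at $u$ as soon as their first $k$ digits agree. Those first $k$ digits are produced by the quantile coupling of Lemma~\ref{lem:grand_coupling} applied to the measures in~\eqref{eq:nu_alpha}. These measures have densities bounded above and below, uniformly in the boundary values. Hence the probability that the first $k$ digits differ is at most $C_k$ times the sum over neighbours $v$ of the discrepancies at $v$ just before the update. Consequently
\[
\P[\text{disagreement at } u \text{ at time } t]\le \epsilon + C_{k}\,\E\Big[\sum\nolimits_{v\sim u}(\text{top}-\text{bottom})_v\Big].
\]

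\textbf{Step 3 (bootstrapping the disagreement bound).} The estimate of Step~2 with a fixed $\epsilon>0$ is too weak on its own. We therefore turn it into a percolation-type domination. A disagreement at $(u,t)$ can only survive if there is a space-time path of updates, going back in time, along which the matching event fails or the digits differ. With $\epsilon$ small and the discrepancy bound fed back in, such paths should be dominated by a subcritical oriented percolation in space-time. This yields exponential decay in the length of the path.

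For the stationary chains, the marginal at any time is exactly $\mu^{\pm1}_{\Lambda_{2n},\beta}$. Lemma~\ref{lem:spatial_mixing_SWM} then bounds the expected discrepancy on $\Lambda_{n}$ by $Ce^{-cn}$. More precisely, one applies it on slightly larger boxes around each $u\in\Lambda_n$ and uses the fact that a monotone coupling of $\mu^+\ge\mu^-$ has $\E[\text{top}-\text{bottom}]\le 2\|\cdot\|_{\mathrm{TV}}$ since spins lie in $[-1,1]$. A union bound over $u\in\Lambda_n$ and over the $O(n\cdot|\Lambda_n|)$ update times in $[-\delta n,0]$ then gives agreement throughout $[-\delta n,0]$. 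This union bound is affordable because the spatial-mixing error is exponentially small, while the number of update times is only polynomial in $n$.

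\textbf{Step 4 (main obstacle: coalescence in time).} The step I expect to be hardest is proving that the extremal chains reach the stationary ones within time $(1-\delta)n$. I would attack this via the percolation picture of Step~3 run forward in time. Let $\rho(t)$ denote the density of disagreements between the $\alpha^+$ chain and the stationary $+$ chain, at vertices lying at distance at least $L$ from $\partial\Lambda_{2n}$. Each update either matches (probability at least $1-\epsilon$) or propagates a disagreement to a neighbour. This should give a differential inequality $\rho'\le-(1-\epsilon C D)\rho$, up to boundary effects. Those boundary effects are controlled by the same spatial-mixing bound, since they enter through the boundary of $\Lambda_{2n}$, at distance $n$ from $\Lambda_n$. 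The resulting decay $\rho(t)\le e^{-c(t+n)}$ is then fast enough to beat the union bound over $\Lambda_n$.
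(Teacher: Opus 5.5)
\textbf{There is a genuine gap, and it sits in Step~4, where all the difficulty of the proposition lies.}

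Your architecture is sound and matches the paper's. The sandwich by monotonically coupled stationary chains is exactly the paper's splitting $\phi\le\phi^++\phi^-+10^k\|\cdot\|_{\mathrm{TV}}$ with the samples $\xi^\pm$. For the stationary pair, whose expected discrepancy is at most $2\,\mathrm{TV}$, your Steps~2--3 are a workable substitute for Lemma~\ref{lem:TV} plus Lemma~\ref{lem:tree_eps_perco}: the first $k$ digits depend Lipschitz-continuously on the neighbour average, and the backward tree of failed matching events does the rest.

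The problem is Step~4. You claim that each update produces agreement with probability at least $1-\epsilon$, but this contradicts your own Step~2. On the matching event $M$ only the digits after the $k$-th are equalised. The first $k$ digits differ with a probability you can only bound by $C_k\sum_{v\sim u}\Delta_v$, where $\Delta_v\ge 0$ is the real-valued discrepancy and $C_k\gtrsim 10^k$ blows up as $\epsilon\to0$. Initially one chain is $\alpha^+\equiv 1$ and the other is a Gibbs sample, so the discrepancies are of order one throughout $\Lambda_{2n}$. Nothing then makes $C_k\sum_v\Delta_v$ small. Consequently the disagreement density satisfies no closed linear inequality of the form $\rho'\le-(1-\epsilon CD)\rho$. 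To make the digit term small you would need a priori decay in time of the real-valued discrepancy, which is precisely the temporal mixing you are trying to prove.

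Two related points in Steps~1 and~4 also need fixing:
\begin{itemize}
\item The boundary-effects remark is misplaced. The $\alpha^+$ chain and the stationary $+$ chain share their boundary condition, so the only source of discrepancy is the initial one.
\item Exact agreement is not absorbing forward in time. You therefore need agreement at all times of $[-\delta n,0]$, not merely coalescence before time $-\delta n$.
\end{itemize}

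\textbf{How the paper closes this step.} It uses the Harel--Spinka renormalisation rather than a contraction estimate.
\begin{itemize}
\item It works with the $k$-digit non-coupling probability $\Psi(n)=\P[\lfloor\mathsf{NC}(n,n)\rfloor_k]$. The truncated spin space is finite, so Lemma~\ref{lem:TV} applies.
\item It splits the time interval into two independent Poisson pieces and decomposes on the event $\calE$ that, after the first piece, the extremal and stationary chains already agree on $\Lambda_m$.
\item On $\calE$, spatial mixing at scale $m$ gives an error $10^k e^{-cm}$.
\item Off $\calE$, one has $\P[\calE^c]\le m^d\,\P[\mathsf{NC}(n,n)]$. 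Lemma~\ref{lem:tree_eps_perco} turns this into $e^{-c_\epsilon\kappa n}+C_\epsilon\Psi((1-\kappa)n)$, which is then multiplied by the non-coupling probability over the second piece.
\item The result is the quadratic recursion $\Psi(2n)\le P(m)\Psi((1-\kappa)n)^2+Ce^{-cm}$. By Lemma~\ref{lem:tool_superpolynomial_decay} this gives superpolynomial decay (Proposition~\ref{prop:subpolynomial_decay_coupling}).
\item Superpolynomial decay is enough for the union bounds over $\Lambda_n$ and over the Poisson number of resamplings (Lemma~\ref{lem:coupling_long_time}).
\end{itemize}
This quadratic bootstrap is the idea missing from your proposal.

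For the square well model alone, another repair seems possible. Control $\E[\Delta_v]$ directly: for ordered chains, an update at $u$ replaces $\Delta_u$ in expectation by $g(m_{\mathrm{top}})-g(m_{\mathrm{bot}})$. Here $g(m)$ is the mean of the truncated Gaussian in~\eqref{eq:nu_alpha} with mean parameter $m$. Since $g'(m)=2\beta D$ times its variance, and truncation strictly reduces the variance, $g'$ is bounded by some $\gamma<1$. This would give exponential decay in time of $\E[\Delta_v]$, which your Steps~2--3 would then upgrade to exact agreement. That is a model-specific contraction argument your proposal does not contain, and it is unclear whether anything like it is available for the XY model.
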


We divide the proof of this statement in two subsections. 
The first one is dedicated to reducing the proof to a key statement, namely Proposition~\ref{prop:subpolynomial_decay_coupling}.
The second subsection adapts an argument due to Harel and Spinka~\cite{harel_spinka}, which itself is generalisation of an original argument of~\cite{martinelli_olivieri}, to provide a proof of this key statement.

\subsection{Reduction to Proposition~\ref{prop:subpolynomial_decay_coupling}}

We start by defining some ``coupling events'' that we shall appeal to reapeteadly in what follows. 

\begin{definition}
For any $n \geq 0$ and $-t < -s < 0$, define the following coupling events.
\begin{multline*} \mathsf{C}(n,t,s) := \{ R^{\Lambda_{n}, -t, -s}(\alpha^+)_0 = R^{\Lambda_n, -t, -s}(\alpha^-)_0  \} \\ \text{ and }~\mathsf{C}(n,t,\geq s) := \bigcap_{-r \in [-s, 0]} \mathsf{C}(n,t,r), \end{multline*}
and 
\begin{multline*} \lfloor \mathsf{C}(n,t,s) \rfloor_k := \{ \lfloor R^{\Lambda_n(v), -t, -s}(\alpha^+)_0\rfloor_k = \lfloor R^{\Lambda_n(v), -t, -s}(\alpha^-)_0 \rfloor_k  \} \\ \text{ and }~\lfloor\mathsf{C}(n,t,\geq s)\rfloor_k := \bigcap_{-r \in [-s, 0]}  \lfloor\mathsf{C}(n,t,r)\rfloor_k . \end{multline*}

Finally, when $s = 0$, we remove it from the notation \emph{e.g.} we write:
\[  \mathsf{C}(n,t,0) := \mathsf{C}(n,t).  \]

Observe that those events are the ones appearing in Theorem~\ref{thm:cftp}, which explains their name of ``coupling events''.
We will also name their complementary events by replacing $\mathsf{C}$ by $\mathsf{NC}$ (``non-coupling'') for the four events just defined.  
\end{definition}

We first state the key statement that will allow us to prove Proposition~\ref{prop:space_time_mixing}.

\begin{proposition}\label{prop:subpolynomial_decay_coupling}
There exists $\delta > 0$ such that for any $n \geq 0$, and $v \in \Z^d$ and any exponent $\beta > 0$, it is the case that 
\[
n^\beta \P[\mathsf{NC}(n, n(1-\delta))] \underset{n\rightarrow \infty}{\longrightarrow} 0. 
\]
\end{proposition}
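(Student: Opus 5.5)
We follow the Martinelli--Olivieri / Harel--Spinka bootstrap. Write $p(n,t):=\P[\mathsf{NC}(n,t)]$. By Theorem~\ref{thm:cftp} (monotonicity in $\Lambda$ and $t$) and stationarity of the Poisson clock in time, $p(n,t)$ is nonincreasing in both $n$ and $t$. By the union bound over the $|\Lambda_m|$ sites, the probability that the top and bottom dynamics disagree somewhere on $\Lambda_m$ is at most $|\Lambda_m|$ times the single-site bound. Here we use that $\Lambda_n(v)\supset \Lambda_{n-|v|_\infty}$ around $v$, together with translation invariance.

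The plan is to prove a recursive inequality of the form
\[
p(2n, 2t) \le C n^{2d} p(n,t)^2 + Ce^{-cn} + Ce^{-ct},
\]
and then iterate it from a good starting scale. Iterating such a recursion from any scale where $Cn^{2d}p(n,t)$ is small gives superpolynomial (indeed stretched-exponential) decay along scales $n_j = 2^j n_0$. Monotonicity then interpolates to all $n$ with $t=n(1-\delta)$, which proves the claimed $n^\beta$ bound for every $\beta$.

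\textbf{The recursion step.} At the origin, run the dynamics on $\Lambda_{2n}$ from time $-2t$. We split the time interval into $[-2t,-t]$ and $[-t,0]$.

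First, during $[-2t,-t]$, with probability at least $1-C n^{d} p(n,t)$ each site of $\Lambda_n$ has coupled top and bottom configurations. This follows because the subbox $\Lambda_n(v)\subset\Lambda_{2n}$ around each $v\in\Lambda_n$ couples by the monotone sandwich, so the coupled value is the same for any boundary data outside it.

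Second, on $[-t,0]$ we compare with stationarity. The dynamics on $\Lambda_{2n}$ started from the maximal (resp. minimal) configuration at time $-t$ dominates (resp. is dominated by) the dynamics with boundary $\pm1$. If $t$ exceeds the CFTP coalescence time, these laws are close to $\mu^{\pm1}_{\Lambda_{2n}}$. Lemma~\ref{lem:spatial_mixing_SWM} then says the two boundary conditions are indistinguishable on $\Lambda_n$ up to $Ce^{-cn}$.

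Because the update is monotone and the top and bottom chains are ordered, equality in law of the marginals plus ordering forces almost sure equality. For real-valued ordered variables, $X^+\ge X^-$ together with $\E[f(X^+)]\approx\E[f(X^-)]$ for increasing bounded $f$ yields $\P[X^+\neq X^-]$ small only after discretisation. This is where Property~\ref{prop:digits} enters.

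\textbf{Handling continuous spins.} We first control $\lfloor\mathsf{NC}\rfloor_k$ using TV closeness. The monotone ordering and the fact that $\lfloor\cdot\rfloor_k$ takes $O(10^k)$ values give
\[
\P[\lfloor X^+\rfloor_k\neq\lfloor X^-\rfloor_k]\le \sum_j \big(\P[X^+\ge j10^{-k}]-\P[X^-\ge j10^{-k}]\big)\le 2\cdot 10^k\|\cdot\|_{TV}.
\]
Then, at the last update of the origin before time $0$, the matching event $M$ occurs with probability $1-\epsilon$. It is independent of the coarse digits, so agreement of the first $k$ digits of the inputs implies exact equality afterwards. One must make sure that the coarse digits of the update agree. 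This requires the neighbours to be coupled at the coarse level, which we also get from the union bound on neighbours. Hence
\[
p \le \epsilon + C10^k(\ldots).
\]
To beat a fixed $\epsilon$, we use disjoint time windows. Over the $\delta n$-length window, the origin is updated $\asymp\delta n$ times, and after each of the recursion events one fresh independent attempt occurs. This makes the $\epsilon$ contribution $\epsilon^{c n}$, which is negligible.

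\textbf{Initial scale and main obstacle.} The genuinely delicate step is producing the starting scale. We need $n_0$ with $Cn_0^{2d}p(n_0,t_0)<1/2$, and we need the errors to respect the constraint $t\approx n(1-\delta)$ (time shorter than space). Harel--Spinka obtain this from the spatial mixing by a contradiction argument. If $p(n,n)$ did not decay superpolynomially, the discrepancy would propagate and contradict Lemma~\ref{lem:spatial_mixing_SWM} through the monotone coupling identity
\[
\E[R^+_0-R^-_0]\to \mu^{+1}[\alpha_0]-\mu^{-1}[\alpha_0]
\]
as $t\to\infty$. I expect adapting this to the digit-matching setting, and keeping track of the time budget $n(1-\delta)$ through the iterations, to be the main obstacle.
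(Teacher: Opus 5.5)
Your overall strategy (Harel--Spinka bootstrap, truncation to $k$ digits, matching events) is the paper's, but the two steps that carry the argument are not established.

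\textbf{The recursion step gives no quadratic term.} Knowing that the top and bottom chains agree on $\Lambda_n$ at time $-t$ says nothing about their common value. So over $[-t,0]$ you can only bound the residual disagreement at the origin by $p(n,t)$ again, and nothing is gained; indeed $p(2n,2t)\le p(n,t)$ already holds trivially by monotonicity. To bring in Lemma~\ref{lem:spatial_mixing_SWM}, you assert that the laws at time $0$ are close to $\mu^{\pm1}_{\Lambda_{2n}}$ ``if $t$ exceeds the CFTP coalescence time''. That is exactly the temporal mixing being proved, so the step is circular.

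The missing idea is to compare each extremal chain with a chain started from ordered stationary samples $\xi^-\le\xi^+$, $\xi^\pm\sim\mu^\pm$. One writes $\phi\le\phi^{+}+\phi^{-}+10^{k}\|\cdot\|_{\operatorname{TV}}$. For $\phi^{+}$, one splits on the event $\calE$ that the chains from $\alpha^+$ and from $\xi^+$ agree on $\Lambda_m$ after the first time window.
\begin{itemize}
\item On $\calE$, the top chain coincides on $\Lambda_m$ with a \emph{stationary} chain. The remaining disagreement is then a spatial-mixing error $10^{k}e^{-cm}$, via Lemma~\ref{lem:TV}.
\item $\calE^c$ has probability at most $m^d\P[\mathsf{NC}(n,n)]$. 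Independence of the Poisson clocks in the two windows then supplies the second factor $\phi(n+m,s)$.
\end{itemize}
This is where the product comes from.

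\textbf{The handling of continuous spins is incorrect.} Only the last update of the origin before time $0$ determines its value. When $M$ fails at that update, all digits are redrawn as a function of the neighbours' \emph{full} values, so earlier matched updates are simply overwritten. Repeated updates of the origin are therefore not independent fresh attempts and do not yield $\epsilon^{cn}$. Nor does coarse agreement of the neighbours imply coarse agreement of the update.

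What works is the matching tree of Lemma~\ref{lem:tree_eps_perco}. At each unmatched update one branches to the previous updates of all neighbours; this is where the Markov property is used. For small $\epsilon$ this is a subcritical branching process. If the tree stays in $\Lambda_{\delta n}\times(-\delta n,0]$ and the first $k$ digits agree at every leaf, full coupling follows. This gives
$\P[\mathsf{NC}(n,n)]\le e^{-c_\epsilon n}+C_\epsilon\P[\lfloor\mathsf{NC}((1-\delta)n,(1-\delta)n)\rfloor_k]$.

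The paper uses this \emph{inside} the recursion: it bounds the full-value event $\calE^c$ by truncated non-coupling with a buffer $\kappa$. The inequality then closes on $\Psi(n)=\P[\lfloor\mathsf{NC}(n,n)\rfloor_k]$, whose finite range is what makes Lemma~\ref{lem:TV} usable. Lemma~\ref{lem:tool_superpolynomial_decay}, with its free parameter $m\le n$, then gives superpolynomial decay. The base case enters only through that lemma's hypothesis $\Psi\to0$.

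\textbf{The starting scale.} The time budget you flag as the main obstacle is not one. Non-coupling is monotone in the box, so $p(n,(1-\delta)n)\le p(n',Kn')$ with $n'=(1-\delta)n/K$, and decay along any fixed ray $t=Kn$ suffices. A starting scale can be produced in a fixed box by letting the time tend to infinity. There, Lemma~\ref{lem:TV} and Lemma~\ref{lem:spatial_mixing_SWM} make the truncated non-coupling $O(10^{k}e^{-cn})$.
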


The goal of the remainder of this subsection is to derive Proposition~\ref{prop:space_time_mixing} from Proposition~\ref{prop:subpolynomial_decay_coupling}, that is to prove that a small fixed time probability of non-coupling implies a small probability of non-coupling during a time interval.
As Proposition~\ref{prop:subpolynomial_decay_coupling} implies a subpolynomial decay of the fixed time probability of non-coupling, we can be quite rough and perform a straightforward union bound, as explained in the following lemma. 

\begin{lemma}\label{lem:coupling_long_time}
There exist $c > 0$ and $\delta > 0$ such that for any $n \geq 0$, 
\[
\P[ \mathsf{NC}(n,n,\geq \delta n) ] \leq 100 \delta n  \P[ \mathsf{NC}(n,n(1-\delta)) ] + \exp(- cn). 
\]
The same result holds when replacing $\mathsf{NC}(n,n,\geq \delta n)$ (resp. $ \P[ \mathsf{NC}(n,n, \delta n) ] $) by $\lfloor\mathsf{NC}(n,n,\delta n)\rfloor_k$ (resp. $\lfloor\mathsf{NC}(n,n,\geq \delta n)\rfloor_k$).
\end{lemma}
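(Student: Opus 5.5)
Our plan is to discretise the time interval $[-\delta n,0]$ and use a union bound over the grid points, while controlling the behaviour between consecutive grid points through the Poisson clock. The event $\mathsf{NC}(n,n,\geq\delta n)$ says that for some $r\in[0,\delta n]$ the two dynamics $R^{\Lambda_n,-n,-r}(\alpha^\pm)_0$ differ. Note that the map $r\mapsto R^{\Lambda_n,-n,-r}(\alpha^\pm)$ is piecewise constant. It changes only when $-r$ crosses a point of $\Pi(\Lambda_n\times[-n,0])$. Moreover, the value at the origin changes only when $-r$ crosses an update time at the origin itself. Thus the set of distinct values of $R^{\Lambda_n,-n,-r}(\alpha^\pm)_0$ for $r\in[0,\delta n]$ is indexed by the update times of vertex $0$ in $[-\delta n,0]$, together with the value at $r=\delta n$.

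The key steps are as follows. First, let $N$ be the number of Poisson points at $0$ in the window $[-\delta n,0]$, which is Poisson with mean $\delta n$ (rate one per vertex). A standard Poisson large deviation gives $\P[N\geq 50\delta n]\leq e^{-cn}$; this is the exponential error term. Second, on the event $N<50\delta n$, non-coupling at some $r$ implies non-coupling at one of at most $N+1$ specific times. These are the times just after each update of $0$ (and the endpoint), and we write them as $-r_1,\dots,-r_{N+1}$. Third, we bound the probability of non-coupling at such a random time $r_i$. By translation invariance in time of the Poisson process, the configuration $R^{\Lambda_n,-n,-r_i}$ has the same law as $R^{\Lambda_n,-(n-r_i),0}$, which is a dynamic run for time $n-r_i\geq n(1-\delta)$. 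By the monotonicity part of Theorem~\ref{thm:cftp}, longer runs only increase the chance of coupling, so the probability of non-coupling at time $r_i$ is at most $\P[\mathsf{NC}(n,n(1-\delta))]$.

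The main obstacle is that the $r_i$ are random and not independent of the dynamics, so the time-shift argument must be justified. One option is to condition on the ordered points of the process at $0$ and argue by the strong Markov / stationarity property that the $i$-th update time is a stopping time for the process read backwards from $0$. Alternatively, and more simply, we replace the random times by a deterministic grid of mesh $1/10$ on $[0,\delta n]$, which has $10\delta n$ points. We then bound separately the probability that some grid cell contains two updates at the origin, or an update at the origin such that the values straddling it differ. We would attempt the cleaner version: use Mecke's formula (the Slivnyak–Mecke theorem for Poisson processes), which gives
\[
\E\Big[\sum_{t\in\Pi_0\cap[-\delta n,0]} \mathbf 1[\text{non-coupling at }t]\Big]=\int_{0}^{\delta n}\P[\mathsf{NC}\text{ at }r\text{ with an added point at }(0,r)]\,\diff r .
\]
Adding an update at $0$ at time $-r$ and then looking just after it is dominated by the run of length $n-r$. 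Indeed, by Theorem~\ref{thm:cftp} the extra update is applied to already ordered configurations $\alpha^+\geq\alpha^-$, so a coupled pair stays coupled, and in any case non-coupling after the update implies non-coupling of the neighbours or digits. This handles the factor $100\delta n$ with room to spare. The $\lfloor\cdot\rfloor_k$ version is identical, since it uses only monotonicity and the same piecewise-constant structure.
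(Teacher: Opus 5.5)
Your route is the paper's. The value at $0$ only changes at update times of $0$, and there are Poisson$(\delta n)$ many of them in $[-\delta n,0]$. Each is charged $\P[\mathsf{NC}(n,(1-\delta)n)]$ by monotonicity in time, and a union bound plus a Chernoff bound finish. The paper states the per-update bound in one line, and you rightly flag that the update times are random. Mecke's formula (equivalently, the stopping-time property of the backward update times) does reduce the union bound to $\int_{-\delta n}^0 g(u)\,\diff u$. Here $g(u)$ is the probability that $R_{0,\iota}$, with an independent mark $\iota$, applied to $R_{\Pi(\Lambda_n\times[-n,u))}(\alpha^\pm)$ gives different values at $0$. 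This even makes the Chernoff step unnecessary.

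The gap is your bound on $g(u)$. That the extra update preserves $\alpha^+\ge\alpha^-$ says nothing about disagreement at $0$: configurations agreeing at $0$ can disagree there after a further update at $0$ if a neighbour disagrees. Passing to the neighbours instead uses the exact Markov property. It bounds $g(u)$ by $\sum_{v\sim 0}\P[\mathsf{NC}_v(\cdot)]$ at off-centre sites, which is not the quantity in the lemma. Moreover, the paper later reuses this lemma for the XY dynamic, where only an almost-Markov property holds.

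What closes the step is a last-update decomposition at a deterministic time $u$. Split according to the last update of $0$ before $u$, at time $u-s$, and use independence of $\Pi$ on disjoint sets:
\[
\P[\mathsf{NC}(n,n,-u)]=\int_0^{n+u}e^{-s}\,g(u-s)\,\diff s+e^{-(n+u)}.
\]
Applying the monotone map $R_{0,\iota}$ to the sandwich of Theorem~\ref{thm:cftp} shows that $g$ is nonincreasing in the run length, so $g(u-s)\ge g(u)$. Hence
\[
\P[\mathsf{NC}(n,n,-u)]\ge(1-e^{-(n+u)})\,g(u)+e^{-(n+u)}\ge g(u),
\]
and so $g(u)\le\P[\mathsf{NC}(n,(1-\delta)n)]$ for $u\in[-\delta n,0]$. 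Your Mecke bound then gives $(1+\delta n)\,\P[\mathsf{NC}(n,(1-\delta)n)]$. The $\lfloor\cdot\rfloor_k$ version follows identically, since $\lfloor\cdot\rfloor_k$ is nondecreasing. Your deterministic-grid alternative would not close: with mesh $1/10$, some cell contains two updates at $0$ with probability tending to one.
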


\begin{proof}
In the time interval $[-\delta n, 0]$, the spin at 0 is resampled $\mathsf P$ times, where $\mathsf P$ is a Poisson variable of expectation $\delta n$. 
Each time that the spin at 0 is resampled, the probability of the non-coupling event is bounded by $\P[ \mathsf{NC}(n,n,\delta n) ] = \P[\mathsf{NC}(n,(1-\delta)n)]$ due to the monotonicity of the dynamics in time. 
The result follows by a union bound together with a classical Chernoff-type bound for the tails of Poisson variables. 
The proof also applies in the case of the coupling of the $k$-th first digits of $\alpha$. 
\end{proof}

We now explain how the proof of Proposition~\ref{prop:space_time_mixing} follows from Proposition~\ref{prop:subpolynomial_decay_coupling}.
This is nothing but a quite crude union bound.

\begin{proof}[Proof of Proposition~\ref{prop:space_time_mixing}]
Let $\delta$ be given by Proposition~\ref{prop:subpolynomial_decay_coupling}. 
Write 
\begin{multline*}
\P[\big\{ \bigcap_{-r \in [-\delta n, 0]}\{   R^{\Lambda_{2n}, -n, -r }(\alpha^+)|_{\Lambda_n} = R^{\Lambda_{2n}, -2n, -r }(\alpha^-)|_{\Lambda_n}  \} \big\}^c] 
\\ \leq n^d \sup_{v \in \Lambda_{n}}\P[ \mathsf{NC}_v(2n, n, \geq \delta n )] 
\leq n^d \P[\mathsf{NC}(n,n,\geq \delta n)]]
 \\ \leq n^{d}(100\delta n \P[ \mathsf{NC}(n, n(1-\delta) )] + \exp(-c\delta n)).
\end{multline*}
We used a union bound for the first inequality, monotonicity in space for the second inequality, and Lemma~\ref{lem:coupling_long_time} at the third line.
 By Proposition~\ref{prop:subpolynomial_decay_coupling}, this quantity tends to 0, which concludes the proof. 
\end{proof}

\subsection{The Harel-Spinka type argument}
We turn to the proof of Proposition~\ref{prop:subpolynomial_decay_coupling}.
Similarly to~\cite{harel_spinka}, our strategy is to provide a renormalization inequality for the coupling probability of the maximal and minimal dynamics. 
However, the task is complicated by the infiniteness of the spin space, and the proof crucially relies on the ``$(\eps, k)$-matching of the digits'' property satisfied by the dynamic. 

We will need three preparatory lemmas. 
The first two ones are a standard fact about disagreement probabilities of ordered random variables, and a
 basic calculus estimate that we shall use to establish the superpolynomial decay of $\P[\mathsf{NC}(n, n(1-\delta))]$.
 
\begin{lemma}[Lemma 15,~\cite{harel_spinka}]\label{lem:TV}
Let $X$ and $Y$ be two random variables taking values in a totally ordered and finite spin space $S$, such that $X \leq Y$ almost surely. Then,
\[ \P[X \neq Y] \leq (|S| -1)\Vert X-Y \Vert_{\mathrm{TV}}. \] 
\end{lemma}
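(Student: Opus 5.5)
The plan is a layer-cake decomposition of the event $\{X\neq Y\}$ along the total order, followed by a comparison of each layer with a total variation bound. Enumerate $S=\{s_1<s_2<\dots<s_m\}$ with $m=|S|$. Since $X\leq Y$ almost surely, the event $\{X\neq Y\}$ is the same as $\{X<Y\}$. If $X<Y$, then there is an index $i\in\{1,\dots,m-1\}$ with $X\leq s_i<Y$; for instance, take $i$ so that $s_i=X$. Hence
\[
  \{X\neq Y\}\subset\bigcup_{i=1}^{m-1}\{X\leq s_i<Y\},
\]
and a union bound gives $\P[X\neq Y]\leq\sum_{i=1}^{m-1}\P[X\leq s_i<Y]$.

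Next we bound each summand by the total variation distance between the laws of $X$ and $Y$. Fix $i$. Because $X\leq Y$, we have $\{Y\leq s_i\}\subset\{X\leq s_i\}$, so
\[
  \P[X\leq s_i<Y]=\P[X\leq s_i]-\P[Y\leq s_i].
\]
This is the difference of the probabilities of the same event $(-\infty,s_i]$ under the two laws. It is therefore at most $\Vert X-Y\Vert_{\mathrm{TV}}$, where the total variation distance is understood as $\sup_A|\P[X\in A]-\P[Y\in A]|$. Summing over the $m-1$ indices gives the claimed bound $(|S|-1)\Vert X-Y\Vert_{\mathrm{TV}}$.

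No step here is a genuine obstacle. The only point needing care is the normalisation convention for $\Vert\cdot\Vert_{\mathrm{TV}}$: with the supremum-over-events convention the constant is exactly $|S|-1$, while the $\ell^1$ convention would only make the bound weaker. A remark worth recording is that the same argument works verbatim for real-valued $X\leq Y$ once we restrict to the finitely many thresholds $\lfloor\cdot\rfloor_k$. This is how I expect the lemma to be used later, namely applied to the first $k$ digits of the maximal and minimal dynamics, where the digit space $\{\lfloor x\rfloor_k : x\in[-1,1]\}$ is finite of size about $2\cdot10^k+1$.
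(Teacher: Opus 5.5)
Your argument is correct. Since $\{Y\le s_i\}\subset\{X\le s_i\}$, each layer satisfies $\P[X\le s_i<Y]=\P[X\le s_i]-\P[Y\le s_i]\le\Vert X-Y\Vert_{\mathrm{TV}}$, and summing over the $|S|-1$ non-maximal thresholds gives the claim. The paper does not reprove this lemma but only cites Harel--Spinka, and this threshold (layer-cake) decomposition is the standard argument behind that citation.
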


\begin{lemma}\label{lem:tool_superpolynomial_decay}
Let $(\Psi(n))_{n\geq 0}$ be a positive sequence that we assume monotone decreasing to zero.
Assume that there exists $\kappa > 0$, $C,c > 0$, and a polynomial $P$ such that for any $1 \leq m \leq n$, 
\begin{equation}\label{equ:lem_renormalisation}
\Psi(2n) \leq P(m)\Psi((1-\kappa)n)^2 + C\e^{-cm}. 
\end{equation}
Then, $\Psi(n)$ tends superpolynomially fast to 0, that is, for any $A>0$, 
\[n^A \Psi(n) \underset{n\rightarrow \infty}{\longrightarrow} 0. \]
\end{lemma}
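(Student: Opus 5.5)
Write $\rho:=(1-\kappa)/2\in(0,\tfrac12)$. Then \eqref{equ:lem_renormalisation} reads $\Psi(2n)\le P(m)\Psi(2\rho n)^2+Ce^{-cm}$, so a scale $N$ is controlled by the square of the value at the smaller scale $\rho N$. The plan is to iterate this along a geometric sequence of scales and choose the auxiliary parameter $m$ at each step as a small multiple of a logarithm of the scale. The exponential error $Ce^{-cm}$ then equals a large negative power of the scale, which is harmless. The squaring makes the exponent of the polynomial decay grow geometrically along the iteration.

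\emph{Setup.} Fix a target exponent $A$ and work with $a_j:=\Psi(N_j)$, where $N_{j+1}=N_j/\rho$. We may also take $N_j$ even and ignore rounding, because $\Psi$ is monotone. Applying \eqref{equ:lem_renormalisation} with $n=N_{j+1}/2$ gives
\[
a_{j+1}\le P(m_j)\,a_j^2+Ce^{-cm_j}.
\]
Assume $P(m)\le K m^{D}$ for $m\ge1$. We set up an induction hypothesis of the form $a_j\le N_j^{-\gamma_j}$, with $\gamma_j$ increasing. Take $m_j:=\lceil (2\gamma_j+1)\log N_{j+1}/c\rceil$, which is admissible since $m_j\le N_{j+1}/2$ for large $N_j$. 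With this choice the second term is at most $C N_{j+1}^{-2\gamma_j-1}$. The first term is at most
\[
K(C'\gamma_j\log N_{j+1})^{D} N_j^{-2\gamma_j}=K(C'\gamma_j\log N_{j+1})^{D}\rho^{-2\gamma_j}N_{j+1}^{-2\gamma_j}.
\]

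\emph{Seeding the induction.} Because $\Psi\to0$, the iteration can be started. First pick $\gamma_0$ small, say $\gamma_0=1$, and then choose $N_0$ large enough that $\Psi(N_0)\le N_0^{-\gamma_0}$. The latter is possible only if $\Psi$ is already small at that scale, so it is cleaner to begin from the bound $a_0\le\eta$ for a small $\eta$. With $\eta$ small the recursion contracts: $a_{j+1}\lesssim(\log N_{j+1})^D a_j^2$ together with a tiny error term. From this $a_j\le N_j^{-1}$ follows after finitely many steps. We then update $\gamma_{j+1}=\tfrac32\gamma_j$, for example, as long as $\gamma_j$ remains below $A+1$, and afterwards keep $\gamma$ fixed at $A+1$.

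\emph{Main obstacle.} The induction step has to absorb the factors $\rho^{-2\gamma_j}$ and $(\log N)^D$ into the gain from squaring. The gain is $N_{j+1}^{-2\gamma_j}$ against the target $N_{j+1}^{-\gamma_{j+1}}$, with $\gamma_{j+1}\le\tfrac32\gamma_j$. This leaves a spare factor $N_{j+1}^{-\gamma_j/2}$, which beats $\rho^{-2\gamma_j}(\log N)^D$ once $N_{j+1}^{1/2}\ge\rho^{-2}$ and $N$ is large. Since $\gamma_j$ is bounded by $A+1$, one uniform large-$N_0$ choice suffices.

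\emph{Conclusion.} Finally, monotonicity interpolates between consecutive scales: for $N_j\le n<N_{j+1}$ we have $\Psi(n)\le\Psi(N_j)\le N_j^{-A-1}\le\rho^{-A-1}n^{-A-1}$. Hence $n^A\Psi(n)\to0$.
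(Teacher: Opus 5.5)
Your bootstrap from $\Psi(N_j)\le N_j^{-\gamma_j}$ with $\gamma_j\in[1,A+1]$ to the next scale is correct, and so is the final interpolation by monotonicity. The gap is in the seeding step, which is where the real difficulty of the lemma lies.

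You take $m_j\gtrsim\log N_{j+1}$, through the ``$+1$'' in $2\gamma_j+1$ and the choice $\gamma_0=1$. So the recursion you use at the start is $a_{j+1}\lesssim(\log N_{j+1})^D a_j^2$ plus an error, and it contracts only if $\eta\,(\log N_1)^D\ll 1$. But $N_0$ is chosen \emph{after} $\eta$, as a scale with $\Psi(N_0)\le\eta$. The hypothesis $\Psi\to 0$ says nothing about how large that scale must be. Asking for some $N_0$ with $\Psi(N_0)(\log N_0)^D$ small is already a quantitative decay statement of the kind you are trying to prove. The choices of $\eta$ and $N_0$ are therefore circular. At the first scales the squaring gains only a factor $\eta$, which need not beat $(\log N_1)^D$, so ``$a_j\le N_j^{-1}$ after finitely many steps'' is not justified.

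The missing idea is to tie $m$ to the current size of $\Psi$ rather than to the scale. Let $\bar a_j$ be your current bound on $\Psi(N_j)$, put $b_j:=\log(1/\bar a_j)$, and take $m_j:=\lceil 2b_j/c\rceil$. Then $Ce^{-cm_j}\le C\bar a_j^2$ and $P(m_j)\le K(2b_j/c+1)^D$. Hence you may set $b_{j+1}:=2b_j-\log\bigl(K(2b_j/c+1)^D+C\bigr)$. This satisfies $b_{j+1}\ge\tfrac32 b_j$ as soon as $b_j$ exceeds a constant depending only on $K,D,c,C$, and not on $N_0$. So ``$\Psi(N_0)$ small'' and ``$N_0$ large'' become two compatible largeness conditions on $N_0$.

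Now $b_j$ grows geometrically, while $\log N_j=\log N_0+j\log(1/\rho)$ grows only linearly. So $\Psi(N_j)\le N_j^{-1}$ after finitely many steps, and your induction can take over. If the admissibility condition $m_j\le N_{j+1}/2$ fails at some step, then already $\Psi(N_j)\le e^{-cN_{j+1}/4}$, which is far below $N_j^{-1}$.

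This is exactly the paper's argument: it chooses $m$ proportional to $a_n=-\log\Psi(n)$ at every step. That gives $a_{\alpha n}\ge(2-\epsilon')a_n-O(1)$, hence stretched-exponential decay directly. Your separate polynomial-exponent bootstrap is then unnecessary, though it remains a valid way to finish once the seeding is repaired.
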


\begin{proof}
We basically repeat the argument of~\cite[Lemma 17]{harel_spinka}.
Call $\alpha = \tfrac{2}{1-\kappa}$.
We first rewrite the hypothesis as 
\[
\Psi(\alpha n) \leq P(m)\Psi(n)^2 + C\e^{-cm},
\]
valid for any $(1-\kappa)m \leq n$. 
By naming $a_n := - \log\Psi(n)$, we observe that 
\begin{equation}\label{equ:proof_computation_superpolynomial} a_{\alpha n} \geq 2cm - \log C \text{ for any }  m \leq n \text{ such that } 
cm + \log P(m) \leq 2a_n.  \end{equation}

Now observe that either $a_n \geq c(1-\kappa)^{-1}n/2$ for infinitely many values of $n$, either any solution of $cm + \log P(m) $ satisfies $(1-\kappa)m \leq n$. 
Observe that the former case implies exponential decay for $\Psi$; we focus on the latter. 
For any $\eps > 0$, observe that $cm + \log P(m) \leq 2x$ for all $m \leq (2/c -\eps)x$, and large $x$.
As $a_n$ tends to $+\infty$ by assumption, evaluate the latter expression in $x=a_n$  and use~\eqref{equ:proof_computation_superpolynomial} to obtain $a_{\alpha n} \geq c(2/c - \eps)a_n - \log C \geq (2-c\eps - \eps)a_n$.
We conclude that 
\[\lim a_{\alpha^n}2^{-\delta n}=\infty \qquad \text{for any } \delta \in (0,1). \]
It is clear that in that case, $\Psi(n)$ tends to 0 faster than any polynomial. 
\end{proof}

The third preparatory lemma states that, up to reducing linearly the time-space window, in the Glauber dynamics the probability of coupling only the first $k$ digits is comparable to the probability of coupling all the digits.  
This lemma would be false if we were to consider the ``classical'' Glauber dynamics for $\alpha$: we crucially use the property that our dynamics has the ``matching of the digits'' property. 

\begin{lemma}\label{lem:tree_eps_perco}
For  $\eps > 0$ small enough, for any $\delta \in (0,1)$, there exist $c_\eps, C_\eps > 0$ and $k \geq 0$ such that 
\begin{itemize}
\item The dynamics has the $(\eps, k)$-matching of the digits property.
\item For any $n\geq 0$, 
\[
\P[ \mathsf{NC}(n,n) ] \leq \exp(-c_\eps n) + C_\eps\P[\lfloor \mathsf{NC}((1-\delta)n,(1-\delta)n) \rfloor_k].
\] 
\end{itemize}
\end{lemma}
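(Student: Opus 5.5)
The plan is to trace a disagreement at the origin backwards through space-time. We follow updates at which the matching event fails, until we reach either an update where the first $k$ digits already disagree, or the boundary of the window. The first case costs a factor of order $\P[\lfloor \mathsf{NC}((1-\delta)n,(1-\delta)n)\rfloor_k]$. The second requires a long chain of non-matching updates, which costs $\exp(-c_\eps n)$ when $\eps$ is small compared to $1/(2d)$.

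First I fix $\eps<1/(4d)$ (or smaller, to be tuned against the Poisson path-counting below). Lemma~\ref{lem:glauber_SWM} then gives $k$ and a dynamic with Properties~\ref{prop:mono}--\ref{prop:markov} matching digits up to $(\eps,k)$. Now suppose $\mathsf{NC}(n,n)$ occurs. Consider the last update $(0,\iota,t_0)\in\Pi(\Lambda_n\times[-n,0])$ at the origin; if there is none, the spin stays at $\pm1$, an event of probability $e^{-n}$. At this update, if $\iota\in M$ and the $k$ first digits of the two dynamics agree, then by Definition~\ref{def:prop:digits} the full values agree, which is a contradiction. So either (a) the $k$ first digits at $(0,t_0^-)$ after the update disagree, or (b) $\iota\notin M$. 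In case (b), the two outputs differ, so by the Markov property (Definition~\ref{def:prop:markov}) some neighbour $v\sim 0$ carries different values in the two dynamics just before $t_0$. We then iterate from the last update of $v$ before $t_0$.

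This produces a backward space-time path $(u_0,t_0),(u_1,t_1),\dots,(u_\ell,t_\ell)$. Consecutive vertices are neighbours, each $t_{i+1}$ is the last update time of $u_{i+1}$ before $t_i$, and the updates at $t_0,\dots,t_{\ell-1}$ are all non-matching. The path stops in one of two ways. Either it leaves $\Lambda_{\delta n/2}\times[-\delta n/2,0]$, or it ends at $(u_\ell,t_\ell)$ inside that region with the $k$-digit disagreement
\[\lfloor R^{\Lambda_n,-n,t_\ell}(\alpha^+)_{u_\ell}\rfloor_k\neq\lfloor R^{\Lambda_n,-n,t_\ell}(\alpha^-)_{u_\ell}\rfloor_k.\]
Deterministic paths are indexed by the vertex sequence, so there are at most $(2d)^\ell$ of them. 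The matching events at distinct updates are independent of each other, and also of the past. In addition, at each update $M$ is independent of the $k$-digit output (Definition~\ref{def:prop:digits}). Hence, conditionally on the Poisson clock, the probability that a given path of length $\ell$ is entirely non-matching is at most $\eps^\ell$.

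For the escape case, leaving the region requires either $\ell\geq\delta n/2$ spatial steps, or a total time lag of $\delta n/2$. The time lag is a sum of $\ell$ waiting times that are stochastically dominated by i.i.d.\ exponentials. A Chernoff bound then gives the following: $\sum_\ell(2d\eps)^\ell\,\P[\mathrm{Gamma}(\ell)\geq\delta n/2]$, together with $\sum_{\ell\geq\delta n/2}(2d\eps)^\ell$, is at most $\exp(-c_\eps n)$, provided $\eps$ is small enough depending on $d$. For the endpoint case, I use translation invariance and the monotonicity of Theorem~\ref{thm:cftp} in space and in time. Since $\Lambda_n\times[-n,t_\ell]$ contains a translate of $\Lambda_{(1-\delta)n}\times[-(1-\delta)n,0]$ centred at $(u_\ell,t_\ell)$, the $k$-digit disagreement at the endpoint is contained in a translate of $\lfloor\mathsf{NC}((1-\delta)n,(1-\delta)n)\rfloor_k$. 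The disagreement is measurable with respect to the updates strictly before $t_\ell$, together with the $k$-digit part of the update at $t_\ell$. So it is independent of the non-matching events along the later part of the path, and the union bound gives
\[\sum_\ell(2d\eps)^\ell\,\P[\lfloor\mathsf{NC}((1-\delta)n,(1-\delta)n)\rfloor_k]=:C_\eps\,\P[\lfloor\mathsf{NC}((1-\delta)n,(1-\delta)n)\rfloor_k].\]

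I expect the main obstacle to be the independence bookkeeping in the last step. The endpoint $(u_\ell,t_\ell)$ is random and determined by the Poisson clocks, and the endpoint event is evaluated at a random time. I would handle this by conditioning on the full Poisson clock process (vertices and times, but not the marks $\iota$). Given the clocks, the path is a deterministic function of the sequence of neighbour choices, and the marks $\iota$ are i.i.d. Then I use the fact that each mark splits into two independent components, the matching indicator and the $k$-digit part (this is exactly how the coupling in Lemma~\ref{lem:grand_coupling_almost_uniform} is constructed). Finally, the time-stationarity of the Poisson process lets me replace the window ending at the random time $t_\ell$ by a fixed window.
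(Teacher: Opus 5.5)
Your proposal is correct and is essentially the paper's argument. The paper explores the full backward matching tree, branching to every neighbour at each non-matching update and stopping at matching updates, and union-bounds over its leaves. You instead follow a single disagreeing branch, stop at the first $k$-digit disagreement, and union-bound over the at most $(2d)^\ell$ branches with weight $\eps^\ell$. This is the same subcritical-branching estimate, with the small bonus that you never need independence of $M$ from the $k$-digit output, since your endpoints need not be matching updates.

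One step that you (and the paper, whose leaves are also update times) leave implicit is that the endpoint event is evaluated just after an update at $u_\ell$, not at a fixed time, so stationarity alone does not reduce it to $\lfloor\mathsf{NC}((1-\delta)n,(1-\delta)n)\rfloor_k$. It does follow from the monotonicity in Theorem~\ref{thm:cftp}. The fixed-time disagreement probability is an average of post-update disagreement probabilities over shorter windows. Each of these is at least the post-update probability for the full window, so the fixed-time probability dominates the post-update one.
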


%
\begin{proof}
For a space-time point $(-t,v) \in \R_{\leq 0} \times\Z^d$, we construct its ``matching tree'' $\calT^M(-t,v)$ with the following recursive procedure.
Start from the space-time point $(-t,v)$, and denote by $- t_1$ the smallest negative time below $-t$ at which the spin $\alpha_v$ was resampled. 
We examine the occurence of the matching event $M_{(- t_1, v)}$ in the definition of Property~\ref{prop:digits}, and note that it is independent of the $k$-th first digits of the updated value of the spin at $(-t_1, v)$ by construction. 
Now, 
\begin{itemize}
\item If $M_{(-t_1, v)}$ occurs, we add $(-t_1, v)$ to the tree. 
In that case, we shall say that $(t_1, v)$ is a \emph{leaf} of the tree. 
\item If $M_{(-t_1, v)}^c$ occurs, we add $(-t_1,v)$ and $\calT^M(-t_1, u)$ to the tree, for each $u\sim v$.
\end{itemize}
This provides a well-defined notion of matching tree.
We are interested in the matching tree of the point $(0,0)$ that be call $\calT^M$ in short. 
Also call $\partial \calT^M$ the set of leaves of the matching tree of $(0,0)$.
We claim the following two properties: 

 \begin{itemize}
 \item On the event $\{ \calT^M \cap \partial( (-n,0] \times \Lambda_n ) = \emptyset \}$, the random variable $R^{\Lambda_n, -n}(\alpha^\pm)_0$ is measurable with respect to 
 \begin{itemize}
 \item The data of $R^{\Lambda_n, -n, -t'}(\alpha^\pm)_{u'}$, for all $(-t', u')\in \partial \calT^M$. 
 \item The data of $\Pi(\Lambda_n \times (-n,0]).$
 \end{itemize}
 This is due to the Markov property of the Glauber dynamics: at each resampling time, the dynamics inspect the values of the neighbours of the spin to be resampled, and uses the Poisson randomness to resample. A value on the boundary of the boxe is never explored by assumption. 
 
 \item On the event that the two truncated dynamics coincide on the leaves set $\partial \calT^M $, \emph{i.e.} on the formal event
 \[ 
 \{ \calT^M \cap\partial( (-n,0] \times \Lambda_n ) = \emptyset \} \cap \{ \bigcap_{(-t', u) \in  \partial \calT^M} \lfloor\mathsf{C}_u(n, n, -t')\rfloor_k \},
  \]
  then it is the case that 
 \[ R^{\Lambda_n, -n}(\alpha^+)_0 = R^{\Lambda_n, -n}(\alpha^-)_0. \]
Indeed, at the leaves of the tree, $\alpha^+$ and $\alpha^-$ coincide: their $k$ first digits are the same by assumption, and by definition of the leaves of the tree, their further digits are matched. The claim is then a consequence of the first item, with the fact that the dynamics never explores values on the boundary of the box by assumption.
 \end{itemize}
 
Now recall that $0<\delta < 1$, and assume that $\eps > 0$ is sufficiently small, and $k = k(\eps)$ is chosen so that: 
\begin{itemize}
\item The dynamics is $(\eps, k)$-matching the digits.

\item There exists $c_\eps > 0$ such that 
\[ \P[ \calT^M \cap (\Lambda_{\delta n}\times (-\delta n, 0] )^c \neq \emptyset  ] \leq \exp(- c_\eps \delta n ).\]
\end{itemize}

Such a choice can always be made, by comparing the law of the offspring of the matching tree with that of a subcritical branching process in continuous time and using classical results about those processes, see~\cite{athreya2004branching} for instance.
It is now easy to conclude. 
Indeed, the event $\mathsf{NC}(n,n,0)$ is partitioned as wether the event $\calT^M  \cap ((-\delta n,0] \times \Lambda_{\delta n})^c \neq \emptyset $ occurs or not. 
We obtain

\begin{multline*}
\P[\mathsf{NC}(n,n)]  \leq \exp(-c_\eps \delta n) \\
+ \P[ \{\calT^M  \subset ((-\delta n,0] \times \Lambda_{\delta n}) \} ,  \bigcup_{(-t', u) \in \partial_{\mathrm{int}} \calT^M} \lfloor \mathsf{NC}_u(n,n, -t') \rfloor_k] .
\end{multline*}
Call $C_\eps$ the expected size of the leaves set of $\calT^M$ (which is finite by the fact that we chose $\eps >0$ so small that this tree is dominated by a subcritical branching process),  and observe that by monotonicity in space and in time, $\P[\lfloor \mathsf{NC}_u(n,n, -t') \rfloor_k] \leq \P[\lfloor \mathsf{NC}((1-\delta)n ,n, -\delta n) \rfloor_k] $ when $t' \leq \delta n$ and $u\in \Lambda_{\delta n} .$ 
Using the independence between the occurence of the matching event and the first $k$ digits of the dynamics, we obtain
\begin{align*}
\P[\mathsf{NC}(n,n)]  &\leq \exp(-c_\eps \delta n) + C_\eps \P[\lfloor \mathsf{NC}((1-\delta)n,n, -\delta n) \rfloor_k]\\
&=  \exp(-c_\eps \delta n) + C_\eps \P[\lfloor \mathsf{NC}((1-\delta)n,(1-\delta)n) \rfloor_k].
\end{align*}
which concludes the proof. 
 \end{proof}


We are ready to prove Proposition~\ref{prop:subpolynomial_decay_coupling}. 

\begin{proof}[Proof of Proposition~\ref{prop:subpolynomial_decay_coupling}]
Fix $\delta \in (0,1)$.
In light of Lemma~\ref{lem:tree_eps_perco}, one has

\begin{align*}
\P[\mathsf{NC}(n,n,\delta n)] &\leq \P[\mathsf{NC}((1-\delta)n, (1-\delta)n )] \\
&\leq \exp(-c_\eps \delta n) + \P[\lfloor \mathsf{NC}((1-\delta)^2n, (1-\delta)^2n) \rfloor_k]. 
\end{align*}

It is thus sufficient to argue that the quantity $\Psi(n) :=  \P[\lfloor \mathsf{NC}(n, n) \rfloor_k]$ decays superpolynomially fast.
We are now in the context of a finite spin space, and will mimic the Harel--Spinka argument.
However the absence of ``temporal Markov property'' for the truncated field requires yet another argument in the proof, that we shall make explicit. 

Our goal is to use Lemma~\ref{lem:tool_superpolynomial_decay} and thus to argue that $\Psi$ satisfies a renormalization inequality. 
Fix $\kappa > 0$, the value of which will be set later on. 
Also define 
\[\phi(n,s) := \P[ \lfloor \mathsf{NC}(n,s) \rfloor_k ].\]

Introduce two variables $\xi^+, \xi^-$, such that $\xi^+ \sim \mu^+_{\Lambda_m}, \xi^- \sim \mu^-_{\Lambda_m}$ and $\xi^- \leq \xi^+$ almost surely. 
We first write, using Lemma~\ref{lem:TV}: 
\begin{equation}\label{equ:phi_plus_phi_minus}
\phi(n,s) \leq \phi^+(n,s) + \phi^-(n,s) + 10^k  \Vert \mu^+_{\Lambda_n}(\lfloor \alpha_0 \rfloor_k \in \cdot) - \mu^-_{\Lambda_n}(\lfloor \alpha_0 \rfloor_k \in \cdot)   \Vert_{\mathrm{TV}},
\end{equation}
where 
\[
\phi^\pm(n,s) =  \P[ R^{\Lambda_{n}, s}(\alpha^\pm)_0 \neq R^{\Lambda_{n}, s}(\xi^\pm)_0]. 
\]
By the mixing property of Lemma~\ref{lem:spatial_mixing_SWM}, the total variation term is upper bounded by $\e^{-cn}$.

We will now prove that for any $m, n \in \N$ any $t, s \in \R_{>0}$ such that $t \geq n$, the following holds:
\begin{multline}\label{equ:target_phi}
\phi^{\pm}(m+n, t+s) \leq  10^k\exp(-cm) + m^d\exp(-c_\eps \kappa n) \\ + C_\eps m^d\phi(n,s)\phi((1-\kappa)n, (1-\kappa)n). 
\end{multline}
The exact values in this inequality are not so important. 
Indeed, to conclude, it will be sufficient to observe that for $s=t=n$ and $m \leq n$,~\eqref{equ:phi_plus_phi_minus} and~\eqref{equ:target_phi} give 
\[ \phi(m+n, 2n) \leq 2C_\eps m^d\Psi(n)\Psi((1-\kappa) n) + m^d\e^{-cn} + C\e^{-cm},\]
and as $m \leq n$, the term $m^d\e^{-cn}$ can be absorbed by $C\e^{-cm}$, altering the values of $c, C > 0$.
Monotonicity in space-time allows to bound 
\[\Psi(n) \leq \Psi((1-\kappa)n), \] so that $\Psi$ satisfies the remormalisation equation~\eqref{equ:lem_renormalisation}.
As is is clear that $\Psi$ is monotone decreasing to 0, the proof is complete.

We thus focus on proving~\eqref{equ:target_phi} for $\phi^+$, and fix $n,m,s,t$ and $\kappa$ accordingly. 
Introduce an independent copy of the Poisson point process on the volume $\Lambda_{n+m}$ in the time interval $[-t, 0]$, and denote by $\tilde R^{\Lambda_{n+m}, t}$ the associated Glauber map.
Standard independence properties of Poisson point processes imply that: 
 \[
 \phi^+(n+m, t+s) = \P[ \lfloor R^{\Lambda_{n+m}, s}(\tilde R^{\Lambda_{n+m}, t}(\alpha^+))_0 \rfloor_k \neq \lfloor R^{\Lambda_{n+m}, s}(\tilde R^{\Lambda_{n+m}, t}(\xi^+))_0 \rfloor_k ]. 
 \]
 We decompose over the event:
 \[ \calE = \bigcap_{v \in \Lambda_m} \{ \tilde R^{\Lambda_{n+m}, t}(\alpha^+)_v = \tilde R^{\Lambda_{n+m}, t}(\xi^+)_v  \}. \]
 
 Indeed, observe that:
 
 \begin{itemize}
 \item If $\calE$ occurs, then $R^{\Lambda_{n+m}, t}(\alpha^+)$ reached the invariant measure on $\Lambda_m$. 
 Consequently, conditioned on that event, the worst situation for coupling is if $R^{\Lambda_{n+m}, t}(\alpha^+)$ (resp. $R^{\Lambda_{n+m}, t}(\xi^+)$) is maximal (resp. minimal) outside of the box $\Lambda_m$.
 Call $\zeta^{+, +}$ (resp. $\zeta^{+, -}$) the configuration equal to $R^{\Lambda_{n+m}, t}(\xi^+)$ in $\Lambda_m$ and maximal (resp. minimal) outside of it. 
 This discussion yields (we still work conditionally on $\calE$, and write $\phi_\calE(\cdot, \cdot)$ for the conditional probability)
 \[
 \phi_{\calE}(n+m, t+s) \leq  \P[ \lfloor R^{\Lambda_{m}, s}(\zeta^{+, +})_0\rfloor_k \neq   \lfloor R^{\Lambda_{m}, s}(\zeta^{+, -})_0\rfloor_k ].
 \]
 But observe that $\zeta^{+, -} \geq \zeta^{-,-}$, where $\zeta^{-,-}$ is constructed by the same procedure with $R^{\Lambda_{n+m}, t}(\xi^-)$ instead of $R^{\Lambda_{n+m}, t}(\xi^+)$. 
 This is due to the monotonicity in the coupled dynamics. 
 Thus,
 \[
 \phi_\calE(n+m, r+s) \leq \P[ \lfloor R^{\Lambda_{m}, s}(\zeta^{+, +})_0\rfloor_k \neq   \lfloor R^{\Lambda_{m}, s}(\zeta^{-, -})_0\rfloor_k ]
 \]
 By the fact that $\zeta^{+,+}$ (resp. $\zeta^{-,-}$) is invariant for the dynamics with $+$ (resp. $-$) boundary conditions on $\Lambda_m$ and using Lemmas~\ref{lem:TV} and~\ref{lem:spatial_mixing_SWM}, we obtain
 \begin{equation}
 \label{equ:renormalization_1}
  \phi_\calE(n+m, r+s) \leq 10^k \Vert \mu^{+}_{\Lambda_m}(\alpha_0 \in \cdot) - \mu^{-}_{\Lambda_m}(\alpha_0 \in \cdot)  \Vert_{\mathrm{TV}} \leq 10^k \exp(-cm). 
 \end{equation}
 
 \item Now, if $\calE$ does not occur, in which case we write $\phi_{\calE^c}(\cdot, \cdot)$ for the conditional probability, then the worst situation for coupling is if $R^{\Lambda_{n+m}, t}(\alpha^+)$ (resp. $R^{\Lambda_{n+m}, t}(\xi^+)$) is maximal (resp. minimal) in $\Lambda_{n+m}$. 
 In that case, we may bound the conditional probability of failing to couple at time 0 by $\phi(n+m, s)$. 
 Thus
 \begin{equation}\label{equ:renormalization_2}
 \phi_{\calE^c}(m+n, t+s) \leq \phi(n+m, s).
 \end{equation}
 \end{itemize}
 
 To conclude it remains to prove an upper bound on $\P[\calE^c]$. 
 Here a new argument is required.
 Fix $\kappa > 0$ to be the size of a ``buffer zone'' that we are going to use for applying Lemma~\ref{lem:tree_eps_perco}. 
 Indeed, we observe that, due to the choice $t \geq n$ and by monotonicity in time and space,
 \[
 \P[\calE^c] \leq m^d \sup_{v \in \Lambda_m}  \P[\mathsf{NC}_v(n+m, n)] \leq m^d \P[\mathsf{NC}(n, n)].
 \]
 Now, Lemma~\ref{lem:tree_eps_perco} implies that
 \begin{align}  \label{equ:renormalization_3}
 \P[\mathsf{NC}_v(n, n)] &\leq \exp(-c_\eps \kappa n) + C_\eps\phi((1-\kappa)n, (1-\kappa)n)  \\ 
 \end{align}
 
 Putting~\eqref{equ:renormalization_1},~\eqref{equ:renormalization_2}, and~\eqref{equ:renormalization_3} together yields:
 \begin{align*}
 \phi(n+m, t+s)& \leq 10^k \exp(-cm) + m^d\phi(n+m, s)( \exp(-c_\eps \kappa n) + C_\eps\phi((1-\kappa)n, (1-\kappa)n)) \\
 &\leq 10^k \exp(-cm) +  m^d\exp(-c_\eps \kappa n) + C_\eps m^d \phi(n,s)\phi((1-\kappa)n, (1-\kappa)n).
 \end{align*}
 This is exactly~\eqref{equ:target_phi}, which concludes the proof.
\end{proof}

%% file: sections_new/SWM/04_conclusion_efiid.tex
\section{SWM: Construction of the EFIID}
\label{sec:efiid_SWM}

\subsection{Strong supercriticality of the process of mixed boxes}

Let $\delta > 0$ be the quantity given by Proposition~\ref{prop:space_time_mixing}.
Introduce $L \in \N$. 
This number will be thought about as the coarse-graining scale of the space-time set $\R^- \times \Z^d$.
By convenience, assume that $L\delta^{-1} := n_L$ is always an integer.

We first introduce a notion of \emph{mixed point}. 

\begin{definition}\label{def:mixed_box}
For any $(t,x) \in L\cdot(\Z_{\leq 0} \times \Z^d)$, we say that $(t,x)$ is $(L,\delta)$-mixed if the event 
\[ \bigcap_{-r \in [-L(t+1), -L t] } \{R^{\Lambda_{2n_L}(v), -(t+1)n_L, -r}(\alpha^+ )|_{\Lambda_{n_L}(v)} = R^{\Lambda_{2n_L}(v), -(t+1)n_L, -r}(\alpha^- )|_{\Lambda_{n_L}(v)} \}\]
occurs. 
\end{definition}

In other words, we ask that the maximal and minimal dynamics remain coupled in a box of size $n_L$ during a time $L$, when the boundary conditions are at space time distance $2n_L$ from $(t,v)$. 
The scales are exactly chosen so as to apply Proposition~\ref{prop:space_time_mixing}.

We start by observing that the environment of mixed boxes can be made extremely subcritical when $L$ is large. 
Introduce the site percolation process on $L \cdot(\Z_{\leq 0} \times \Z^d)$ defined by 
\[\Theta^{\delta, L}_{(t,x)} = \ind{ \{ (t,x) \text{ is }(L, \delta)-\text{ mixed}\}}.\] 

\begin{lemma}\label{lem:stoch_dom}
There exists a sequence $\eps_L$ tending to 0 when $L\rightarrow \infty$ such that the law of $\Theta^{\delta, L}$ stochastically dominates $\P^L_{1-\eps_L}$, where $\P^L_{1-\eps_L}$ is the law of a Bernoulli bond percolation of parameter $1-\eps_L$ on the lattice $L\cdot (\Z_{\leq 0} \times \Z^d)$. 
\end{lemma}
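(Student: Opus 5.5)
The plan is to combine a finite-range dependence property of the field $\Theta^{\delta,L}$ with the Liggett--Schonmann--Stacey domination theorem. We first show that $\Theta^{\delta,L}$ is a $K$-dependent site field on the coarse lattice $L\cdot(\Z_{\leq 0}\times\Z^d)$, with $K$ independent of $L$. We then show that its one-point marginals tend to $1$ as $L\to\infty$.

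\textbf{Finite-range dependence.} By Definition~\ref{def:mixed_box}, the event that $(t,x)$ is $(L,\delta)$-mixed is measurable with respect to $\Pi$ restricted to the space-time box $\Lambda_{2n_L}(x)\times[-(t+1)n_L, -Lt]$. This holds because the maps $R^{\Lambda,-t,-s}$ only use Poisson points inside $\Lambda\times[-t,-s]$ and are started from the deterministic configurations $\alpha^\pm$. This box has spatial extent $4n_L=4L/\delta$ and a temporal extent that is also a bounded multiple of $L/\delta$. Hence, in units of the coarse lattice, it meets only the boxes attached to coarse points within distance $K:=\lceil 10/\delta\rceil+1$. Since the Poisson process is independent on disjoint space-time regions, $\Theta^{\delta,L}$ is $K$-dependent with $K$ uniform in $L$. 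One should double-check the time indexing in the definition (the $L$ versus $n_L$ factors); any consistent reading yields a window of size $O(L/\delta)$, which is all that is needed.

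\textbf{Marginals.} By time-stationarity of the Poisson process and translation invariance, $\P[\Theta^{\delta,L}_{(t,x)}=1]$ does not depend on $(t,x)$. It equals the probability in Proposition~\ref{prop:space_time_mixing} with $n=n_L$, after shifting time so that the coupling window $[-\delta n_L,0]=[-L,0]$ is aligned. Proposition~\ref{prop:space_time_mixing} then gives $\P[\Theta^{\delta,L}_{(t,x)}=0]=:\eta_L\to 0$ as $L\to\infty$.

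\textbf{Domination.} The Liggett--Schonmann--Stacey theorem states that a $K$-dependent field on a graph of bounded degree $\Delta_K$ with marginals at least $1-\eta$ dominates Bernoulli product measure of parameter $1-\eps(\eta)$. Here $\eps(\eta)\to 0$ as $\eta\to0$, and the rate depends only on $K$ and $d$. Applying it to the graph on $L\cdot(\Z_{\leq0}\times\Z^d)$ that connects points within coarse distance $K$ gives the claim with $\eps_L:=\eps(\eta_L)\to0$. Since $\Theta$ lives on sites, we obtain domination of site percolation. If bond percolation is meant as in the statement, we take each bond to be open when both endpoints are open; this is again a finite-range field with marginals at least $1-2\eta_L$, and the same theorem applies.

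The main obstacle is the dependence step, namely checking that the mixing event really depends only on $\Pi$ in a window of size $O(L)$. The constant $K$ must not grow with $L$, which would ruin the LSS bound. The rest follows from results already stated earlier in the paper.
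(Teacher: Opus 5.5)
Your proposal is correct and follows the paper's proof essentially step for step. You get finite-range dependence, uniform in $L$, from measurability with respect to $\Pi$ on the box $\Lambda_{2n_L}(x)\times[-(t+1)n_L,-Lt]$; you get marginals tending to $1$ from Proposition~\ref{prop:space_time_mixing} with $n=n_L$; and the Liggett--Schonmann--Stacey theorem concludes. Your extra care on the time indexing, on the dependence range (the paper's $2\delta^{-1}+1$ is a bit small given that the boxes $\Lambda_{2n_L}$ have width $4L/\delta$), and on site versus bond percolation is warranted, but none of these points changes the argument.
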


\begin{proof}
For any $(t,x) \in L \cdot(\Z_{\leq 0} \times \Z^d)$, observe that the event $\{\Theta^{L,\delta}_{(t,x)} = 1\}$ is measurable with respect to
\[ \Pi(\Lambda_{2n_L}(v), [-(t+1)n_L, tL] ). \]
This implies that the process $\Theta$ is a $2\delta^{-1} + 1$ dependent process on $L\cdot (\Z_{\leq 0} \times \Z^d)$, endowed with nearest-neighbour connectivity. 

Furthermore, by the choice of $n_L$, Proposition~\ref{prop:space_time_mixing} implies that for any $(t,v) \in L\cdot (\Z_{\leq 0} \times \Z^d)$,
\[\P[\Theta^{L, \delta}_{(t,x)} = 1] \underset{L\rightarrow \infty}{\longrightarrow} 1.\]
A classical result of~\cite{liggett_schonmann} then implies the result. 
\end{proof}

We also gather the properties of the set of mixed boxes that we shall use to construct the EFIID representation of the model. 

\begin{lemma}\label{lem:property_good_boxes}
Let $(t,x) \in L\cdot (\Z_{\leq 0} \times \Z^d)$ such that $(t,x)$ is $(L,\delta)$-mixed. 
Then, the following statements are true:
\begin{itemize}
\item For any $r \in [-L(t+1), -L t]$, the distributions of $R^{\Lambda_{2n_L}(v), -(t+1)n_L, -r}(\alpha^\pm)|_{\Lambda_{n_L}(x)}$ are given by the measure $\mu^{\SWM}_{\Lambda_{2n_L}}(\cdot|_{\Lambda_{n_L}(x)})$.
\item For any $r \in [-L(t+1), -L t]$, the random variables $R^{\Lambda_{2n_L}(x), -(t+1)n_L, -r}(\alpha^\pm)|_{\Lambda_{n_L}(x)}$ are independent of the Poisson process outside of the set $\Lambda_{2n_L}(x) \times [-(t+1)n_L, tL] $. 
\end{itemize}
\end{lemma}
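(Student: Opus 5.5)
The plan is to derive both items from three earlier facts: the coupling-from-the-past structure of Theorem~\ref{thm:cftp}, stationarity of the Glauber dynamic with respect to the finite-volume Gibbs measure, and the locality of the Poisson point process. Throughout, I read the notation at scale $n_L$. The relevant process is the restriction of $\Pi$ to the box $\Lambda_{2n_L}(x)$ over the time window ending at $-r$, and the dynamics there is run with boundary condition $+1$ or $-1$ outside $\Lambda_{2n_L}(x)$, matching the convention $\alpha^\pm$.

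For the second item, I will argue by measurability. By definition, $R^{\Lambda_{2n_L}(x),-(t+1)n_L,-r}$ is the composition of the updates $R_{u_i,\iota_i}$ over the points of $\Pi(\Lambda_{2n_L}(x)\times[-(t+1)n_L,-r])$, applied to the deterministic configuration $\alpha^\pm$. By the Markov property (Definition~\ref{def:prop:markov}), each update reads only neighbouring spins, and these are either inside the box or fixed by $\alpha^\pm$. The output is therefore a measurable function of $\Pi$ restricted to $\Lambda_{2n_L}(x)\times[-(t+1)n_L,-r]$. Since $-r\le -tL$, this space-time window lies inside $\Lambda_{2n_L}(x)\times[-(t+1)n_L,tL]$ (with the sign conventions of Definition~\ref{def:mixed_box}). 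The Poisson process restricted to disjoint space-time regions is independent, which gives the claim. Nothing here depends on the mixing event.

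For the first item, I will use a sandwich argument. Let $\xi\sim\mu_{\Lambda_{2n_L}(x)}$ be the finite-volume Gibbs measure, with either boundary condition, sampled independently of $\Pi$. Run the same dynamics from $\xi$. By the consistency property in Definition~\ref{def:glauber}, each single-site update preserves $\mu_{\Lambda_{2n_L}(x)}$. Conditionally on the update times, which are independent of the marks $\iota$, the composition also preserves it, so $R(\xi)\sim\mu_{\Lambda_{2n_L}(x)}$. Monotonicity (Theorem~\ref{thm:cftp}) gives $R(\alpha^-)\le R(\xi)\le R(\alpha^+)$. On the mixed event, $R(\alpha^+)|_{\Lambda_{n_L}(x)}=R(\alpha^-)|_{\Lambda_{n_L}(x)}$, so all three agree on $\Lambda_{n_L}(x)$. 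The restricted configuration equals $R(\xi)|_{\Lambda_{n_L}(x)}$, which has law $\mu_{\Lambda_{2n_L}(x)}|_{\Lambda_{n_L}(x)}$.

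The main obstacle is making this distributional claim precise. The mixing event depends on $\Pi$, so conditioning on it could in principle bias the law. I would state the first item as an identity on the mixed event, namely that the common value coincides almost surely with a variable whose unconditional law is $\mu_{\Lambda_{2n_L}}|_{\Lambda_{n_L}}$. This is the form actually needed for the EFIID construction later. A secondary point is the choice of boundary condition in $\mu^{\SWM}_{\Lambda_{2n_L}}$: on the mixed event the $+$ and $-$ stationary versions coincide on $\Lambda_{n_L}(x)$, so either measure can be used there, and Lemma~\ref{lem:spatial_mixing_SWM} controls the discrepancy off the event.
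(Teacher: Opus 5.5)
Your proof is correct and is essentially the paper's argument written out: the paper's proof only cites Theorem~\ref{thm:cftp}, and your monotone sandwich $R(\alpha^-)\le R(\xi)\le R(\alpha^+)$ with an independent stationary copy $\xi$, together with the fact that $R_{\Pi(B)}(\alpha^\pm)$ is a function of $\Pi(B)$ alone, is exactly what that citation is meant to supply. You are also right to read the first item as an almost-sure identity on the mixed event rather than as a statement about the conditional law, which conditioning on coalescence can bias; and note that the second item needs no Markov property, since the input $\alpha^\pm$ is deterministic, so your argument carries over unchanged to the XY analogue, Lemma~\ref{lem:property_good_boxes_XY}.
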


\begin{proof}
Both of these statements come from the properties of the coupling from the past stated in Theorem~\ref{thm:cftp}.
\end{proof}

\subsection{Construction of the local sets for $\Theta$}

For any $L \geq 0$, we endow the space-time set $L\cdot(\Z_{\leq 0} \times \Z^d)$ with the usual $*$-connectivity: two vertices are considered neighbours when their $\ell^\infty$ norm is equal to $L$ in the underlying graph $\Z_{\leq 0} \times \Z^d$.
In a $\{0,1\}$-valued site percolation process on $L\cdot(\Z_{\leq 0} \times \Z^d)$, we denote by $\calC$ the 1-cluster of the vertex 0 for the $*$-connectivity, and by $\calC^*$ the 0-cluster of the vertex 0 for the $*$-connectivity.
We start by fixing the value of $L$ that we shall use to construct the local sets. 

\begin{lemma}\label{lem:peierls}
There exists a value of $L \geq 0$ large enough and two constants $c,C>0$ such that for any $n \geq 1$, 
\[
\P^L_{1-\eps_L}[|\calC^*| > n] \leq C\exp(-cn). 
\]
\end{lemma}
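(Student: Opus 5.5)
The plan is a standard Peierls (path-counting) estimate for the $0$-cluster of the origin in a highly supercritical Bernoulli field, with $L$ fixed so that $\eps_L$ is small. Here $\calC^*$ is the $*$-connected $0$-cluster of the origin in the lattice $L\cdot(\Z_{\leq 0}\times\Z^d)$. Rescaling by $L$, this lattice is a copy of $\Z_{\leq 0}\times\Z^d$, and every vertex has at most $D:=3^{d+1}-1$ $*$-neighbours, a number that does not depend on $L$. Under $\P^L_{1-\eps_L}$ the states of the vertices are independent, and each vertex is closed (state $0$) with probability $\eps_L$. Lemma~\ref{lem:stoch_dom} is phrased with ``bond'' percolation, but the process $\Theta^{\delta,L}$ is a site process, so I read $\P^L_{1-\eps_L}$ as Bernoulli site percolation. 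If a bond version were really intended, one passes to sites by declaring a site closed when some incident bond is closed. This is still an independent field, since each site depends on at most $D'$ bonds, and its closure probability is at most $D'\eps_L$, which also tends to $0$.

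\textbf{Step 1 (counting connected sets).} The number of $*$-connected vertex sets of size $n$ containing the origin is at most $K^n$, where $K:=(e(D+1))$ depends only on $d$. This follows from the classical lattice-animal bound. Alternatively, one takes a spanning tree of the set and encodes the set by a depth-first exploration walk of length at most $2n$; this gives at most $D^{2n}$ sets.

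\textbf{Step 2 (union bound).} Suppose $|\calC^*|>n$, and assume the origin is closed, which is the only case in which $\calC^*\neq\emptyset$. Then $\calC^*$ contains a $*$-connected set $A$ with $0\in A$ and $|A|=n+1$, all of whose vertices are closed. One way to extract $A$ is to grow $\calC^*$ vertex by vertex from the origin, always adding a vertex that is $*$-adjacent to the current set. By independence,
\[
\P^L_{1-\eps_L}\big[|\calC^*|>n\big]\leq \sum_{A}\eps_L^{|A|}\leq (K\eps_L)^{n+1}.
\]
Because $\eps_L\to0$ by Lemma~\ref{lem:stoch_dom}, we can fix $L$ so large that $K\eps_L\leq e^{-1}$. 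This gives the claim with $c=1$ and $C=1$, or with any constants $c>0$ and $C$ for $L$ larger still.

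\textbf{Main obstacle.} There is essentially no obstacle. The only point needing care is that the combinatorial constant $K$ must be uniform in $L$. This holds because the rescaled lattice does not depend on $L$, and the half-space restriction $\Z_{\leq 0}$ only reduces the number of animals. The exponential decay then comes entirely from choosing $L$ so that $\eps_L<1/K$.
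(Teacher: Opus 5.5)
Your argument is correct and is the same standard Peierls (lattice-animal counting plus union bound) estimate that the paper invokes in one line. Reading $\P^L_{1-\eps_L}$ as Bernoulli \emph{site} percolation is right, since $\Theta^{\delta,L}$ is a site process. One small slip in your side remark on the bond version: the induced site field is only $1$-dependent, not independent. The Peierls bound still goes through by choosing one closed incident bond per site of $A$ (each bond is chosen at most twice), which gives a bound of the form $(KD'\eps_L^{1/2})^{|A|}$.
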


\begin{proof}
This follows from a standard Peierls arguments for Bernoulli bond percolation of small parameter $\eps_L$ in the graph $L\cdot(\Z_{\leq 0} \times \Z^d)$.
\end{proof}

{\bf The value of $L$ is now fixed and given by Lemma~\ref{lem:peierls}, as well as the values of the constants $c,C$ > 0.}
We will also drop the dependency in $L$ and in $\delta$ in the notation $\Theta^{\delta, L}$. 

For an integer $N \geq 0$, we also introduce the following usual notion of \emph{$N$-external complement}. 
For a connected set $\mathsf{C} \subset L\cdot(\Z_{\leq 0} \times \Z^d)$ containing 0, we define its $N$-external complement by:
\[ \partial^N_{\mathrm{ext}}\mathsf{C} := \{ y \in L\cdot(\Z_{\leq 0} \times \Z^d)\setminus \mathsf{C}, \inf_{x\in\mathsf{C}}\Vert x-y\Vert_\infty > L N  \text{  and  } y \overset{L\cdot(\Z_{\leq 0} \times \Z^d)\setminus \mathsf{C}}{\longleftrightarrow} \infty. \}\]
where we recall that the infinite norm is taken with respect to the underlying graph $\Z_{\leq 0}\times \Z^d$. 
The notation $ y \overset{L\cdot(\Z_{\leq 0} \times \Z^d)\setminus \mathsf{C}}{\longleftrightarrow} \infty$ means that there exists an infinite self-avoiding nearest-neighbour path in $L\cdot(\Z_{\leq 0} \times \Z^d)\setminus \mathsf{C}$ emanating from $y$. 

We now call $\calC^*$ the 0-cluster of the vertex (0,0) in the process $\Theta$.
The key input for constructing the local sets for $\alpha_0$ is the following observation. 

\begin{lemma}\label{lem:decoupling}
The random variable $R^{\infty, -\infty, 0}(\alpha^+)_0$ is independent of the Poisson point process $\Pi$ restricted to the set $\partial^{2n_L}_{\mathrm{ext}} \mathcal{C}^* \times \R_{< 0}$, where we recall that $n_L = \lceil \delta^{-1}L \rceil$. 
\end{lemma}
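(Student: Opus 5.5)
We view $R^{\infty,-\infty,0}(\alpha^+)_0$ as the coupling-from-the-past value. By Theorem~\ref{thm:cftp}, it is the common limit of $R^{\Lambda,-t}(\alpha^\pm)_0$ as $\Lambda\uparrow\Z^d$ and $t\to\infty$, and it does not depend on the starting configuration once coupling occurs. The plan is to show that this value can be computed from the Poisson process inside a bounded region determined by $\calC^*$ together with its mixed boundary, so that the restriction of $\Pi$ to $\partial^{2n_L}_{\mathrm{ext}}\calC^*\times\R_{<0}$ is never read. The independence then needs an extra conditioning argument, because $\calC^*$ is itself random; this is the point I expect to be the main obstacle.

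First I will use a standard circuit argument. Since $\calC^*$ is the finite $0$-cluster for $*$-connectivity (finite almost surely by Lemmas~\ref{lem:stoch_dom} and~\ref{lem:peierls}), its outer boundary contains a separating set $\Gamma$ of mixed sites surrounding it. Here "surrounding" is in the space-time lattice $L\cdot(\Z_{\leq 0}\times\Z^d)$, and $\Gamma$ includes a bottom layer in time. By Definition~\ref{def:mixed_box} and the first item of Lemma~\ref{lem:property_good_boxes}, at every mixed site $(t,x)\in\Gamma$ the maximal and minimal dynamics run from the boundary of $\Lambda_{2n_L}(x)$ at time $-(t+1)n_L$ agree on $\Lambda_{n_L}(x)$ throughout the time window. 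By monotonicity (Theorem~\ref{thm:cftp}), the true infinite-volume dynamic is sandwiched between them, so on $\Lambda_{n_L}(x)\times[-L(t+1),-Lt]$ it equals a quantity determined by $\Pi$ restricted to $\Lambda_{2n_L}(x)\times[-(t+1)n_L,-Lt]$ (second item of Lemma~\ref{lem:property_good_boxes}).

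Second, I will use the Markov property (Definition~\ref{def:prop:markov}) to propagate inward. The values of $\alpha_0$ at time $0$ are obtained by running the updates in the region enclosed by $\Gamma$, with values on $\Gamma$ serving as boundary data. Each update inspects only neighbours, and the layers of $\Gamma$ are thick enough (boxes of size $n_L$ overlapping on the lattice spacing $L\le \delta n_L$) that no update chain can cross $\Gamma$ without passing through a determined region. Hence $R^{\infty,-\infty,0}(\alpha^+)_0$ is a function of $\Pi$ restricted to the $2n_L$-neighbourhood of $\calC^*\cup\Gamma$, and this neighbourhood is disjoint from $\partial^{2n_L}_{\mathrm{ext}}\calC^*$ by construction of the external complement.

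Third, to obtain genuine independence, I will decompose over the possible values $\mathsf C$ of $\calC^*$. The event $\{\calC^*=\mathsf C\}$ is determined by the states of sites in $\mathsf C$ and its outer vertex boundary. Each such state is measurable with respect to $\Pi$ in boxes of radius $2n_L$ around those sites, so the event is a local set in the sense of Definition~\ref{def:local_set} relative to $\Pi$ on the $2n_L$-neighbourhood of $\mathsf C$. On this event, both the event itself and the value at the origin are measurable with respect to $\Pi$ off $\partial^{2n_L}_{\mathrm{ext}}\mathsf C\times\R_{<0}$. The Poisson independence over disjoint regions then shows that, conditionally on $\{\calC^*=\mathsf C\}$, the restriction to $\partial^{2n_L}_{\mathrm{ext}}\mathsf C$ keeps its unconditioned law and is independent of the value. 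Summing over $\mathsf C$ gives the statement, interpreted as conditional independence given $\calC^*$.

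The delicate step is checking that the time-window and box-size bookkeeping actually makes $\Gamma$ a valid shield. In particular, the boxes $\Lambda_{2n_L}(x)$ reach backward in time to $-(t+1)n_L$, which is far deeper than the lattice spacing $L$ allows for. I will handle this by enlarging the excluded neighbourhood to distance $2n_L$ in both space and time, which is exactly why the external complement is taken with parameter $2n_L$.
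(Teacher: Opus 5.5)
Your proposal is correct and follows the paper's argument. The outer boundary of $\calC^*$ is a shielding surface of mixed boxes whose values are fixed by local coupling from the past (Lemma~\ref{lem:property_good_boxes}), and the Markov property ensures that updates inside the surface only read values inside or on it. Your third step, decomposing over $\{\calC^*=\mathsf C\}$ and reading the statement as conditional independence given $\calC^*$, is a useful formalisation of something the paper leaves implicit, since the region in the statement is itself random.
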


\begin{proof}
The lemma follows by the observation that the external boundary of $\calC^*$ consists in a space-time surface of mixed boxes shielding 0 from $\partial^{n_L}_{\mathrm{ext}}C^*$.
When updating the state of a point on the interior of that surface, the Glauber rule inspects the value of the field:
\begin{itemize}
\item Either on a vertex contained on the inside of the surface, in which case it does not depend on the Poisson randomness outside of it,
\item  Or on a vertex contained in the surface, in which case by Lemma~\ref{lem:property_good_boxes}, it is also independent of the Poisson process on $\partial^{2n_L}_{\mathrm{ext}} \mathcal{C}^* \times \R_{< 0}$. 
\end{itemize}  
As $(0,0)$ is contained in the space-time surface, the proof is complete.  
\end{proof}

We conclude by explaining how this property implies Theorem~\ref{thm:EFIID_SWM_XY} in the case of the SWM. 

\begin{proof}[Proof of Theorem~\ref{thm:EFIID_SWM_XY} for the SWM]
The EFIID is constructed as follows. 
The field $(X_v)_{v\in\Z^d}$ is given by the data of the Poisson point process $\Pi(\{v\}, \R_{\leq 0})_{v\in\Z^d}$, 
and the map $\phi$ is given by 
$\phi(X)_v := R^{\infty, \infty, 0}(\alpha^+)_v$, 
for any $v\in\Z^d$. 
Moreover, for any $v \in\Z^d$, define the set $\calL_v := \Z^d \setminus \pi (\partial^{n_L}_{\mathrm{ext}}C^*_v)$, 
where $\pi : \Z_{\leq 0}\times\Z^d \to \Z^d, (t,x) \mapsto x$ is the spatial projection.

\begin{itemize}

 \item By construction, $\phi$ is clearly measurable, equivariant with respect to the automorphism group of $\Z^d$ , and $\phi(X)$ is distributed as $\mu^{\mathsf{SWM}}$. 

\item That $\{\Theta^{L,\delta}_{(t,x)} = 1\}$ is measurable with respect to
\[ \Pi(\Lambda_{2n_L}(v), [-(t+1)n_L, tL] ). \]
implies that $\calL_v$ is a local set for $\Pi$. 

\item Lemma~\ref{lem:decoupling} implies that the value of $\phi(X)_v$ is measurable with respect to $X|_{\mathcal{L}_v}$.

\item Finally, by Lemma~\ref{lem:peierls} together with the stochastic domination given by Lemma~\ref{lem:stoch_dom}, 
\[ \P[|\mathcal{L}_v | > n| \leq C\exp(-cn).\]
\end{itemize}

Thus the measure $\mu^{\SWM}$ is a EFIDD.

\end{proof}

%% file: sections_new/SWM/05_mixing.tex
\section{SWM: Spatial mixing of the Gibbs measure}
\label{sec:spatial_mixing_SWM}

This section contains a proof of Lemma~\ref{lem:spatial_mixing_SWM}.
The quadratic interaction in the Hamiltonian of $\mu_{\Lambda_{2n},\beta}^{+1}$ makes
that we can the associated \emph{Brownian interpolation}.
Let $C_\Lambda\subset\R^d$ denote the \emph{cable graph} of $\Lambda\subset\Z^d$,
and write $\tilde\alpha$ for the Brownian interpolation of $\alpha$ on $C_\Lambda$.
The corresponding Gibbs measure is denoted $\tilde\mu_{\Lambda_{2n},\beta}^{+1}$.
Refer to~\cite{Lupu_2016_LoopClustersRandom} for details.
By flip-symmetry of Brownian motion, we get
\begin{align}
    \Big\|\mu_{\Lambda_{2n},\beta}^{+1}|_{\Lambda_n}-\mu_{\Lambda_{2n},\beta}^{-1}|_{\Lambda_n}\Big\|_{\operatorname{TV}}
    &\leq
    \tilde\mu_{\Lambda_{2n},\beta}^{+1}[\{
        \Lambda_n\xleftrightarrow{\{\tilde\alpha\neq 0\}} \partial\Lambda_{2n}
    \}]
    \\&\leq
    \sum_{x\in\partial\Lambda_n}
    \tilde\mu_{\Lambda_{2n},\beta}^{+1}[\{
        x\xleftrightarrow{\{\tilde\alpha\neq 0\}} \partial\Lambda_{2n}
    \}]
    \\&\asymp
        \sum_{x\in\partial\Lambda_n}
    \mu_{\Lambda_{2n},\beta}^{+1}[\alpha_x].
\end{align}
For Lemma~\ref{lem:spatial_mixing_SWM}, it suffices to prove that this last expression decays exponentially in $n$.
This is proved by comparison with a massive Gaussian free field.
More precisely, we define the massive Gaussian free field
$\nu_{\Lambda,\beta,m}^\zeta$ on $\Lambda\subset\Z^d$ with mass $m\in(0,\infty)$ and inverse temperature $\beta\in[0,\infty)$
as the measure on $\R^\Lambda$ with density
\[
\frac1{Z_{\Lambda,\beta,m}^\zeta} e^{-\beta H_\Lambda^\zeta(\alpha)-m\sum_{u\in\Lambda}\alpha_u^2}\diff\alpha.
\]
Since all interactions are quadratic, this measure is Gaussian,
which means that it is quite easy to make explicit calculations.
The desired result follows from the following two lemmas.

\begin{lemma}
    \label{lem:spatial_mixing_SWM_massive}
    Fix a dimension $d\geq 1$, an inverse temperature $\beta\in[0,\infty)$ and a mass $m\in(0,\infty)$.
    Then there exist a constant $c\in(0,\infty)$ such that for any finite $\Lambda\subset\Z^d$
    and for any $u\in\Lambda$, we have
    \[
        \nu_{\Lambda,\beta,m}^{+1}[\alpha_u]\leq e^{-c\operatorname{Distance}(u,\partial\Lambda)}.
    \]
\end{lemma}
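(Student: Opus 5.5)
The plan is to use the explicit Gaussian structure of $\nu_{\Lambda,\beta,m}^{+1}$. Its mean $h(u):=\nu_{\Lambda,\beta,m}^{+1}[\alpha_u]$ maximises the density, so it solves the Euler--Lagrange equation. Differentiating $\beta H_\Lambda^{+1}(\alpha)+m\sum_u\alpha_u^2$ gives, for every $u\in\Lambda$,
\[
    2\beta\sum_{v\sim u}\big(h(u)-h(v)\big)+2m\,h(u)=0,
\]
with the convention $h\equiv 1$ on $\Lambda^c$ (only neighbours of $\Lambda$ matter). Since $\deg u=2d$, we can rewrite this as
\[
    h(u)=\frac{\beta}{2d\beta+m}\sum_{v\sim u}h(v)=p\cdot\frac1{2d}\sum_{v\sim u}h(v),\qquad p:=\frac{2d\beta}{2d\beta+m}<1 .
\]
Thus $h$ is a killed-harmonic function. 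The first step is to identify it probabilistically. Run a simple random walk $(X_j)$ from $u$ that is killed independently with probability $1-p$ at each step. Let $\tau$ be the exit time from $\Lambda$. Then
\[
    h(u)=\mathbf E_u\big[p^{\tau}\big].
\]
To justify this, note that the right-hand side satisfies the same linear system with the same boundary data. Uniqueness holds because the system is given by a strictly diagonally dominant matrix (equivalently, by the maximum principle, using $p<1$).

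The second step is the estimate. Any path from $u$ to $\Lambda^c$ has length at least $\operatorname{Distance}(u,\partial\Lambda)$, up to an additive $1$ depending on the convention for $\partial\Lambda$. Hence $\tau\geq \operatorname{Distance}(u,\partial\Lambda)-1$ deterministically, and so
\[
    0\le h(u)\le p^{\operatorname{Distance}(u,\partial\Lambda)-1}.
\]
Setting $c:=\tfrac12\log(1/p)$, for instance, absorbs the additive constant once the distance is at least $2$. For smaller distances the trivial bound $h\le 1$ suffices, after shrinking $c$ if needed so that $e^{-c\cdot 1}\ge$ the required value. Strictly, one needs $h\le e^{-c\,\mathrm{dist}}$ with constant $1$. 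If the distance is at least $1$, then $\tau\ge \mathrm{dist}$ already holds with $\partial\Lambda$ the outer boundary, which gives $h\le p^{\mathrm{dist}}$ directly with $c=\log(1/p)$. Notably, $c$ depends only on $d,\beta,m$ and not on $\Lambda$. The case $\beta=0$ is trivial, since then $h\equiv 0$ inside $\Lambda$.

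I expect no real obstacle here. The only points needing care are, first, to check the Gaussian mean identity, namely that the mean equals the minimiser of the quadratic form, which holds by symmetry of a Gaussian around its mode. Second, one must fix the boundary convention so that the constant in front is exactly $1$; this reduces to choosing $\partial\Lambda$ as the outer vertex boundary as above.
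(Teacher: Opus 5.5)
Your proposal is correct and is essentially the paper's proof. The paper also writes $\nu_{\Lambda,\beta,m}^{+1}[\alpha_u]=\E[(1+\tfrac{m}{2d\beta})^{-T}]$, with $T$ the hitting time of $\partial\Lambda$ by a simple random walk started at $u$; this is your $\mathbf E_u[p^{\tau}]$ with $p=\tfrac{2d\beta}{2d\beta+m}$. It then uses $T\ge\operatorname{Distance}(u,\partial\Lambda)$, and your Euler--Lagrange and uniqueness argument just makes explicit what the paper calls ``the Gaussian structure.'' Two small remarks: the hedging about an additive $-1$ is unnecessary, since under either vertex-boundary convention $\tau\ge\operatorname{Distance}(u,\partial\Lambda)$ and so $h\le p^{\operatorname{Distance}(u,\partial\Lambda)}$ directly; and your separate treatment of $\beta=0$ usefully covers a case where the paper's formula is undefined.
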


\begin{lemma}
    \label{lem:spatial_mixing_SWM_comparison}
    Fix a dimension $d\geq 1$ and an inverse temperature $\beta\in[0,\infty)$.
    Then there exists a mass $m\in(0,\infty)$ such that for any finite $\Lambda\subset\Z^d$ and for any $u\in\Lambda$, we have
    \[
        \mu_{\Lambda,\beta}^{+1}[\alpha_u]\leq \nu_{\Lambda,\beta,m}^{+1}[\alpha_u].
    \]
\end{lemma}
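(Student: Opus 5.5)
The plan is to compare the two measures through their single-site conditional expectations and then propagate the comparison through a random-walk expansion of the Glauber dynamics.

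\emph{Step 1: choosing the mass.} Let $D=2d$ and, for a neighbour average $a\in[-1,1]$, let $g(a)$ be the mean of the normal law $\calN(a,\frac1{2\beta D})$ conditioned on $[-1,1]$, as in~\eqref{eq:nu_alpha}. Then $g$ is the SWM conditional mean of $\alpha_u$ given its neighbours. The function $g$ is odd, increasing and smooth. Moreover:
\begin{itemize}
\item truncation pulls the mean towards the origin, so $g(a)<a$ for $a\in(0,1]$;
\item differentiating the truncated Gaussian mean gives $g'(0)=2\beta D\cdot\mathrm{Var}<1$, since conditioning on $[-1,1]$ strictly reduces the variance.
\end{itemize}
The ratio $a\mapsto g(a)/a$ is even and continuous on $[-1,1]$ (with value $g'(0)$ at $0$), and it is strictly below $1$ everywhere. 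By compactness there is $\lambda<1$ with $0\le g(a)/a\le\lambda$ for all $a\neq0$. Now choose $m$ so that $\beta D/(\beta D+m)=\lambda$. For this $m$, the massive GFF has conditional mean exactly $\lambda a$ at every site.

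\emph{Step 2: random-walk representation of both means.} For $\nu^{+1}_{\Lambda,\beta,m}$ the conditional means are linear. Hence $\nu^{+1}_{\Lambda,\beta,m}[\alpha_u]=\mathbf E_u[\lambda^{\tau}]$, where $\tau$ is the exit time of simple random walk from $\Lambda$; the boundary value is $1$ and each step contributes a factor $\lambda$. For the SWM I would write $\alpha_u=\lambda(a_u)\,a_u+\xi_u$ with $\lambda(a):=g(a)/a\in[0,\lambda]$ and $\xi_u$ conditionally centred. I would realise $\mu^{+1}_{\Lambda,\beta}$ as the limit of the Glauber dynamic of Lemma~\ref{lem:glauber_SWM} started from the zero configuration inside $\Lambda$. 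Unrolling the last update at each site backwards in time expresses $\alpha_u(0)$ as a sum of three kinds of terms:
\begin{itemize}
\item over backward random-walk paths that reach $\partial\Lambda$, the product of the factors $\lambda(a)$ along the path times the boundary value $1$;
\item paths that reach the initial time, which carry the initial value $0$ and so contribute nothing;
\item noise terms $\xi$ multiplied by nonnegative products of factors $\lambda(a)$.
\end{itemize}
If the noise terms have zero expectation, then, since every path weight lies in $[0,\lambda^{\text{length}}]$, taking expectations and letting the time horizon tend to infinity gives $\mu^{+1}_{\Lambda,\beta}[\alpha_u]\le\mathbf E_u[\lambda^\tau]=\nu^{+1}_{\Lambda,\beta,m}[\alpha_u]$.

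\emph{Main obstacle.} The difficulty is the noise terms. A factor $\lambda(a)$ evaluated at a later update depends on the noise $\xi$ injected earlier, so $\mathbf E[W\xi]$ need not vanish, where $W$ is the corresponding product of factors. My first attempt would be to order the expansion so that each weight is predictable with respect to the noise it multiplies, by revealing time forwards: $\alpha_u(t)-\lambda\,a_u(t^-)$ should be a supermartingale-type increment whenever $a_u\ge0$.

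This only works cleanly for nonnegative averages, so I would pair it with a sign reduction. Conditionally on $|\alpha|$, the signs of the SWM form a ferromagnetic Ising-type system with $+$ boundary condition. The absolute-value FKG inequality of~\cite{LammersOtt_2024_DelocalisationAbsolutevalueFKGSolidonsolid} would then be used to show that $f(u):=\mu^{+1}_{\Lambda,\beta}[\alpha_u]\ge0$ and that $f$ satisfies the subharmonic-type inequality
\[
f(u)\le\lambda\cdot\frac1D\sum_{v\sim u}f(v).
\]
The maximum principle then compares $f$ with the solution $h$ of $h=\lambda\,\frac1D\sum_{v\sim u} h(v)$ in $\Lambda$ with $h=1$ on $\partial\Lambda$, and this solution is exactly $\nu^{+1}_{\Lambda,\beta,m}[\alpha_u]$. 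Proving the displayed inequality from $g(a)\le\lambda a$ for $a\ge0$ requires controlling the contribution of negative neighbour averages, and I expect this to be the hardest step.
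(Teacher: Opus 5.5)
\textbf{There is a genuine gap: the key inequality is never proved.} Your reduction is sound as far as it goes. The mean $h(u):=\nu^{+1}_{\Lambda,\beta,m}[\alpha_u]$ solves $h(u)=\lambda\frac1D\sum_{v\sim u}h(v)$ in $\Lambda$ with $h=1$ outside. So if $f(u):=\mu^{+1}_{\Lambda,\beta}[\alpha_u]$ (with $f:=1$ outside $\Lambda$) satisfied $f(u)\le\lambda\frac1D\sum_{v\sim u}f(v)$, the maximum principle applied to $f-h$ would finish the proof; you do not even need $f\ge0$. But this inequality is the entire content of the lemma, and you leave it unproved. Step~2 is abandoned because of the correlated noise terms, and the final step is only announced.

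Moreover, the property you isolated in Step~1 is not the one that makes it work. Since $f(u)=\mu^{+1}_{\Lambda,\beta}[g(a_u)]$ and $\frac1D\sum_{v\sim u}f(v)=\mu^{+1}_{\Lambda,\beta}[a_u]$, what you need is $\mu^{+1}_{\Lambda,\beta}[F(a_u)]\ge0$ for the odd function $F(a):=\lambda a-g(a)$. If all you know is $F\ge0$ on $(0,1]$, this asks that the law of the weighted sum $a_u$ charge neighbourhoods of $+s$ at least as much as neighbourhoods of $-s$, for every $s>0$. Neither FKG nor absolute-value FKG gives anything of that sort, because $F(a_u)$ then need not be a monotone function of the signs.

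\textbf{The missing idea: choose $\lambda$ so that $F$ is nondecreasing.} That is, define $\lambda$ through $g'$ rather than through $g(a)/a$. We have $g'(a)=2\beta D\,\mathrm{Var}(\nu_a)$, and a Gaussian restricted to $[-1,1]$ has strictly smaller variance than the unrestricted one for every centre $a\in[-1,1]$ (you already use this at $a=0$). Compactness then gives $\lambda:=\max_{[-1,1]}g'<1$ for $\beta>0$ (the case $\beta=0$ is trivial), and $g(a)\le\lambda a$ for $a\ge0$ follows automatically.

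With this choice the subharmonic inequality follows from plain FKG:
\begin{itemize}
\item Conditionally on $|\alpha|$, the signs form a ferromagnetic Ising model with couplings $2\beta|\alpha_x||\alpha_y|$ and nonnegative external fields coming from the $+1$ boundary.
\item We can write $a_u=\frac1D\big(\sum_{v\in\Lambda,\,v\sim u}|\alpha_v|\,\mathrm{sgn}(\alpha_v)+b_u\big)$, where $b_u\ge0$ is the number of neighbours of $u$ outside $\Lambda$.
\item Holley's inequality (domination over the zero-field model), then dropping $b_u$ (as $F$ is nondecreasing), then the sign-flip symmetry of the zero-field model together with oddness of $F$, give $\mu^{+1}_{\Lambda,\beta}[F(a_u)\mid|\alpha|]\ge0$.
\end{itemize}
Your Glauber unrolling in Step~2 is then not needed at all.

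\textbf{Comparison with the paper.} Completed this way, your route is genuinely different from the paper's, and arguably more elementary. The paper adds a ghost boundary vertex carrying a uniform $\pm1$ spin, and replaces the uniform a priori measure on $[-1,1]$ by $\calN(0,1/2m)$ one vertex at a time. It uses absolute-value FKG to reduce each replacement to a stochastic domination of $|\alpha_u|$, which it obtains from a Radon--Nikodym bound and an FKG bound on $\{|\alpha_u|\le1\}$.
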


Jointly the two lemmas clearly imply the desired exponential decay.
We provide a proof for both lemmas;
a version of the second lemma already appeared in the work of McBryan--Spencer~\cite{McBryanSpencer_1977_DecayCorrelationsInSOnsymmetric}.
We provide an alternative proof using the FKG inequality that may be of independent interest.
This proof makes it slightly easier to handle boundary conditions.

\begin{proof}[Proof of Lemma~\ref{lem:spatial_mixing_SWM_massive}]
    Let $X$ denote a simple random walk in $\Z^d$ started from $u$,
    which measure is written $\P$. 
    Let $T$ denote the first time that $X$ hits $\partial\Lambda$.
    Then the Gaussian structure implies that
    \[
        \nu_{\Lambda,\beta,m}^{+1}[\alpha_u]
        = \E[(1+\tfrac{m}{2d\beta})^{-T}]
        \leq (1+\tfrac{m}{2d\beta})^{-\operatorname{Distance}(u,\partial\Lambda)}.
    \]
    This is the desired result.
\end{proof}

\begin{proof}[Proof of Lemma~\ref{lem:spatial_mixing_SWM_comparison}]
    We present here a variation of the McBryan--Spencer argument.
    We present the argument in the most general context possible.
    Let $G$ denote a finite connected graph,
    and let $g\in V(G)$ denote a fixed vertex
    (we think of $g$ as the boundary of $G$,
    it is typically a vertex with a very large degree).

    Now consider the following setup.
    For any $u\in V(G)$, $\lambda_u$ denotes an arbitrary probability measure on $\R$
    that is symmetric under a sign flip.
    We write $\lambda:=\otimes_u\lambda_u$.
    Then, let $\phi_\lambda$ denote the probability measure on $\R^{V(G)}$
    with density
    \[
        \frac1{Z(\lambda)}e^{-\beta \sum_{uv\in E(G)} (\alpha(u)-\alpha(v))^2 }
        \diffi\lambda(\alpha).
    \]
    Now set $\lambda:=\lambda^A$ where $A\subset V(G)$,
    and
    \[
        \lambda^A_u:=
        \begin{cases}
            (\delta_{+1}+\delta_{-1})/2 &\text{if $u=g$,}\\
            U([-1,1]) &\text{if $u\neq g$ and $u\not\in A$,}\\
            \calN(0,1/2m)&\text{if $u\neq g$ and $u\in A$.}
        \end{cases}
    \]
    Now let $G$ denote the finite graph with vertex set  $V(G)=\Lambda\cup\{g\}$,
    and whose edge set is the one inherited from $\Lambda$.
        To finish the proof, it suffices to prove that
    \[
         \mu_{\Lambda,\beta}^{+1}[\alpha_v]
         =
         \phi_{\lambda^\emptyset}[\alpha_g\alpha_v]
         \leq
         \phi_{\lambda^{V(G)}}[\alpha_g\alpha_v]
         =
         \nu_{\Lambda,\beta,m}^{+1}[\alpha_v],
    \]
    where the equalities are obvious.
    To finish the proof, it suffices the demonstrate that
    \begin{equation}
        \label{eq:desired_domination_per_vertex}
        \phi_{\lambda^A}[\alpha_g\alpha_v]
        \leq
        \phi_{\lambda^{A\cup\{u\}}}[\alpha_g\alpha_v]
    \end{equation}
    for any $A\subset V(G)$ and for any $u\in \Lambda$
    for a suitably chosen $m=m(d,\beta)$.
    
    The rest of the proof is dedicated to deriving Equation~\eqref{eq:desired_domination_per_vertex}.
    By standard results on absolute-value-FKG,
    it is easy to see that Equation~\eqref{eq:desired_domination_per_vertex}
    holds true if
    the law of $|\alpha_u|$ under $\phi_{\lambda^A}$ is stochastically
    dominated by its law under $\phi_{\lambda^A\cup\{u\}}$.
    For this purpose, we compare its law in \emph{three} measures:
    the measures
    \[
        \phi:=\phi_{\lambda^A},\qquad
        \phi':=\phi_{\lambda^{A\cup\{u\}}}\big[\blank\big|\{|\alpha_u|\leq 1\}\big],
        \qquad\text{and}\qquad
        \phi'':=\phi_{\lambda^{A\cup\{u\}}}.
    \]
    It is easy to see how they are related:
    $\phi'$ is a conditioned version of $\phi''$,
    and $\phi'$ may be written as the measure $\phi$ with a Radon--Nikodym derivative
    proportional to
    \[
        e^{-m\alpha_u^2}.
    \]
    Suppose now that the conditioning event (in the definition of $\phi'$) has a $\phi''$-probability
    of at most $1-\epsilon$ for some $\epsilon=\epsilon(d,\beta)>0$.
    If $m=m(\epsilon)$ is sufficiently small, then the Radon--Nikodym derivative
    $\diff\phi'/\diff\phi$ satisfies
    \[
        \diff\phi'/\diff\phi \leq (1-\epsilon)^{-1}.
    \]
    It is then easy to see that the law of $|\alpha_u|$
    in $\phi$ is stochastically dominated by its law under $\phi''$.
    It suffices to find an appropriate $\epsilon$.

    Notice that by FKG, the \emph{conditional} probability of $\{|\alpha_u|\leq 1\}$
    is the \emph{largest} when $\alpha_t=0$ for all neighbours $t\sim u$,
    In that case, the law of $\alpha_u$ is $\calN(0,1/2(2d\beta+m))$,
    where $2d$ is the degree of $u$ in the graph $\Z^d$.
    In particular, the means that (when $m\leq 2d\beta$)
    \[
        \phi''[\{|\alpha_u|\leq 1\}]
        \leq
        \P[\{|X|\leq 1\}]
    \]
    where $X\sim\calN(0,1/8d\beta)$ in $\P$.
    This is the desired uniform bound in terms of $d$ and $\beta$.
\end{proof}

%% file: sections_new/xy/main.tex
\input{sections_new/xy/01_intro.tex}

\input{sections_new/xy/02_glauber.tex}

\input{sections_new/xy/03_spacetime.tex}
\input{sections_new/xy/04_conclusion_efiid.tex}

\input{sections_new/xy/05_mixing.tex}

%% file: sections_new/xy/01_intro.tex
\section{XY: Proof overview and monotone representation}

The spins of the XY model take values in the circle $\S^1\subset\C$.
This is not naturally a (partially) ordered set with a minimal and maximal element.
In this part, we first introduce a representation of the XY model that is appropriately partially ordered.
Then, we introduce a Glauber dynamic that works well with this partial ordering.
Once the Glauber dynamic has been established, we follow the proof steps of Part~\ref{part:SWM}.
The main difference lies in the fact that our Glauber dynamic does not have the Markov property
(Definition~\ref{def:prop:markov}), and we must perform some steps to replace the Markov property
by something slightly weaker (to be more precise:
we show that in equilibrium, the Glauber dynamic is Markov with respect to connected local sets
that have exponential tails).

\subsection{Monotone representation of the XY model}

The representation for the XY model is most easily introduced on finite graphs.
Let $G=(V,E)$ denote a finite graph and fix $\beta\in[0,\infty)$.
The \emph{XY model} on $G$ with inverse temperature $\beta$ is the probability measure $\mu_{G,\beta}$ on $\sigma\in(\S^1)^{V}$
with a density
\[
    \diff\mu_{G,\beta}(\sigma)
    \propto
    e^{-\sum_{uv\in E}\frac\beta2\|\sigma_u-\sigma_v\|_2^2}
    \cdot
    \diff\sigma
\]
with respect to the product Lebesgue measure.

Here, we consider the \emph{coordinate representation} of the XY model,
which was, to the best knowledge of the authors, first introduced by Chayes~\cite{Chayes_stiffness} and later studied in~\cite{DubedatFalconet_2022_RandomClustersVillain}.
Each spin $\sigma_u\in\S^1$ may be written in terms of its two coordinates
(or in terms of its real and imaginary part) via
\[
    \sigma_u = \xi_u\cos\alpha_u + i\cdot \zeta_u\sin\alpha_u\quad\text{where}
    \quad
    \xi_u,\zeta_u\in\{-1,+1\},\,\alpha_u\in[0,\pi/2].
\]
The decomposition is unique unless $\alpha_u\in\{0,\pi/2\}$
which clearly happens almost never in $\mu_{G,\beta}$ as the law of $\alpha_u$
has a density with respect to the Lebesgue measure.

By writing out the Hamiltonian in terms of the triple $(\alpha,\xi,\zeta)$,
it is easy to see that, conditional on $\alpha$, the spins $\xi$ and $\zeta$
behave like two independent Ising models, whose couplings depend on $\alpha$.
We may consider simultaneously the independent FK--Ising couplings
of the two independent Ising models with the associated Fortuin--Kasteleyn (FK) percolation models.
This leads to the coupling of $\alpha$ with two percolations, denoted $\omega$ and $\eta$ below.

Another way to obtain the same coupling $(\alpha,\omega,\eta)$ is by attaching
attaching a Brownian interpolation (in $\C$) to each edge $uv $ of the graph from $\sigma_u$
to $\sigma_v$,
and recording the edges of $G$ where this Brownian interpolation does \emph{not}
hit the imaginary axis (this is $\omega\subset E$) and the edges where the interpolation
does \emph{not} hit the real axis (this is $\eta\subset E$).
Both approaches (which are in fact equivalent) lead to the following lemma.

\begin{lemma}[Coordinate representation]\label{lemma_density_quintuple}
    Let $G$ be a finite graph and $\beta\in[0,\infty)$.
    Define $\mu_{G,\beta}'$ as the probability measure on
    \[
        \tau:=(\alpha,\omega,\eta)\in [0,\tfrac\pi2]^V\times\{0,1\}^E\times\{0,1\}^E:=\Pi_G
    \]
    with density
    \begin{align}
        &
        \diff\mu_{G,\beta}'(\tau)
        \propto
        \textstyle
        \left(\prod_{u\in V}\diff\alpha_u\right)
        \\
        &
        \qquad\qquad
        \textstyle
        \cdot
        2^{k(\omega)}
        \left(
            \prod_{uv\in \omega}
                (1-\e^{-2\beta(\cos\alpha_u)(\cos\alpha_v)})
            \middle)\middle(
            \prod_{uv\not\in\omega}
                e^{-2\beta(\cos\alpha_u)(\cos\alpha_v)}
            \right)
            \\&
            \qquad\qquad
            \textstyle
            \cdot
            2^{k(\eta)}
            \left(
                \prod_{uv\in \eta}
                     (1-\e^{-2\beta(\sin\alpha_u)(\sin\alpha_v)})
                \middle)\middle(
                \prod_{uv\not\in\eta}
                    e^{-2\beta(\sin\alpha_u)(\sin\alpha_v)}
                \right).
    \end{align}
    Here $k(\rho)$ denotes the number of connected components of $(V,\rho)$.

    Now obtain a sample $\sigma\in(\S^1)^V$ as follows:
    \begin{itemize}
        \item Sample $\tau\sim\mu_{G,\beta}'$,
        \item Sample $\xi\in\{-1,+1\}^V$ by flipping a fair coin for each connected component of $\omega$,
        \item Sample $\zeta\in\{-1,+1\}^V$ by flipping a fair coin for each connected component of $\eta$,
        \item Set $\sigma_u:=\xi_u\cos\alpha_u + i\cdot \zeta_u\sin\alpha_u$ for each $u\in V$.
    \end{itemize}
    Then $\sigma\sim\mu_{G,\beta}$.
\end{lemma}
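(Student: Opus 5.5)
The plan is to obtain the law of $(\alpha,\omega,\eta,\xi,\zeta)$ as a joint density, check that its $(\alpha,\xi,\zeta)$-marginal is the XY measure, and identify the $(\alpha,\omega,\eta)$-marginal with $\mu'_{G,\beta}$. The lemma then follows by reading the sampling procedure backwards.

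\emph{Step 1: rewrite the XY density in coordinates.} The map $(\alpha_u,\xi_u,\zeta_u)\mapsto\sigma_u$ is a bijection from $[0,\tfrac\pi2]\times\{\pm1\}^2$ onto $\S^1$, up to a null set. It pushes $\diff\alpha_u\otimes$ counting measure forward to arc-length measure, because each quadrant is parametrised isometrically by $\alpha_u$. Hence the Haar measure corresponds to $\prod_u\diff\alpha_u$ times counting measure on $(\xi,\zeta)$, up to a constant. Next, $\tfrac12\|\sigma_u-\sigma_v\|^2=1-\operatorname{Re}(\sigma_u\bar\sigma_v)$, and
\[
\operatorname{Re}(\sigma_u\bar\sigma_v)=\xi_u\xi_v\cos\alpha_u\cos\alpha_v+\zeta_u\zeta_v\sin\alpha_u\sin\alpha_v .
\]
Therefore the joint density of $(\alpha,\xi,\zeta)$ is proportional to
\[
\prod_u\diff\alpha_u\cdot\prod_{uv\in E}e^{\beta J_{uv}(\xi_u\xi_v-1)}\,e^{\beta K_{uv}(\zeta_u\zeta_v-1)},
\]
where $J_{uv}=\cos\alpha_u\cos\alpha_v\ge0$ and $K_{uv}=\sin\alpha_u\sin\alpha_v\ge0$. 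I subtract the constants $1$ in the exponents on purpose: it makes each edge factor equal to $1$ when the spins agree and $e^{-2\beta J_{uv}}$ when they disagree.

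\emph{Step 2: Edwards--Sokal expansion on each edge.} Write $\ind{}$ for indicators. For each edge,
\[
e^{\beta J(\xi_u\xi_v-1)}=e^{-2\beta J}+(1-e^{-2\beta J})\ind{\xi_u=\xi_v}.
\]
Expanding the product over edges introduces $\omega\subset E$, and the joint weight of $(\alpha,\omega,\xi)$ becomes
\[
\prod_{uv\in\omega}(1-e^{-2\beta J_{uv}})\ind{\xi_u=\xi_v}\prod_{uv\notin\omega}e^{-2\beta J_{uv}}.
\]
The same expansion with $K_{uv}$ in place of $J_{uv}$ introduces $\eta$ for $\zeta$. This yields a joint law on $(\alpha,\omega,\eta,\xi,\zeta)$ whose $(\alpha,\xi,\zeta)$-marginal is the XY measure written as in Step 1.

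\emph{Step 3: marginals and conditional laws.} Summing over $\xi$ that are constant on the clusters of $\omega$ gives the factor $2^{k(\omega)}$. Likewise, summing over $\zeta$ gives $2^{k(\eta)}$. The resulting $(\alpha,\omega,\eta)$-marginal is exactly the density of $\mu'_{G,\beta}$. Conditionally on $(\alpha,\omega,\eta)$, the weight is uniform over admissible $(\xi,\zeta)$, that is, over spin pairs constant on the clusters of $\omega$ and of $\eta$ respectively. This conditional law is precisely independent fair coin flips per cluster. Hence the sampling procedure in the lemma produces the joint law from Step 2, and its $(\alpha,\xi,\zeta)$-marginal, pushed forward to $\sigma$, is $\mu_{G,\beta}$.

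The main point requiring care is Step 1: checking that the Jacobian of the change of variables is constant, and that the degenerate values $\alpha_u\in\{0,\pi/2\}$ are null, so that the decomposition is a.e.\ bijective and no boundary terms arise. Everything else is the standard FK computation.
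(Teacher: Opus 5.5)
Your overall route is the computation the paper leaves to the reader. You write the XY weight in coordinates, recognise two Ising models in $\xi$ and $\zeta$ with $\alpha$-dependent couplings, expand each edge via Edwards--Sokal, and sum out the spins. Your Steps~2 and~3 are correct algebra given Step~1. The problem is Step~1 itself: the quantities you subtract in the exponents are $\beta J_{uv}$ and $\beta K_{uv}$, and these are not constants. Since $J_{uv}+K_{uv}=\cos(\alpha_u-\alpha_v)$, you get $e^{\beta J_{uv}(\xi_u\xi_v-1)}e^{\beta K_{uv}(\zeta_u\zeta_v-1)}=e^{-\beta\cos(\alpha_u-\alpha_v)}\,e^{\beta\operatorname{Re}(\sigma_u\bar\sigma_v)}$. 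So the density you write down equals the XY density times $\prod_{uv\in E}e^{-\beta\cos(\alpha_u-\alpha_v)}$, which is not proportional to it. Consequently the $(\alpha,\xi,\zeta)$-marginal of the joint law you build in Step~2 is not $\mu_{G,\beta}$.

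This is not only a slip in your write-up. The density displayed in the statement is off by exactly this factor, and your Step~1 is what makes the computation land on it. A one-edge check shows this. Summing the displayed weight over $\omega_{uv}$ and $\eta_{uv}$ gives $(2+2e^{-2\beta J})(2+2e^{-2\beta K})=16\,e^{-\beta\cos(\alpha_u-\alpha_v)}\cosh(\beta J)\cosh(\beta K)$. Summing the XY weight $e^{\beta J\xi_u\xi_v+\beta K\zeta_u\zeta_v}$ over $\xi,\zeta$ gives $16\cosh(\beta J)\cosh(\beta K)$. For $\beta>0$ these are not proportional as functions of $(\alpha_u,\alpha_v)$.

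The fix is to expand without normalising: $e^{\beta J\xi_u\xi_v}=e^{-\beta J}+(e^{\beta J}-e^{-\beta J})\ind{\xi_u=\xi_v}$, and likewise with $K$ and $\zeta$. Your Steps~2--3 then go through verbatim. They prove the lemma with the displayed density multiplied by $\prod_{uv\in E}e^{\beta\cos(\alpha_u-\alpha_v)}$, i.e.\ with open/closed edge weights $e^{\beta J}-e^{-\beta J}$ and $e^{-\beta J}$, and similarly for $K$.

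Because the correction depends only on $\alpha$, the conditional law of $(\omega,\eta)$ given $\alpha$ is unchanged, and so is the coin-flipping step. The $\cosh$ formula in Lemma~\ref{lemma_monotonicity_angles} is the one produced by the corrected density; the displayed density would give an extra factor $e^{-\beta\sum_{v\sim u}\cos(\alpha_u-\alpha_v)}$. The Jacobian point you flag is fine; the care was needed elsewhere.
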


\begin{proof}
    This straightforward calculation is left to the reader.
\end{proof}

To construct an EFIID for the XY model, it now suffices to:
\begin{itemize}
    \item Construct an EFIID for the measure the coordinate representation,
    \item Show that all the $\omega$-clusters and $\eta$-clusters have exponential tails.
\end{itemize}

To mimic the strategy of Part~\ref{part:SWM},
it is essential that we can construct a Glauber dynamic that has some monotonicity properties.
This is discussed now.

\begin{definition}[Ordering of triples]
    \label{def:ordering_triples}
    Consider two triples $\tau=(\alpha,\omega,\eta)$ and $\tau'=(\alpha',\omega',\eta')$.
    We say that the first is smaller than the second,
    and write $\tau\preceq\tau'$,
    whenever all of the following conditions hold:
    \[
    \alpha \leq \alpha', \qquad -\omega \leq -\omega', \qquad \eta \leq \eta',
    \]
    where the inequalities are understood componentwise.
    This defines a partial order on $\Pi_G$.
\end{definition}

\begin{lemma}[Monotonicity of angles]
    \label{lemma_monotonicity_angles}
    Fix a finite graph $G$, some $\beta\in[0,\infty)$.
    Let $u$ denote some vertex, and let $E_u$ denote the set of edges incident to $u$.
    Pick
    \[
        \tau'=(\alpha',\omega',\eta')\in [0,\pi/2]^{V\setminus\{u\}}\times\{0,1\}^{E\setminus E_u}\times\{0,1\}^{E\setminus E_u},
    \]
    Let $\mu_{\tau'}$ denote the measure
    $\mu_{G,\beta}$ conditioned on \[\{\tau|_{(V\setminus\{u\})\times (E\setminus E_u) \times (E\setminus E_u)}=\tau'\}.\]
    Then the density of $\alpha_u$ on $[0,\pi/2]$ under $\mu_{\tau'}$
      is proportional to
    \begin{align}
        \textstyle
        \left(
        \prod_{\Gamma\in C(\omega')}
        \textstyle
        2\cosh(\beta\cos\alpha_u\sum_{v \in \Gamma}\cos\alpha_v')
        \right)
        \left(
        \prod_{\Gamma\in C(\eta')}
        \textstyle
        2\cosh(\beta\sin\alpha_u\sum_{v\in \Gamma}\sin\alpha_v')
        \right),
\end{align}
where $C(\rho)$ is the partition of $\{v\in V:v\sim u\}$ into $\rho$-connected components.

In particular, if $\mu^{\alpha_u}_{\tau'}$ denotes the law of $\alpha_u$ under $\mu_{\tau'}$, then
\[
    \tau'\preceq\tau''
    \qquad
    \implies
    \qquad
    \mu^{\alpha_u}_{\tau'}\leqstoch\mu^{\alpha_u}_{\tau''}.
\]
\end{lemma}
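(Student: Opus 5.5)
The plan is to compute the conditional density of $\alpha_u$ by integrating out the edge variables on $E_u$ in the density of Lemma~\ref{lemma_density_quintuple}. I read the conditioning measure as $\mu'_{G,\beta}$. Fix $\tau'$ on $(V\setminus\{u\})\times(E\setminus E_u)\times(E\setminus E_u)$. The $\omega$-factor and the $\eta$-factor in the density of $\mu'_{G,\beta}$ factorise, so they can be treated separately. The $\eta$-part is the same computation as the $\omega$-part with $\cos$ replaced by $\sin$.

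For the $\omega$-part, write $p_v:=1-e^{-2\beta\cos\alpha_u\cos\alpha'_v}$ for $v\sim u$. We sum over $\omega|_{E_u}\in\{0,1\}^{E_u}$ the quantity
\[
2^{k(\omega)}\prod_{uv\in\omega}p_v\prod_{uv\notin\omega}(1-p_v).
\]
Factors not involving $\alpha_u$ or $E_u$ are constant. The dependence of $k(\omega)$ on $\omega|_{E_u}$ is as follows. Let $C(\omega')$ be the partition of the neighbours of $u$ according to their $\omega'$-clusters in $G\setminus\{u\}$. Opening edges from $u$ into $m$ distinct classes of $C(\omega')$, out of the total, gives
\[
k(\omega)=k_0+1-m,
\]
where $k_0$ is the number of clusters of $\omega'$ on $V\setminus\{u\}$.

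Hence the sum factorises over classes $\Gamma\in C(\omega')$. For each $\Gamma$ one gets
\[
\prod_{v\in\Gamma}(1-p_v)+\tfrac12\Big(1-\prod_{v\in\Gamma}(1-p_v)\Big)
=\tfrac12\Big(1+e^{-2\beta\cos\alpha_u\sum_{v\in\Gamma}\cos\alpha'_v}\Big),
\]
up to the global constant $2^{k_0+1}$. Multiplying by $e^{\beta\cos\alpha_u S_\Gamma}$, where $S_\Gamma:=\sum_{v\in\Gamma}\cos\alpha'_v$, turns this into $\cosh(\beta\cos\alpha_u S_\Gamma)$. The compensating factor is $\prod_\Gamma e^{-\beta\cos\alpha_u S_\Gamma}=e^{-\beta\cos\alpha_u\sum_{v\sim u}\cos\alpha'_v}$.

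Doing the same for $\eta$ leaves the leftover factor
\[
e^{-\beta\sum_{v\sim u}(\cos\alpha_u\cos\alpha'_v+\sin\alpha_u\sin\alpha'_v)}=e^{-\beta\sum_{v\sim u}\cos(\alpha_u-\alpha'_v)}.
\]
This factor is \emph{not} constant in $\alpha_u$. I therefore expect the displayed formula to hold with the measure $\diff\alpha_u$ suitably interpreted, or with the normalisation of Lemma~\ref{lemma_density_quintuple} carrying compensating factors. I would recheck this against the $\sigma$-marginal computation (Brownian-interpolation derivation) and adjust the bookkeeping; in the stated form I take the claim as given and focus on monotonicity.

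For the monotonicity claim, write the density as $f_{\tau'}(a)$ for $a\in[0,\pi/2]$. It suffices to show the monotone likelihood ratio property: $a\mapsto f_{\tau''}(a)/f_{\tau'}(a)$ is nondecreasing whenever $\tau'\preceq\tau''$. This gives stochastic domination. The key is to take $\log$ and check that $\partial_a\log f_{\tau'}(a)$ is nondecreasing in $\tau'$.

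The derivative is
\[
\sum_{\Gamma\in C(\omega')}-\beta\sin a\,S_\Gamma\tanh(\beta\cos a\,S_\Gamma)+\sum_{\Gamma\in C(\eta')}\beta\cos a\,T_\Gamma\tanh(\beta\sin a\,T_\Gamma),
\]
where $T_\Gamma:=\sum_{v\in\Gamma}\sin\alpha'_v$. Increasing $\alpha'_v$ decreases each $S_\Gamma\ge0$ and increases each $T_\Gamma$. Since $x\mapsto x\tanh(cx)$ is increasing on $\mathbb{R}_{\geq 0}$, both effects raise the derivative.

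Changing the partitions needs a separate argument. Under $\preceq$, $\omega$ decreases, so $C(\omega')$ becomes finer, while $\eta$ increases, so $C(\eta')$ becomes coarser. We need $g(x):=x\tanh(cx)$ to be superadditive on $\mathbb{R}_{\ge0}$: $g(x+y)\ge g(x)+g(y)$. This holds because $g(x)/x=\tanh(cx)$ is increasing. Then merging classes increases the $\eta$-sum, and splitting classes decreases the subtracted $\omega$-sum. Both raise the derivative.

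The main obstacle is the careful bookkeeping of the leftover factor and the cluster-count computation; the monotonicity step is routine once superadditivity of $x\tanh(cx)$ is noted.
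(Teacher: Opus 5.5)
Your monotonicity step is the paper's argument made explicit. On the totally ordered space $[0,\pi/2]$, the Holley criterion the paper invokes is exactly the monotone likelihood ratio property. Your check of it is correct: monotonicity of $x\mapsto x\tanh(cx)$ handles the dependence on $\alpha'$, and superadditivity handles the refinement of $C(\omega')$ and the coarsening of $C(\eta')$.

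The gap is the first assertion of the lemma, the density formula. You leave it unresolved, and it cannot be set aside, because the monotonicity conclusion depends on it. If your leftover factor $e^{-\beta\sum_{v\sim u}\cos(\alpha_u-\alpha'_v)}$ were really present, the claim would be false. Take $\beta>0$ and $G$ a single edge $uv$. The density of $\alpha_u$ would be proportional to $1+e^{-2\beta\cos\alpha_u}$ (increasing) when $\alpha'_v=0$, and to $1+e^{-2\beta\sin\alpha_u}$ (decreasing) when $\alpha'_v=\pi/2$, which is anti-monotone. Reinterpreting $\diff\alpha_u$ does not help either: Haar measure is exactly Lebesgue measure in $\alpha_u$ on each quadrant, and the factor depends on $\alpha'$.

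The missing observation is that your calculation is right, and the density of Lemma~\ref{lemma_density_quintuple} as printed drops the Edwards--Sokal prefactor.
\begin{itemize}
\item Set $J_{xy}=\beta\cos\alpha_x\cos\alpha_y$ and $K_{xy}=\beta\sin\alpha_x\sin\alpha_y$. The XY weight is proportional to $\prod_{xy\in E}e^{J_{xy}\xi_x\xi_y+K_{xy}\zeta_x\zeta_y}$.
\item Use $e^{J\xi_x\xi_y}=e^{J}\bigl(e^{-2J}+(1-e^{-2J})\mathbf{1}_{\{\xi_x=\xi_y\}}\bigr)$, expand, and sum over $\xi,\zeta$. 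This gives the printed density times $\prod_{xy\in E}e^{J_{xy}+K_{xy}}=\prod_{xy\in E}e^{\beta\cos(\alpha_x-\alpha_y)}$.
\item The factor is forced by that lemma's own conclusion $\sigma\sim\mu_{G,\beta}$. Without it, the $\alpha$-marginal would be tilted by $\prod_{xy}e^{-\beta\cos(\alpha_x-\alpha_y)}$.
\item The edges at $u$ contribute $\prod_{v\sim u}e^{\beta\cos(\alpha_u-\alpha'_v)}$, which cancels your leftover exactly.
\end{itemize}
Your cluster-counting computation then yields $\prod_{\Gamma\in C(\omega')}2\cosh(\beta\cos\alpha_u S_\Gamma)\prod_{\Gamma\in C(\eta')}2\cosh(\beta\sin\alpha_u T_\Gamma)$ up to a constant. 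With this step supplied, your proof of both parts is complete.
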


\begin{proof}
    The formula for the density follows from Lemma~\ref{lemma_density_quintuple} via a calculation.
    For the stochastic domination, one simply checks the Holley criterion.
\end{proof}

\begin{lemma}[Monotonicity of the edges]
    \label{lemma_monotonicity_edges}
    Fix a finite graph $G$ and some $\beta\in[0,\infty)$.
    Let $E'\subset E$ and $uv\in E\setminus E'$.
    Define $G':=(V,E')$.
    For any $\tau'=(\alpha',\omega',\eta')\in\Pi_{G'}$, write
    $\mu^{\omega_{uv}}_{\tau'}$ and $\mu^{\eta_{uv}}_{\tau'}$ for the laws of $\omega_{uv}$ and $\eta_{uv}$ in the conditional measure
    \(
        \mu_{G,\beta}[\blank|\{\tau|_{G'}=\tau'\}]
    \).
    Then
    \[
        \tau'\preceq\tau''
        \qquad
        \implies
        \qquad
        (
        \mu_{\tau''}^{\omega_{uv}}\leqstoch\mu_{\tau'}^{\omega_{uv}}
        \quad\text{and}\quad
        \mu_{\tau'}^{\eta_{uv}}\leqstoch\mu_{\tau''}^{\eta_{uv}}
        )
    \]
Notice also that $\omega$ and $\eta$ are independent in this conditional measure.
Moreover, $\mu^{\omega_{uv}}_{\tau'}$ only depends on the values of $\alpha_u$, $\alpha_v$,
and the partition of $\{x\in V:xy\in E\setminus E'\}$ into $\omega'$-connected components.
The same holds true for $\mu^{\eta_{uv}}_{\tau'}$ (mutatis mutandis).
\end{lemma}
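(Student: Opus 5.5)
The plan is to read off the conditional law directly from the density in Lemma~\ref{lemma_density_quintuple}, and then reduce both monotonicity claims to standard monotonicity of the FK (random-cluster) model with cluster weight $q=2$.

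\textbf{Step 1 (factorisation and independence).} Fix $\alpha$. The density in Lemma~\ref{lemma_density_quintuple} factorises as a function of $(\alpha,\omega)$ times a function of $(\alpha,\eta)$. Hence, conditionally on $\alpha$ (and a fortiori on $\tau|_{G'}$), $\omega$ and $\eta$ are independent. This proves the independence claim. Moreover, given $\alpha$, the law of $\omega$ is an FK measure with $q=2$ and inhomogeneous edge parameters
\[
p^\omega_{xy}=1-e^{-2\beta\cos\alpha_x\cos\alpha_y}.
\]
Similarly, $\eta$ is an FK measure with $q=2$ and parameters
\[
p^\eta_{xy}=1-e^{-2\beta\sin\alpha_x\sin\alpha_y}.
\]
Conditioning further on $\omega|_{E'}=\omega'$ gives, by the domain Markov property of FK measures, an FK measure on the remaining edges $E\setminus E'$. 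In this measure the factor $2^{k(\omega)}$ only sees the partition induced by $\omega'$ on the vertices incident to $E\setminus E'$: vertices joined through $\omega'$ act as wired together. This gives the locality statement. In the basic case $E\setminus E'=\{uv\}$, I would write the formula explicitly:
\[
\mu^{\omega_{uv}}_{\tau'}[\omega_{uv}=1]=
\begin{cases}
p^\omega_{uv} & \text{if } u\xleftrightarrow{\omega'}v,\\[2pt]
\dfrac{p^\omega_{uv}}{p^\omega_{uv}+2(1-p^\omega_{uv})} & \text{otherwise.}
\end{cases}
\]
This depends only on $\alpha_u$, $\alpha_v$ and on whether $u,v$ lie in the same $\omega'$-component. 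The same holds for $\eta$.

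\textbf{Step 2 (monotonicity).} Let $\tau'\preceq\tau''$, so that $\alpha'\le\alpha''$, $\omega'\ge\omega''$ and $\eta'\le\eta''$. Since $\cos$ is decreasing and $\sin$ is increasing on $[0,\pi/2]$, every $p^\omega_{xy}$ is weakly smaller under $\tau''$ than under $\tau'$, and every $p^\eta_{xy}$ is weakly larger. Since $\omega''\subset\omega'$, the boundary partition induced by $\omega''$ is finer than the one induced by $\omega'$, while for $\eta$ it is the reverse. I would then invoke two standard facts for FK measures with $q\ge1$, both proved via the Holley criterion and FKG:
\begin{itemize}
\item the measures are stochastically increasing in the edge parameters $p$;
\item they are stochastically increasing in the boundary partition when it is made coarser (more wired).
\end{itemize}
Together these give $\mu^{\omega_{uv}}_{\tau''}\leqstoch\mu^{\omega_{uv}}_{\tau'}$ and $\mu^{\eta_{uv}}_{\tau'}\leqstoch\mu^{\eta_{uv}}_{\tau''}$. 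In the single-edge case this is elementary: $x\mapsto x/(x+2(1-x))$ is increasing, and for any fixed $p$ the wired probability $p$ dominates the free one. In the general case one applies the two facts successively: first change the parameters, then change the boundary partition.

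\textbf{Main obstacle.} The delicate point is the bookkeeping of what exactly is conditioned on when $E\setminus E'$ contains more than the single edge $uv$. In that situation the law of $\omega_{uv}$ also depends on the parameters $p_{xy}$ of the other free edges, and hence on $\alpha$ at their endpoints. The locality clause should therefore be read as depending on the $\alpha$-values at vertices incident to $E\setminus E'$ together with the $\omega'$-partition of these vertices. I expect to state it in that form, and to note that it reduces to exactly $\alpha_u,\alpha_v$ in the case $E'=E\setminus\{uv\}$ used for the Glauber updates. The monotonicity argument is unaffected by this.
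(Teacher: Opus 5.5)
Your proposal is correct and follows essentially the same route as the paper. You condition on $\alpha$, observe that $\omega$ and $\eta$ are independent FK models with $q=2$ and $\alpha$-dependent edge parameters, and deduce both dominations from standard FK monotonicity in the parameters and in the (wired) boundary partition.

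Your bookkeeping is in fact more accurate than the paper's sketch. First, $p^\omega_{xy}$ is decreasing and $p^\eta_{xy}$ increasing in $\alpha$; the paper's proof states this the other way round, while the lemma's conclusion matches your direction. Second, you are right that when $|E\setminus E'|>1$ the law of $\omega_{uv}$ also depends on $\alpha$ at the other endpoints of $E\setminus E'$. Note, though, that this general case is exactly what arises in the Glauber update, not only $E'=E\setminus\{uv\}$: there all edges at $u$ are erased and resampled, consistent with the almost Markov property.
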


\begin{proof}
    Conditional on $\alpha$, the percolations $\omega$ and $\eta$ are just independent FK percolation models
    with cluster weight $q=2$ and with edge weights depending on $\alpha$.
    Moreover, the edge weights for $\omega$ are increasing in $\alpha$,
    while those for $\eta$ are decreasing in $\alpha$.
    The stochastic domination then simply follows from well-known results for FK percolation.
\end{proof}

\subsection{Statement of the spatial mixing estimate}

Finally, we state our external spatial mixing input that is proved at the end, in Section~\ref{sec:XY_spatial_mixing}.
The boundary conditions $+1,+i\in\S^1\subset\C$ are chosen to represent the minimal and maximal boundary
conditions respectively in the monotone representation introduced above.

\begin{lemma}[Spatial mixing of the Gibbs measure of the XY model]
    \label{lem:spatial_mixing_XY}
    Fix the dimension $d\in\Z_{\geq 1}$ and the inverse temperature $\beta\in[0,\beta_c(d))$.
    Then there exists constants $C<\infty$ and $c>0$ such that
    \begin{enumerate}
        \item For any $n\in\Z_{\geq 1}$,  
        \[\Big\|\mu_{\Lambda_{2n},\beta}^{+1}|_{\Lambda_n}-\mu_{\Lambda_{2n},\beta}^{+i}|_{\Lambda_n}\Big\|_{\operatorname{TV}}
        \leq \frac{C}5e^{-cn}.\]
        
          \item For any $n\in\Z_{\geq 1}$,
           \[     \sup_{\zeta\in\{+1,+i\},\,\nu\in\{\omega,\eta\}}
        \mu_{\Lambda_{2n},\beta}^{\zeta}[\{\Lambda_n\xleftrightarrow{\nu}\partial\Lambda_{2n}\}]
        \leq \frac{C}5e^{-cn}. \]
   
\end{enumerate}

\end{lemma}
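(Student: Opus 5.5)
Both estimates should follow from exponential decay of the two-point function in the subcritical regime, $\beta<\beta_c(d)$, combined with the random-cluster structure of the coordinate representation (Lemma~\ref{lemma_density_quintuple}). We prove item (2) first and then deduce item (1) from it.

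\textbf{Step 1: exponential decay of the percolations.} Conditional on $\alpha$, the configuration $\omega$ is an FK percolation with $q=2$ and edge weights $p_{uv}=1-e^{-2\beta\cos\alpha_u\cos\alpha_v}$. It is the FK representation of the Ising model $\xi$. By the Edwards--Sokal coupling, for the boundary condition $+1$ (where the boundary is wired for $\omega$ and $\xi=+1$ on $\partial\Lambda_{2n}$),
\[
\mu^{+1}_{\Lambda_{2n},\beta}[\{x\xleftrightarrow{\omega}\partial\Lambda_{2n}\}]
=\mu^{+1}_{\Lambda_{2n},\beta}[\xi_x]\geq \mu^{+1}_{\Lambda_{2n},\beta}[\xi_x\cos\alpha_x]=\mu^{+1}_{\Lambda_{2n},\beta}[\operatorname{Re}\sigma_x].
\]
This inequality points the wrong way: it bounds the connection probability from below. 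We want an upper bound, so we use the averaged identity instead:
\[
\mu^{+1}[\operatorname{Re}\sigma_x]=\mu^{+1}\big[\cos\alpha_x\,\mathbf 1\{x\xleftrightarrow{\omega}\partial\}\big].
\]
Two facts then close the argument. First, $\cos\alpha_x$ is bounded below with uniformly positive probability given the rest, by finite energy of $\alpha_x$ at bounded degree. Second, the event $\{x\xleftrightarrow{\omega}\partial\}$ depends on $\alpha_x$ only through the edges at $x$. Together these give
\[
\mu^{+1}[x\xleftrightarrow{\omega}\partial]\leq C\,\mu^{+1}[\operatorname{Re}\sigma_x].
\]
Since $\beta<\beta_c$, the right-hand side is $\le Ce^{-c\,\mathrm{dist}(x,\partial\Lambda_{2n})}$ by the definition of $\beta_c$ and monotonicity of $\mu^{+1}_{\Lambda}[\sigma_0]$ in the domain.

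The other three cases ($\eta$ under $+1$, and $\omega$, $\eta$ under $+i$) are handled as follows. Under $+1$ the boundary spins have $\sin=0$, so $\eta$ sees a free boundary and connections to $\partial$ are impossible. The case $+i$ is symmetric, by the rotation exchanging the two coordinates, which swaps $\omega$ and $\eta$. A union bound over $x\in\partial\Lambda_n$ costs only a polynomial factor $n^{d-1}$, which is absorbed into the constants and proves item (2).

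\textbf{Step 2: TV from disconnection.} To compare $\mu^{+1}$ and $\mu^{+i}$ on $\Lambda_n$, we use that the measure depends on the boundary only through the boundary coupling terms. Write $\sigma=\xi\cos\alpha+i\zeta\sin\alpha$. Under $+1$, $\eta$ is free at the boundary and $\omega$ is wired to the plus spin; under $+i$ the roles are reversed. I would explore from $\partial\Lambda_{2n}$ inward the $\omega$- and $\eta$-clusters touching the boundary. On the event that neither reaches $\Lambda_n$, there is a (random, explorable) region $D\supset\Lambda_n$ whose boundary edges are closed in both $\omega$ and $\eta$. By the domain Markov property of the joint measure $\mu'$, the conditional law inside $D$ is then that of the model with free--free boundary, weighted by factors $e^{-2\beta\cos\alpha_u\cos\alpha_v}e^{-2\beta\sin\alpha_u\sin\alpha_v}$ across $\partial D$ that involve the outside $\alpha$.

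\textbf{Main obstacle.} These outside $\alpha$-values still feel the boundary condition, which is the difficulty. To handle it, I would couple the two measures monotonically using Definition~\ref{def:ordering_triples}: $+1$ is the minimal element and $+i$ the maximal one. Lemmas~\ref{lemma_monotonicity_angles}--\ref{lemma_monotonicity_edges} (FKG via the Holley criterion) give a monotone coupling $\tau^{+1}\preceq\tau^{+i}$. Disagreement at a site $x$ must then propagate through a path along which $\alpha$ is increased or the edge states differ. I would bound the disagreement by the probability that $x$ is connected to $\partial\Lambda_{2n}$ in $\omega^{+1}\cup\eta^{+i}$, via a disagreement-percolation argument, and control that connection using Step 1. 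Justifying this disagreement-percolation step is the delicate part: one must show that, away from clusters touching the boundary, $\alpha$ itself is resampled identically. This needs the fact that, once all edges around a region are closed in both percolations, the conditional law of $\alpha$ depends only on its neighbours' $\alpha$ through bounded factors. That dependence might require an additional high-temperature-type input or a further coarse-graining argument.

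Once the disagreement bound is in place, summing over $x\in\Lambda_n$ gives item (1).
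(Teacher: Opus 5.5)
Your Step~1 is sound and is essentially the paper's argument for item~(2). You compare the connection probability with the one-point function via $\mu^{+1}[\operatorname{Re}\sigma_x]=\mu'[\cos\alpha_x\mathbf 1\{x\xleftrightarrow{\omega}\partial\Lambda_{2n}\}]$ and finite energy of $\alpha_x$. To make that step precise, note that conditionally on all edges and all other angles, $\alpha_x$ gives uniformly positive mass to $[\pi/8,3\pi/8]$. Then $\beta<\beta_c$ and the coordinate-swapping symmetry finish the job.

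The genuine gap is item~(1): Step~2 yields no bound, and not merely because a detail is missing.
\begin{itemize}
\item The Holley criterion supplies a monotone coupling $\tau^{+1}\preceq\tau^{+i}$, naturally built from quantile couplings. Such a coupling has no reason to make the real-valued fields $\alpha^{+1}$ and $\alpha^{+i}$ agree exactly anywhere, so there is no disagreement cluster to control. This is exactly the difficulty that forces the matching-digits construction later in the paper.
\item Even with exact agreement, a separating set of edges closed in both $\omega$ and $\eta$ does not decouple the inside. As you observed, such edges still carry the $\alpha$-interaction $e^{-2\beta\cos(\alpha_u-\alpha_v)}$.
\end{itemize}
The remedies you suggest (a high-temperature input, an unspecified coarse-graining) are not available on all of $[0,\beta_c)$.

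The missing idea is to use the global rotation symmetry \emph{before} passing to the coordinate representation. Rotating all spins by $-\pi/4$ preserves the total variation distance and maps the boundary conditions $+1,+i$ to $e^{-i\pi/4},e^{i\pi/4}$. These have the same angle $\alpha=\pi/4$ and the same sign $\xi=+1$, and differ only in the sign of $\zeta$. By Lemma~\ref{lemma_density_quintuple}, the triple $(\alpha,\omega,\eta)$ and the field $\xi$ then have identical laws under both. Moreover, $\zeta$ can be coupled to agree off the $\eta$-clusters meeting $\partial\Lambda_{2n}$. Hence
\[
\Big\|\mu_{\Lambda_{2n},\beta}^{+1}|_{\Lambda_n}-\mu_{\Lambda_{2n},\beta}^{+i}|_{\Lambda_n}\Big\|_{\operatorname{TV}}
\leq\mu_{\Lambda_{2n},\beta}^{e^{i\pi/4}}\big[\{\Lambda_n\xleftrightarrow{\eta}\partial\Lambda_{2n}\}\big]
\leq C\sum_{x\in\Lambda_n}\mu_{\Lambda_{2n},\beta}^{e^{i\pi/4}}[\operatorname{Im}\sigma_x]
=C\sin(\pi/4)\sum_{x\in\Lambda_n}\mu_{\Lambda_{2n},\beta}^{+1}[\sigma_x].
\]
The middle inequality is your Step~1 argument (union bound plus finite energy), with $\sin\alpha_x$ in place of $\cos\alpha_x$. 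The final equality is rotation invariance, using that $\mu^{+1}_{\Lambda_{2n},\beta}[\sigma_x]$ is real. The paper instead first compares with the $+i$ boundary condition by monotonicity in the boundary angle, then argues the same way. No disagreement percolation is needed.
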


%% file: sections_new/xy/02_glauber.tex
\section{XY: Glauber dynamic}
\label{sec:glauber_XY}

\subsection{Abstract description of the Glauber dynamic}

The following definition of a Glauber dynamic is essentially equivalent to Definition~\ref{def:glauber}.
We must make a small technical modification:
in Definition~\ref{def:glauber} we only encoded a spin $\alpha_u$ at each vertex,
while now we also encode some percolations around each vertex.
In the current section, we impose that a Glauber update resamples the value of $\alpha_u$
at each vertex $u$, as well as all percolation edges incident to $u$.

\begin{definition}[Glauber dynamic]
    \label{def:glauber_XY}
    Consider the square lattice graph $\Z^d$ (also denoted $G=(V,E)$).
    Let $\mu$ denote a stationary measure on $\Pi_G$.
    For any $u\in V$, let $G_u$ denote the subgraph of $G$ induced by the vertices $V\setminus\{u\}$.
    A \emph{Glauber dynamic} for $\mu$ is a function
    \[
        R:\Pi_G\times V\times\Sigma \to \Pi_G,
    \]
    with the following properties.
    \begin{itemize}
        \item\textbf{Local update.}
        For any $\tau\in \Pi_G$, $u\in V$ and $\iota\in\Sigma$, we have $R(\tau,u,\iota)|_{G_u}=\tau|_{G_u}$.
        \item\textbf{Consistency.}
        For any $\tau\in \Pi_G$ and $u\in V$, the distribution of $R(\tau,u,\iota)$ in $\diff\iota$ is $\mu[\blank|\tau|_{G_u}]$.
        \item\textbf{Equivariance.}
        For any $\tau\in \Pi_G$, $u\in V$, $\iota\in\Sigma$ and $\theta\in\Theta$, we have $R(\tau,u,\iota)\circ\theta=R(\tau\circ\theta,u\circ\theta,\iota)$.
    \end{itemize}
    We shall also write $R_{u,\iota}:\Pi_G\to\Pi_G,\,\tau\mapsto R(\tau,u,\iota)$ for any $u\in V$ and $\iota\in\Sigma$.
\end{definition}

We stress again that this definition is essentially equivalent to Definition~\ref{def:glauber}.
Next, we extend the properties of the Glauber dynamic from Definition~\ref{def:glauber} to the current setting.

\begin{definition}[Property~\ref{prop:mono}: Monotonicity]
    \label{def:prop:mono_XY}
    A Glauber dynamic $R$ is \emph{monotone} if for any $u\in V$ and $\iota\in\Sigma$,
    the map $R_{u,\iota}:\Pi_G\to\Pi_G$ preserves the partial order $\preceq$ of Definition~\ref{def:ordering_triples}.
\end{definition}

\begin{definition}[Property~\ref{prop:digits}: Matching digits]
    \label{def:prop:digits_XY}
    Consider fixed constants $\epsilon\in[0,1]$ and $k\in\Z_{\geq 0}$.
    We say that a Glauber dynamic $R$ \emph{matches digits up to $(\epsilon,k)$}
    if we may find an event $M\subset\Sigma$ of $\diff\iota$-probability at least $1-\epsilon$ such that:
    \begin{itemize}
        \item For any $u\in V$ and $\iota\in M$, we have
        \[
            \big\{
                \{R_{u,\iota}(\tau)_{1,u}\}_k=\{R_{u,\iota}(\tau')_{1,u}\}_k \text{ for all $\tau,\tau'\in \Pi_G$}
            \big\}
        \]
        \item For any $u\in V$ and $\tau\in\Pi_G$,
        the event $M$ and the random variable $\lfloor R_{u,\iota}(\tau)_{1,u}\rfloor_k$
        are $\diff\iota$-independent.
    \end{itemize}
    The event $M$ is called the \emph{matching event}.
\end{definition}

For defining the modification of the Markov property enjoyed by the dynamics, it is convenient to introduce the following notation.
For $\tau\in \Pi_G$ and $v \in V(G)$, denote by $\calC^\om_v(\tau)$ (resp. $\calC^\eta_v(\tau)$) the $\om$ (resp $\eta$)-cluster of $v$. 
Also call $\calC^\om_{\calN_v}(\tau)$ (resp. $\calC^\om_{\calN_v}(\tau)$) the union of the $\om$ (resp. $\eta$)-clusters of the neighbours of $v$.

\begin{definition}[Property~\ref{prop:markov}b: almost Markov property]
    
    \label{def:prop:markov_XY}
    We say that a Glauber dynamic $R$ has the \emph{almost Markov property} if for any $\tau\in\Pi_G$, $u\in V$ and $\iota\in\Sigma$,
    the value of $R(\tau,u,\iota)$ at $u$ and on the edges incident to $u$,
    is measurable with respect to $\iota$, together with:
    \begin{itemize}
         \item The data of $\calC^\om_{\mathcal{N}_u}(\tau|_{G_u})$,
        \item The data of $\calC^\eta_{\mathcal{N}_u}(\tau|_{G_u})$
        \item The data of $\alpha|_{\mathcal{N}(u)}$ in $\tau$.
    \end{itemize}
\end{definition}

Compared to Part~\ref{part:SWM} and the case of the SWM, the relaxation of a ``true'' Markov property~\ref{prop:markov} into the ``almost'' Markov property of Definition~\ref{def:prop:markov_XY} is an important change.
Indeed, the arguments relying on the Markov property need to be adapted. 
As an example for the reader, the ``shielding'' argument used to construct the local sets in Section~\ref{sec:efiid_SWM} is not valid anymore: even though a vertex is shielded by a surface of mixed boxes, 
the Glauber dynamics could look for information further away, through the existence of a very long connection in $\omega$ or $\eta$. 
However, those two percolations are proved to have exponentially small clusters, and the arguments can be adapted.

\subsection{Construction of the Glauber dynamic}

\begin{lemma}[Glauber dynamic for the XY model with matching digits]
    \label{lem:glauber_XY}
    Fix $d\in\Z_{\geq 1}$ and $\beta\in\R_{\geq 0}$.
    Then for any $\epsilon>0$, there exists a constant $k\in\Z_{\geq 0}$ and 
    a Glauber dynamic $R$ for the monotone representation of the XY model on $\Z^d$ at inverse temperature $\beta$,
    which has Properties~\ref{prop:mono}, \ref{prop:digits}, and~\ref{prop:markov}b,
    matching digits up to $(\epsilon,k)$.
\end{lemma}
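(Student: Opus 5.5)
The plan is to copy the construction in the proof of Lemma~\ref{lem:glauber_SWM}, adapted to the triple $\tau=(\alpha,\omega,\eta)$. We resample in two stages. First the angle $\alpha_u$ is drawn from its conditional law given $\tau|_{G_u}$. Then the edges of $\omega$ and $\eta$ incident to $u$ are drawn one at a time, given everything already sampled. The randomness $\iota$ will consist of:
\begin{itemize}
\item a uniform variable $U'$, a uniform variable $U$ and a Bernoulli$(\epsilon)$ variable $B$ for the angle;
\item an independent family of uniform variables $(V_e,W_e)_{e\in E_u}$ for the edges, indexed by direction relative to $u$ so that equivariance holds.
\end{itemize}

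\textbf{Angle update.} By Lemma~\ref{lemma_monotonicity_angles}, the conditional density of $\alpha_u$ given $\tau|_{G_u}$ is an explicit product of $\cosh$ factors. It depends only on $\alpha|_{\calN_u}$ and on the partition of the neighbours of $u$ into $\omega$- and $\eta$-clusters of $\tau|_{G_u}$. This partition is determined by $\calC^\omega_{\calN_u}(\tau|_{G_u})$ and $\calC^\eta_{\calN_u}(\tau|_{G_u})$, which gives the almost Markov property for $\alpha_u$. The same lemma says this law is stochastically increasing in $\tau|_{G_u}$ for $\preceq$.

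We take $X'=F^{-1}(U')$ as in Lemma~\ref{lem:grand_coupling} and keep its first $k$ digits. The remaining digits are resampled with Lemma~\ref{lem:grand_coupling_almost_uniform}, driven by $(U,B)$, with matching event $M=\{B=0\}$. For this lemma to apply, the conditional law restricted to a dyadic-decimal interval of length $10^{-k}$ must be at least $(1-\epsilon)$ times uniform. This holds provided the density is bounded above and below by positive constants, uniformly over all $\tau|_{G_u}$, and is uniformly Lipschitz in $\alpha_u$. Both facts should follow from the explicit formula: each $\cosh$ factor lies in $[2,2\cosh(2d\beta)]$, since at most $2d$ neighbours appear, and its logarithmic derivative is bounded by $2d\beta$. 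Hence $k=k(\epsilon,d,\beta)$ can be chosen uniformly, which is exactly what matching digits requires. Independence of $M$ from $\lfloor\cdot\rfloor_k$ holds because $B$ is independent of $U'$.

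\textbf{Edge update.} Given the new $\alpha_u$, we enumerate $E_u$ in a fixed equivariant order. Each $\omega_e$ is set to $\mathbf 1[V_e\le p^\omega_e]$ and each $\eta_e$ to $\mathbf 1[W_e\le p^\eta_e]$. Here $p^\omega_e$ and $p^\eta_e$ are the conditional probabilities given $\alpha$ and the edges already sampled. These are the usual FK$(q=2)$ conditional probabilities: they equal either $1-e^{-2\beta\cos\alpha_u\cos\alpha_v}$ or $\frac{1-e^{-\dots}}{1-e^{-\dots}/2}$, according to whether the endpoints are already connected. So they depend only on $\alpha_u$, $\alpha_v$ and the relevant cluster connectivity, which again gives the almost Markov property. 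Consistency is immediate, because sequential conditional sampling reproduces the joint conditional law $\mu[\cdot\mid\tau|_{G_u}]$.

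\textbf{Monotonicity.} This is where we expect the main obstacle. We must show that if $\tau\preceq\tau'$ then the outputs satisfy $R_{u,\iota}(\tau)\preceq R_{u,\iota}(\tau')$. For the angles this is the monotone coupling above, since the modified-digit step preserves stochastic order by Lemma~\ref{lem:grand_coupling_almost_uniform}. For the edges we use induction along the enumeration. Suppose the partial outputs are ordered. Then $\omega$ on the larger side is smaller, so it has fewer connections, and its angle is larger, so its edge weights are larger. Lemma~\ref{lemma_monotonicity_edges} then gives $p^\omega_e(\text{larger side})\le p^\omega_e(\text{smaller side})$, and the opposite inequality for $\eta$. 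Using the same uniform variable on both sides preserves the order. The delicate point is to check that Lemma~\ref{lemma_monotonicity_edges} applies to the combined comparison, where both the angle and the already-placed edges vary at once. We would verify this directly from the explicit formula for $p_e$. That formula is increasing in the edge weight, and it is decreasing in the connectivity of the endpoints, since a connected pair receives the smaller, bare weight. Both effects push in the correct direction.
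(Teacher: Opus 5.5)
Your construction is the paper's: resample the angle first via the two-stage SWM coupling, then resample the incident edges of $\omega$ and $\eta$ monotonically, reading off the almost Markov property from what the conditional laws depend on. Your uniform bounds on the $\cosh$-density fill in what the paper leaves implicit. However, the edge step, which you single out as the main obstacle, is justified with wrong formulas.

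\emph{Sequential conditionals.} In a one-pass sequential scheme, the probability that $e_j\in E_u$ is open given $\alpha$, $\omega|_{E\setminus E_u}$ and $\omega_{e_1},\dots,\omega_{e_{j-1}}$ must sum out the not-yet-sampled edges $e_{j+1},\dots$. In general it equals the single-edge FK conditional only for the last edge. For instance, let $v_1,v_2\in\calN_u$ already lie in the same $\omega$-cluster, with $p_1=p_2=\tfrac12$. Then $\omega_{e_1}=1$ has probability $2/5$, whereas the rule ``endpoints not yet connected'' gives $p/(2-p)=1/3$. Used literally, your two-case rule breaks consistency.

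\emph{The single-edge formula.} Even this formula is misstated. For FK$(q=2)$, unconnected endpoints give $p/(2-p)=\tanh(\beta\cos\alpha_u\cos\alpha_v)<p$, not $2p/(1+p)$. So the probability is \emph{increasing} in connectivity, not decreasing.

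\emph{Direction of the weights.} The $\omega$-weight $1-e^{-2\beta\cos\alpha_u\cos\alpha_v}$ \emph{decreases} in the angles, so on the larger side the $\omega$-weights are smaller, not larger.

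With the signs you state, both effects push $p^\omega_e$ on the larger side \emph{up}, the opposite of what you need. With the correct signs, both push it down, and symmetrically for $\eta$. So your ``direct verification'' does not establish the inequality it is meant to prove.

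The fix is to avoid explicit formulas altogether. The sequential conditional laws are one-edge marginals of an FK$(2)$ measure conditioned on the already fixed edges, with weights monotone in $\alpha$. FK with $q\ge1$ is monotone both in the boundary condition and in the edge weights. This is exactly Lemma~\ref{lemma_monotonicity_edges}, which is stated for arbitrary $E'$ and arbitrary $\tau'\preceq\tau''$. It therefore already covers simultaneous variation of the angles and the placed edges, so there is no extra delicate point. These marginals depend only on $\alpha_u$, $\alpha|_{\calN_u}$ and the cluster partition of $\calN_u$, so the almost Markov property survives.

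Two smaller points concern the angle step. First, applying Lemma~\ref{lem:grand_coupling_almost_uniform} inside a common decimal cell requires the conditioned laws to remain ordered. That needs the likelihood-ratio order, which is what the Holley check behind Lemma~\ref{lemma_monotonicity_angles} actually yields, not mere stochastic domination. Second, $\pi/2$ is not a decimal, so the last cell of $[0,\pi/2]$ is shorter than $10^{-k}$. Work with $2\alpha_u/\pi$ instead, so that every cell has length $10^{-k}$ and the fractional parts genuinely agree on $M$.
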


\begin{proof}
    The Glauber dynamic is constructed as follows.
    First, one erases the value of $\alpha_u$, as well as $\omega$ and $\eta$ on all edges incident to $u$.
    Then, one first resamples $\alpha_u$ according to the conditional distribution of $\alpha_u$
    given by Lemma~\ref{lemma_monotonicity_angles}.
    This can be done in a monotone fashion (Lemma~\ref{lemma_monotonicity_angles})
    and with matching digits (by following the same strategy as in Subsection~\ref{subsec:construction_glauber_SWM}).
    Then, one simply needs to resample $\omega$ and $\eta$ on all edges incident to $u$ in a monotone fashion;
    this can be done thanks to Lemma~\ref{lemma_monotonicity_edges}.
    By the above construction, the Glauber dynamic clearly satisfies Properties~\ref{prop:mono} and~\ref{prop:digits}.
    Since the conditional law of the new values only depends on $\alpha$
    at the neighbours of $u$, as well as the connectivity of the neighbours of $u$
    in $\omega$ and $\eta$, the almost Markov property (Property~\ref{prop:markov}b) is also automatically satisfied.
\end{proof}

We can import the simultaneous coupling of the Glauber dynamics in any volume, time and boundary conditions given by the coupling-from-the-past of Theorem~\ref{thm:cftp}.
In what follows, the notation $R^{\Lambda, -t,-s}(\tau^{\pm})$ will be used to refer to this construction. 
It will also be convenient to distinguish between the law of the percolation processes $\om$ and $\eta$ and of the field $\alpha$ under this dynamics. 
This is done as follows.

\begin{definition} Fix a subgraph $G$ of $\Z^d$, a configuration $\tau \in \Pi_G$, and $-\infty \leq -t < -s \leq 0$. 
\begin{itemize}
\item The $\om$ (resp. $\eta$)-marginal of $R^{\Lambda, -t, -s}(\tau)$ will be denoted by $R_\om^{\Lambda, -t, -s}(\tau)$ (resp. $R_\eta^{\Lambda, -t, -s}(\tau)$). 
\item The $(\om,\eta)$-marginal of $R^{\Lambda, -t, -s}(\tau)$ will be denoted by $R_{(\om,\tau)}^{\Lambda, -t, -s}(\tau)$.
\item The $\alpha$-marginal of $R^{\Lambda, -t, -s}(\tau)$ will be denoted by $R_\alpha^{\Lambda, -t, -s}(\tau)$. 
\end{itemize}
\end{definition}

Our goal is now to import the structure used in the proof of Theorem~\ref{thm:EFIID_SWM_XY} in Part~\ref{part:SWM}.
The majority of the arguments is preserved, and we shall reprove and adapt the ones that use the exact Markov property.

%% file: sections_new/xy/03_spacetime.tex
\section{XY: Space-time mixing}
\label{sec:spacetime_XY}

Our goal is to prove the analog of Proposition~\ref{prop:space_time_mixing} with our new Glauber map. 
In what follows, with distinguish an edge $e_0$ adjacent to the vertex 0.
It will be convenient to write $\bfz = (0,e_0)$.

We also import all the notation regarding the coupling events $\mathsf{C}$ and $\mathsf{NC}$ introduced in Section~\ref{sec:spacetime}, with the correct adaptation. 
In particular, for any point $v \in \Lambda_n$, any edge $e\in \Lambda_n$,
\begin{itemize}
\item $\mathsf{C}(n,n)$ (resp. $\mathsf{NC}(n,n)$) will refer to $\mathsf{C}(n,n)_\bfz$ (resp. $\mathsf{NC}(n,n)_\bfz$).
\item $\mathsf{C}_{\om,\eta, e}(n,n)$ will refer to the coupling event at the edge $e$, only for the dynamics on $(\om,\eta)$ given by $R_{(\om, \eta)}^{\Lambda_n, -n, 0}(\tau^{\pm})$.
\item $\mathsf{C}_{\alpha, v}(n,n)$ (resp. $\lfloor\mathsf{C}_{\alpha, v}(n,n)\rfloor_k$)  will refer to the coupling event at $v$, only for the dynamics on $\alpha$ (resp. $\lfloor \alpha \rfloor_k$)  given by $R_{\alpha}^{\Lambda_n, -n, 0}(\tau^{\pm})$.
\item Finally, when considering the ``global'' truncated coupling event $\lfloor \mathsf{C}(n,n) \rfloor_k$ and its complement $\lfloor \mathsf{NC} (n,n)\rfloor_k$, the truncation only concerns the field $\alpha$, as $\om$ and $\eta$ already have a finite spin space. 
\end{itemize}

For convenience, we rewrite the target proposition. 

\begin{proposition}\label{prop:space_time_mixing_XY}
There exists $\delta > 0$ such that
\[
\P\big[ \bigcap_{-r\in [-\delta n, 0] } \{ R^{2n, -n, -r}(\tau^+)_{\bfz} \neq \{ R^{2n, -n, -r}(\tau^+)_\bfz  \} \big] \underset{n \rightarrow \infty}{\longrightarrow} 1.
\]
\end{proposition}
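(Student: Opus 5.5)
We read the statement as the XY analogue of Proposition~\ref{prop:space_time_mixing}: with probability tending to one, the maximal and minimal dynamics $R^{\Lambda_{2n},-n,-r}(\tau^\pm)$ agree at $\bfz$, and hence after a union bound on all of $\Lambda_n$, for every $-r\in[-\delta n,0]$. The displayed $\neq$ is a typo for equality of the $\tau^+$ and $\tau^-$ dynamics. We follow Section~\ref{sec:spacetime} step by step. First, the reduction of the proposition to a superpolynomial estimate
\[
n^A\,\P[\mathsf{NC}(n,n(1-\delta))]\to0\quad\text{for all }A>0
\]
goes through verbatim. Lemma~\ref{lem:coupling_long_time} only uses Poisson update counts at $\bfz$ (now counting updates of $0$ and of the other endpoint of $e_0$), monotonicity in time, and a union bound over $\Lambda_n$ and its edges. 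So it remains to establish the analogue of Proposition~\ref{prop:subpolynomial_decay_coupling}, again via Lemma~\ref{lem:tool_superpolynomial_decay}, with $\Psi(n):=\P[\lfloor\mathsf{NC}(n,n)\rfloor_k]$. The truncation acts on $\alpha$ only, so the spin space at a vertex together with its incident edges is finite, of size at most $10^k 4^{2d}$.

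The renormalisation step carries over almost unchanged. We introduce equilibrium configurations $\xi^-\preceq\xi^+$ sampled from the $+1$ and $+i$ boundary measures of the monotone representation. We then split $\phi\le\phi^++\phi^-+$(TV term), using Lemma~\ref{lem:TV} with the finite truncated spin space and item~1 of Lemma~\ref{lem:spatial_mixing_XY}. Next we decompose over the event $\calE$ that $\tilde R^{\Lambda_{n+m},t}(\tau^+)$ and $\tilde R^{\Lambda_{n+m},t}(\xi^+)$ agree on $\Lambda_m$, including edges. On $\calE$ we compare with the worst configurations $\zeta^{+,+}$ and $\zeta^{-,-}$, using monotonicity (Property~\ref{prop:mono}) and spatial mixing. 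On $\calE^c$ we bound by $\phi(n+m,s)$. None of this uses the Markov property, only monotonicity and stationarity of the boundary-condition Gibbs measures under the dynamics.

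The main obstacle is the analogue of Lemma~\ref{lem:tree_eps_perco}, which bounds $\P[\mathsf{NC}(n,n)]$ by the truncated non-coupling probability on a smaller box. Its matching tree relied on the true Markov property, whereas an update at $u$ now reads $\alpha$ on $\calN_u$ together with the full $\omega$- and $\eta$-clusters of the neighbours (Property~\ref{prop:markov}b). Our plan is to enlarge the tree. When the matching event fails at $(-t_1,v)$, we add not only the neighbours but all vertices in $\calC^\omega_{\calN_v}\cup\calC^\eta_{\calN_v}$ of both the maximal and the minimal configurations at time $-t_1$, and recurse from each. Since $\tau^-\preceq\tau\preceq\tau^+$ and $\omega$ is decreasing while $\eta$ is increasing, the relevant clusters are dominated by the $\omega$-clusters of $\tau^-$ and the $\eta$-clusters of $\tau^+$. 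These are bounded at every time by the clusters of the extremal dynamics. We must control their sizes: item~2 of Lemma~\ref{lem:spatial_mixing_XY} gives exponential tails at equilibrium. The extremal dynamics started from $\tau^\pm$ need not be at equilibrium, but the $\omega$-cluster under the minimal dynamics is monotonically dominated by that under the $+1$ boundary Gibbs measure (its stationary law), and similarly for $\eta$ with $+i$. Hence the offspring distribution of the enlarged tree has exponential tails, with a factor $\epsilon$ coming from the matching failure. Choosing $\epsilon$ small makes the tree dominated by a subcritical branching process with exponentially integrable offspring, giving $\P[\calT^M\not\subset\Lambda_{\delta n}\times(-\delta n,0]]\le e^{-c_\epsilon n}$ and a finite expected number of leaves.

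To make this rigorous, the offspring cluster sizes are not independent of the earlier exploration. We would therefore replace exact independence by a stochastic-domination argument: condition on the exploration so far, use the monotone FK-type bounds from Lemma~\ref{lemma_monotonicity_edges} to dominate each new cluster by a cluster in a stationary extremal environment, and union bound over space-time locations in the box. At the leaves, the same argument as in Lemma~\ref{lem:tree_eps_perco} shows that coupling of the truncated $\alpha$ and of the edges at the leaves, combined with matching digits, forces coupling at $\bfz$. The edges are resampled monotonically from data that is already coupled. With this lemma in hand, the renormalisation inequality closes exactly as in the SWM case.
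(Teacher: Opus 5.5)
Your reduction to the superpolynomial bound, the renormalisation step, and the idea of enlarging the matching tree by the $\omega$- and $\eta$-clusters of the neighbours all match the paper. The gap is in how you control the sizes of those clusters.

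\textbf{The monotone comparison goes the wrong way.} You assert that the $\omega$-clusters of the minimal dynamics are dominated by those of its stationary law (the $+1$ Gibbs measure), and likewise for $\eta$ under the maximal dynamics and $+i$. Monotonicity gives the reverse. Write $R$ for $R^{\Lambda_n,-n,-t}$, and let $\xi^-$ be stationary for the dynamics with the minimal boundary condition. Then $R(\tau^-)\preceq R(\xi^-)$. Since $\preceq$ reverses $\omega$, this means $R_\omega(\tau^-)\supseteq R_\omega(\xi^-)$. Symmetrically, $R_\eta(\tau^+)\supseteq R_\eta(\xi^+)$. So the extremal dynamics are the \emph{worst} case. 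At time $-n$ every $\omega$-edge of $\tau^-$ is open, and its clusters relax towards equilibrium only from above.

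A worst-case environment bound through Lemma~\ref{lemma_monotonicity_edges} does not rescue this either. With $\alpha\equiv 0$, the $\omega$-marginal is FK-Ising with $p=1-e^{-2\beta}$. This is supercritical on part of $[0,\beta_c(d))$, for instance in $d=2$ for $\beta$ between the Ising and BKT points. So nothing in your argument gives exponential tails for the offspring of the enlarged tree, and the branching-process domination does not follow.

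\textbf{What is missing.} You need an independent statement that the percolation marginals of the two extremal dynamics have already coupled on the space-time window where the tree lives. The paper gets this from Proposition~\ref{prop:space_time_mixing_omega_eta}. There, Harel--Spinka is applied to the finite-valued $(\omega,\eta)$-marginal, and the bound is extended to a time interval via Lemma~\ref{lem:coupling_long_time}. On that event, the extremal percolation configurations on $(-\delta n,0]\times\Lambda_{\delta n}$ agree with the stationary ones by sandwiching. Their clusters then have the equilibrium law, so item~2 of Lemma~\ref{lem:spatial_mixing_XY} applies. A union bound over the vertices of the tree then keeps the tree inside that window with probability $1-e^{-c_\epsilon \delta n}$. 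The same event also gives the coincidence of the trunk clusters, which is needed to propagate coupling from the leaves to $\bfz$.

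You need to add this input, or build the edge coupling into your induction, for the analogue of Lemma~\ref{lem:tree_eps_perco_XY} to go through.
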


The proof of this statement requires an adequate adaptation. 
We first observe that when forgetting about the field $\alpha$, the argument of Harel--Spinka directly imply the mixing of the dynamics for the marginal on $(\om,\eta)$, as this random variable enters into their framework.\footnote{Actually,the authors of~\cite{harel_spinka} work in the much more general context of \emph{upwards-backwards specifications}, and in particular do not make any use of a form of Markov property.}
As, such the following proposition is a direct consequence of~\cite{harel_spinka}. 

\begin{proposition}\label{prop:space_time_mixing_omega_eta}
There exist $c, C > 0$ and $\delta>0$ such that for any $n\geq 0$, 
\[
\P[ \mathsf{NC}_{(\om,\eta)}(n, -n, \geq -\delta n)] \leq C\e^{-cn}.
\]
\end{proposition}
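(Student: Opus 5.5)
We would deduce the proposition from the Harel--Spinka framework applied to the $(\om,\eta)$-marginal, retracing Section~\ref{sec:spacetime} with the simplification that the spin space is finite. The $(\om,\eta)$-marginal is not a Markov chain on its own, since the Glauber update for the edges incident to $u$ also uses $\alpha_u$ and $\alpha|_{\calN_u}$. We therefore do not need the Markov property, only the monotone coupling-from-the-past structure (Theorem~\ref{thm:cftp}) together with spatial mixing, which is exactly the setting of~\cite{harel_spinka} for monotone upwards-backwards specifications.

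First, we check the inputs of their renormalisation argument. The first input is monotonicity of the joint dynamics for $\preceq$ (Lemma~\ref{lem:glauber_XY}). This gives that $R^{\Lambda,-t,-s}_{(\om,\eta)}(\tau^\pm)$ are respectively monotone in $\Lambda$ and $t$, with $\om$ reversed. The second input is spatial mixing of the edge marginals under the extremal boundary conditions $+1$ and $+i$, namely Lemma~\ref{lem:spatial_mixing_XY}(1) restricted to the edge $e_0$. Since $\om_{e_0},\eta_{e_0}$ take values in a set of size $4$, Lemma~\ref{lem:TV} converts the total variation bound into a disagreement bound with constant $3$.

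Next, we run the decomposition of the proof of Proposition~\ref{prop:subpolynomial_decay_coupling}, using the event $\calE$ that the first half of the dynamics has coupled on $\Lambda_m$. Here the coupling is exact on the full triple $\tau$ and we bound the bad event crudely. Because no $k$-digit truncation or matching tree is needed for the edges, we obtain directly a recursion of the form of Lemma~\ref{lem:tool_superpolynomial_decay}, namely
\[\Psi(2n)\le P(m)\Psi(n)^2+Ce^{-cm},\qquad \Psi(n):=\P[\mathsf{NC}_{(\om,\eta)}(n,n)].\]
This yields superpolynomial decay. Harel--Spinka upgrade this to exponential decay by a second bootstrap that uses spatial mixing at all scales, and we would quote this step from their paper. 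We then pass from fixed time to the interval $[-\delta n,0]$ by the Poisson union bound of Lemma~\ref{lem:coupling_long_time}, where exponential decay absorbs the factor $\delta n$.

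The main obstacle is that $\calE$ involves coupling of $\alpha$ as well, since the edges' conditional laws depend on $\alpha$. Controlling $\P[\calE^c]$ therefore requires coupling of the full triple, which brings back the matching tree of Lemma~\ref{lem:tree_eps_perco}, and that tree relied on the Markov property. We would try to avoid this by defining $\calE$ through the full triple and bounding its complement with the joint quantity $\P[\lfloor\mathsf{NC}(n,n)\rfloor_k]$. The tree argument would then be repaired by enlarging each non-matching node's offspring to its $\om$- and $\eta$-clusters, which are exponentially small by Lemma~\ref{lem:spatial_mixing_XY}(2). If this fails, we would simply invoke~\cite{harel_spinka} as a black box on the pair consisting of $(\om,\eta)$ and $\lfloor\alpha\rfloor_k$ treated as auxiliary data.
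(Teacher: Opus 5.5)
Your primary route has a circular step. The problem is the claim that, after enlarging each non-matching node's offspring to its $\om$- and $\eta$-clusters, these clusters ``are exponentially small by Lemma~\ref{lem:spatial_mixing_XY}(2)''.

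\begin{itemize}
\item \textbf{Wrong measure.} The enlarged tree explores clusters of the non-stationary configurations $R^{\Lambda_n,-n,-r}(\tau^\pm)$. Lemma~\ref{lem:spatial_mixing_XY}(2), by contrast, concerns the Gibbs measures with boundary conditions $+1$ and $+i$.
\item \textbf{Wrong direction.} By Definition~\ref{def:ordering_triples}, the maximal state $\tau^+$ has $\eta\equiv 1$ and the minimal state $\tau^-$ has $\om\equiv 1$. Monotonicity therefore makes the $\eta$-clusters of the $\tau^+$-dynamic, and the $\om$-clusters of the $\tau^-$-dynamic, dominate the stationary ones at all times. The comparison goes the wrong way.
\item \textbf{Circularity.} To transfer the equilibrium bound, you need the $(\om,\eta)$-parts of the two dynamics to have already coupled in the window $(-\delta n,0]\times\Lambda_{\delta n}$. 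That is exactly the event controlled by Proposition~\ref{prop:space_time_mixing_omega_eta} itself.
\item \textbf{No cheap escape.} Dominating by Bernoulli percolation, using that a resampled edge is open with conditional probability at most $1-e^{-2\beta}$, only works for small $\beta$, not on all of $[0,\beta_c)$.
\end{itemize}

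This is precisely where the paper uses the present proposition. In the proof of Lemma~\ref{lem:tree_eps_perco_XY}, it first invokes Proposition~\ref{prop:space_time_mixing_omega_eta} to say that the percolations have reached their invariant measure in the window, and only then applies Lemma~\ref{lem:spatial_mixing_XY}. Your main route reverses this logical order and, as written, assumes its conclusion. Making it work would need a new joint multi-scale argument, which the proposal does not supply.

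The paper's proof is essentially your fallback, stripped down. For $\delta=0$, it applies \cite{harel_spinka} as a black box to the law of $(\om,\eta)$ alone. This is a monotone model with finite spin space whose weak mixing is inherited from the triple; your remark combining Lemma~\ref{lem:spatial_mixing_XY}(1) with Lemma~\ref{lem:TV} is the relevant input. The justification given is that the Harel--Spinka argument uses no Markov property. It then passes to the time window with Lemma~\ref{lem:coupling_long_time}, which does not use the Markov property either.

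No $\lfloor\alpha\rfloor_k$ and no matching tree enter this proof. It is exactly because the proposition is obtained without any control on $\alpha$ that it can afterwards feed the enlarged tree of Lemma~\ref{lem:tree_eps_perco_XY}. You should make this black-box application the main argument.

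One caveat: the issue you raise, that the $(\om,\eta)$-projection of the joint dynamic is not autonomous, is addressed in the paper only by the assertion (see the footnote in Section~\ref{sec:spacetime_XY}) that the Harel--Spinka framework needs no Markov property. That is the claim you would have to be ready to verify.
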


We stress on the fact that this statement does not \emph{a priori} say anything on $\alpha$. 

\begin{proof}[Proof of Proposition~\ref{prop:space_time_mixing_omega_eta}]
The statement for $\delta = 0$ directly from~\cite{harel_spinka} as the law of $(\om,\eta)$ enters their framework of upwards-downwards specifications, is monotone, and inherits the weak mixing property of the whole triple $(\alpha, \om,\eta)$. 
For extending it to the space-time box $\Lambda_n \times (-\delta n, 0]$, we use Lemma~\ref{lem:coupling_long_time} that does not rely on the Markov property of the model. 
\end{proof}

We now explain how to adapt the argument of Section~\ref{sec:spacetime}.
The careful reader might check that the only result of that section relying on the Markov property of the dynamics is Lemma~\ref{lem:tree_eps_perco}.
Indeed, the spin space of the truncated dynamics still is finite, and the Harel--Spinka argument does not rely on the Markov property. Thus, the proof of Proposition~\ref{prop:space_time_mixing_XY} boils down to the following lemma, that we prove right after.

\begin{lemma}~\label{lem:tree_eps_perco_XY}
For $\eps > 0$, for any $\delta\in (0,1)$, there exist $c'_\eps, C'_\eps > 0$ such that 
\begin{itemize}
\item The dynamics has the $(\eps, k)$ matching of the digits property 
\item For any $n\geq 0$, 
\[ \P[\mathsf{NC}(n,n)] \leq \exp(-c'_\eps n) + C'_\eps\P[ \lfloor \mathsf{NC}((1-\delta) n, (1-\delta)n)\rfloor_k ]. \]
\end{itemize} 
\end{lemma}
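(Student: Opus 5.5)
The plan is to redo the matching-tree argument of Lemma~\ref{lem:tree_eps_perco}, replacing the exact Markov property with the almost Markov property (Property~\ref{prop:markov}b). The extra ingredient is control of the $\om$- and $\eta$-clusters, which I will get from Proposition~\ref{prop:space_time_mixing_omega_eta} and Lemma~\ref{lem:spatial_mixing_XY}.

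\emph{Step 1: the tree.} I start from the space-time point $(0,0)$ and build the matching tree $\calT^M$ as in the SWM case, with one change. When the matching event fails at an update time $-t_1$ of a vertex $v$, the children are the vertices whose data the update at $v$ may inspect. By Definition~\ref{def:prop:markov_XY}, these are the vertices of $\calC^\om_{\calN_v}\cup\calC^\eta_{\calN_v}$, computed in the configuration just before $-t_1$. The tree is therefore no longer a branching process with bounded offspring: the offspring depends on cluster sizes. I can only bound these sizes when $(\om,\eta)$ is coupled between the maximal and minimal dynamics. In that case both are sandwiched (Theorem~\ref{thm:cftp}), and each is dominated by an equilibrium-type configuration. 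Its clusters have exponential tails by Lemma~\ref{lem:spatial_mixing_XY}(2), transferred through the CFTP monotonicity.

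\emph{Step 2: good event.} I define a good event $\calG$ requiring two things. First, $(\om,\eta)$ is coupled throughout $\Lambda_{\delta n}\times(-\delta n,0]$; by a union bound and Proposition~\ref{prop:space_time_mixing_omega_eta} (with $n$ replaced by $(1-\delta)n$ and monotonicity in space-time), this fails with probability $e^{-cn}$. Second, every $\om$- and $\eta$-cluster meeting $\Lambda_{\delta n}$ at each update time has diameter at most $\kappa n$, for a small $\kappa$. With polynomially many update times (Poisson), the union bound and Lemma~\ref{lem:spatial_mixing_XY}(2) again give an $e^{-cn}$ cost. On $\calG$, each node of $\calT^M$ has offspring set drawn from the clusters of its neighbours. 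The number of offspring is a random variable whose tails are uniformly exponential, and it is independent of the matching coin at that node. Hence, for $\eps$ small (so that $\eps$ times the mean cluster-offspring is less than $1$), $\calT^M$ is dominated by a subcritical continuous-time branching process. Its spatial displacements have exponential tails, so $\P[\calT^M\not\subset \Lambda_{\delta n}\times(-\delta n,0]]\le e^{-c'_\eps n}$.

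\emph{Step 3: conclusion.} On $\calG\cap\{\calT^M\subset\Lambda_{\delta n}\times(-\delta n,0]\}$, I argue exactly as in Lemma~\ref{lem:tree_eps_perco}. If the first $k$ digits of $\alpha$ coincide at all leaves, the full digits coincide there by the matching event. The percolation coordinates coincide by $\calG$. The almost Markov property then propagates the equality up to $\bfz$. A union bound over the leaves, using the independence of the matching events from $\lfloor\alpha\rfloor_k$ and the finite expected leaf count $C'_\eps$, gives the term $C'_\eps\P[\lfloor\mathsf{NC}((1-\delta)n,(1-\delta)n)\rfloor_k]$.

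The main obstacle is Step 2. The offspring distribution depends on the dynamic configuration, which is correlated with the tree exploration itself. The independence needed for the branching comparison must therefore be extracted carefully. I would first try to dominate the explored clusters by those of the maximal configuration $\tau^+$ evolved from the far past, whose law at each time is stationary. I would then treat the dependence via a BK/union-bound over explored paths rather than a true independence.
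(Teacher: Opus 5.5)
Step~2 has a genuine gap, precisely the one you flag, and your choice of tree creates it. You let every vertex of $\calC^\om_{\calN_v}\cup\calC^\eta_{\calN_v}$ spawn its own subtree, so the offspring of a node becomes a function of the percolation configuration just before its update. That configuration is built from all earlier updates, including the matching coins of the node's own descendants, which lie in its past. So the ``Hence'' in Step~2 does not follow from exponential cluster tails plus independence from the node's own coin. The same correlation undermines Step~3, which needs the event that a point is a leaf to be independent of $\lfloor\cdot\rfloor_k$-non-coupling at that point.

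Your proposed repair does not remove the correlation:
\begin{itemize}
\item By Theorem~\ref{thm:cftp}, the dynamic started from $\tau^+$ at time $-n$ \emph{dominates} the one started in the far past, so the comparison runs the wrong way.
\item $\tau^+$ has all $\om$-edges closed, so it cannot dominate $\om$-clusters.
\item On the coupling event the clusters are simply \emph{equal} to those of a stationary dynamic driven by the same coins, so nothing is decorrelated.
\end{itemize}

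The missing observation is that an update at $v$ reads $\alpha$ only on the $2d$ neighbours of $v$ (Lemmas~\ref{lemma_monotonicity_angles} and~\ref{lemma_monotonicity_edges}). The clusters enter only through the partition of $\calN_v$, which is pure $(\om,\eta)$-information. The paper therefore recurses only on $u\sim v$. It attaches the cluster vertices to failed nodes without exploring them further. The cluster data at trunk nodes is supplied wholesale by the $(\om,\eta)$-coupling of Proposition~\ref{prop:space_time_mixing_omega_eta}.

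The trunk is then exactly the tree of Lemma~\ref{lem:tree_eps_perco}. Its law depends only on the Poisson clocks and the i.i.d.\ matching coins, which are events in $\Sigma$ alone. So the comparison with a branching process of mean $2d\eps<1$ is exact, and the leaf bound is as in the SWM case.

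The attached clusters need only a union bound over all update points in the window, with no independence from the trunk required. The configuration just before a Poisson update has the law of the configuration at a fixed time. On the coupling event it coincides, in the $(\om,\eta)$-coordinates, with a stationary configuration sandwiched between the two extremal dynamics. Those stationary clusters have exponential tails by Lemma~\ref{lem:spatial_mixing_XY}. With this change, your Steps~2 and~3 go through.
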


To prove this statement, we adapt the construction of the matching tree as to also incorporate all the points belonging to the $\om$ and $\eta$ clusters of the neighbours of a point to be resampled at each step. 
This will restore the actual Markov property, and we will check that this tree still has an exponential probability to die out in linear time and space. 
More formally, for a space-time point $(-t,v)\in \R_{\leq 0} \times \Z^d$, construct its ``Markov-matching tree'', still noted $\calT^M(-t,v)$, with the following recursive procedure. 

Start with the space-time point $(-t,v)$ and add it to the tree.
Next, denote by $-t_1$ the first negative time below $-t$ at which $v$ was resampled. 
Again we examine the occurence of the matching event $M(-t_1, v)$. 
\begin{itemize}
\item If $M(-t_1, v)$ occurs, we add $(-t_1, v)$ to the tree. As previously we say that this space-time point is a \emph{leaf} of the tree. 
\item If $M(-t_1, v)$ does not occur, then the following are added to the tree:
\begin{itemize}
\item The vertex $(-t_1, v)$.
\item The vertices of $\calC^\om_{\mathcal{N}_u}(\tau|_{G_u})$,
\item The vertices of $\calC^\eta_{\mathcal{N}_u}(\tau|_{G_u})$,
\item All the trees $\calT^M(-t_1, u)$, for all $u\sim v$.
\end{itemize}
The set of vertices that are either leaves of the tree or that have a subtree attached to them will be called the \emph{trunk} of the tree.
\end{itemize}

As previously, the matching tree of $\bfz$ is simply denoted by $\calT^M$, and its leaf set that is denoted by $\partial \calT^M$. 
This object in hand, we turn to the proof of the lemma.

\begin{proof}[Proof of Lemma~\ref{lem:tree_eps_perco_XY}] 
We claim the following two properties. 
\begin{itemize}
\item On the event $\{\calT^M \subset (-n, 0]\times \Lambda_n \}$, the random variables $R^{\Lambda_n, -n, 0}(\tau^{\pm})_\bfz$ are measurable with respect to: 
\begin{itemize}
\item The data of $R_{\alpha}^{\Lambda_n, -n, -t}(\tau^{\pm})_u$ for all the $(-t,u)$ belonging to the leaves set of the tree.
\item The data of the $\om$ and $\eta$-open clusters of $u$ in $R_{(\om, \eta)}^{\Lambda_n, -n, -t}(\tau^{\pm})$  for all the $(-t,u)$ belonging to the trunk of the tree.
\item The data of $\Pi(\Lambda_n \times (-n,0])$.
\end{itemize}
\item On the event that $\calT^M \subset (-n, 0] \times \Lambda_n$, that the two truncated dynamics for $\alpha^+$ and $\alpha^-$ coincide on the leaves set and the event that the $\om$ and $\eta$ open clusters of all the vertices of the trunk of the Markov tree coincide, 
then the two dynamics for $\alpha$ coincide at the vertex 0 at time 0. 
\end{itemize}

The first statement is a reformulation of the almost-Markov property, and the second consists in the observation that on the leaf set on the tree the dynamics are matched by construction, together with the first property above mentioned.  

Now let $\delta\in (0,1)$ and assume that $\eps > 0$ is sufficiently small, and $k = k(\eps)$ is chosen small enough so that:
\begin{itemize}
\item The dynamics is $(\eps, k)$-matching the digits.
\item There exists $c_\eps>0$ such that 
\begin{multline*} \P[ \{ \calT^M \subset (-\delta n, 0]\times \Lambda_{\delta n}]\} \cap \bigcap_{-r \in (-\delta n, 0] } \{R_{(\om,\eta)}^{-\Lambda_n, -n, -r}(\tau^+)|_{\Lambda_{\delta n}} = R_{(\om,\eta)}^{-\Lambda_n, -n, -r}(\tau^-)|_{\Lambda_{\delta n}}\}\\ \geq 1-\exp(-c_\eps\delta n). \end{multline*}
\end{itemize}
We briefly explain how such a choice can be done.
First, the exponential probability of coincidence of the dynamics for $(\om, \eta)$ in the space-time window $(-\delta n, 0]\times \Lambda_n$ is given by Proposition~\ref{prop:space_time_mixing_omega_eta}. 
Conditionally on this event, the percolations $(\om, \eta)$ reached their invariant measure on $\Lambda_{\delta n}$, so that they exhibit exponential decay of their clusters size by Lemma~\ref{lem:spatial_mixing_XY}.
As the trunk of the tree can be compared with a very subcritical branching process by chosing $\eps > 0$ small enough, the probability that the whole tree is not contained in $(-\delta n, 0]\times \Lambda_{\delta n}$ is exponentially small by a basic union bound on all the vertices of the tree.
The proof is now concluded exactly as in Lemma~\ref{lem:tree_eps_perco_XY}. 
\end{proof}

The reader might now check that once this adaptation is done, the rest of the proof follows as in Section~\ref{sec:spacetime} introducing 
\[ \phi(n, t) := \P[\lfloor \mathsf{NC}(n,n)  \rfloor_k  ] \]
This provides a proof of Proposition~\ref{prop:space_time_mixing_XY}.

%% file: sections_new/xy/04_conclusion_efiid.tex
\section{XY: Construction of the EFIID}
\label{sec:efiid_XY}

The argument is similar to the proof in the case of the SWM,
but the fact the Glauber dynamics in not Markovian anymore needs to be taken into account,
as a space-time surface of mixed boxes around 0 is not sufficient to decouple the state of
$R^{\infty, -\infty, 0}(\tau^{+})_0$ from the Poisson process in the $n_L  $-complement of the surface.

\subsection{Subcriticality of the set of good boxes}

Similarly as in Section~\ref{sec:efiid_SWM}, we fix a coarse-graining scale $L  \geq 0$ and assume for convenience that $\delta^{-1}L  $ is an integer.
We say that a percolation configuration is \emph{crossing} a given box if two opposite faces are linked by an open path.
The notion of mixed point needs to be replaced by the following definition.

\begin{definition}
A point $(t,x) \in L  \cdot (\Z_{\leq 0}\times\Z^d)$ is said to be $(L  ,\delta)$-good is the following three conditions are satisfied:
\begin{itemize}
\item  For all $r \in [-L  (t+1), -L  t]$, the two configurations $R^{\Lambda_{2n_L  }(v), -(t+1)n_L  , -r}(\tau^\pm )$ coincide on the space box $\Lambda_{n_L  }(v)$.
\item For all $-r \in [-L  (t+1), -L  t] $, there exists no $\om$-open cluster of $R^{\Lambda_{2n_L  }(v), -(t+1)n_L  , -r}(\tau^+ )$ crossing the space box $\Lambda_{L  }(v)$.
\item For all $-r \in [-L  (t+1), -L  t] $, there exists no $\eta$-open cluster of $R^{\Lambda_{2n_L  }(v), -(t+1)n_L  , -r}(\tau^+ )$ crossing the space box $\Lambda_{L  }(v)$.
\end{itemize}
If $(t,x)$ satisfies the first condition amongst those three, we still say that it is $(L  ,\delta)$-mixed.
\end{definition}

As previously, introduce the site percolation process on $L   \cdot(\Z_{\leq 0} \times \Z^d)$ defined by 
\[\Theta^{\delta, L  }_{(t,x)} = \ind{ \{ (t,x) \text{ is }(L  , \delta)-\text{good}\}}.\] 

\begin{lemma}\label{lem:stoch_dom_XY}
There exists a sequence $\eps_L  $ tending to 0 when $L  \rightarrow \infty$ such that the law of $\Theta^{\delta, L  }$ stochastically dominates $\P^L  _{1-\eps_L  }$, where $\P^L  _{1-\eps_L  }$ is the law of a Bernoulli bond percolation of parameter $1-\eps_L  $ on the lattice $L  \cdot (\Z_{\leq 0} \times \Z^d)$. 
\end{lemma}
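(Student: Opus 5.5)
The plan follows the proof of Lemma~\ref{lem:stoch_dom}. Two things are needed: finite-range dependence of $\Theta^{\delta,L}$, and a lower bound on its one-point probability that tends to $1$. With both in hand, the Liggett--Schonmann--Stacey theorem~\cite{liggett_schonmann} gives domination of a Bernoulli product measure of parameter $1-\eps_L$ with $\eps_L\to 0$.

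\textbf{Finite-range dependence.} Each of the three conditions defining $(L,\delta)$-goodness of $(t,x)$ only involves the maps $R^{\Lambda_{2n_L}(x),-(t+1)n_L,-r}(\tau^\pm)$ for $r$ in a window of length $L$. These maps are built solely from the Poisson process $\Pi$ restricted to $\Lambda_{2n_L}(x)\times[-(t+1)n_L,-tL]$. This holds even though the dynamic is only almost-Markov. The reason is that the restricted dynamic on $\Lambda_{2n_L}(x)$ has fixed boundary conditions $\tau^\pm$ outside the box, so the $\omega$- and $\eta$-clusters it inspects are clusters of configurations living inside the box. Consequently $\{\Theta_{(t,x)}=1\}$ is measurable with respect to $\Pi$ on a space-time set of diameter $O(\delta^{-1}L)$. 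Hence $\Theta$ is a $K$-dependent field on $L\cdot(\Z_{\le0}\times\Z^d)$ with $K=O(\delta^{-1})$ independent of $L$, which is exactly what Liggett--Schonmann--Stacey requires.

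\textbf{Marginal probabilities.} I would use a union bound:
\[
\P[\Theta_{(t,x)}=0]\le \P[\text{not mixed}]+\P[\omega\text{-crossing}]+\P[\eta\text{-crossing}].
\]
The first term tends to $0$ as $L\to\infty$ by Proposition~\ref{prop:space_time_mixing_XY} at scale $n=n_L$, using translation invariance of $\Pi$. For the crossing terms, the key point is that $R^{\Lambda_{2n_L}(x),-(t+1)n_L,-r}(\tau^+)$ is, for each fixed $r$, distributed as the finite-volume Gibbs measure with boundary condition $+1$ or $+i$ (whichever corresponds to the maximal element in the monotone representation). This holds by stationarity of the dynamic together with coupling from the past, as in Lemma~\ref{lem:property_good_boxes}. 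Lemma~\ref{lem:spatial_mixing_XY}(2) then bounds the probability that a fixed-time configuration has an $\omega$- or $\eta$-cluster crossing $\Lambda_L(x)$ by $C e^{-cL}$. In applying it, a crossing of $\Lambda_L$ forces a long connection, which I would bound by a union bound over starting points on the boundary at polynomial cost $L^{d-1}$.

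\textbf{Main obstacle: uniformity in $r$.} The crossing conditions must hold simultaneously for all $r$ in an interval of length $L$, not just at a fixed time. I would discretise. During the time window, the configuration restricted to $\Lambda_L(x)$ changes only at the Poisson update times of vertices in a bounded neighbourhood of $\Lambda_L(x)$; edges incident to a vertex are updated only when that vertex is. The number $N$ of such updates is Poisson with mean $O(L^{d+1})$, and the configuration is constant between consecutive updates. On each constant piece the configuration is again Gibbs-distributed, since the update times are stopping times of $\Pi$ independent of the past marks. Combining a union bound over at most $N$ pieces with a Chernoff bound on $N$ gives a total crossing probability of order $L^{d+1}\cdot C L^{d-1}e^{-cL}+e^{-L}$, which tends to $0$. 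Together with the mixing term this yields $\eps_L\to0$, and the domination follows.
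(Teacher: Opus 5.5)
Your overall architecture matches the paper's. You show that $\{\Theta_{(t,x)}=1\}$ is measurable with respect to $\Pi$ on $\Lambda_{2n_L}(x)$ times a bounded time window, giving $O(\delta^{-1})$-dependence uniformly in $L$. You bound the one-point probability via Proposition~\ref{prop:space_time_mixing_XY} plus crossing estimates, made uniform in $r$ by a union bound over Poisson update times. Liggett--Schonmann--Stacey then concludes.

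The gap is in the crossing terms. You claim that for each fixed $r$, $R^{\Lambda_{2n_L}(x),-(t+1)n_L,-r}(\tau^+)$ is distributed as the finite-volume Gibbs measure. This is false: it is the dynamic run for a finite time from the deterministic configuration $\tau^+$. For example, with positive probability a given vertex $u$ is never updated, and then $\alpha_u=\pi/2$ exactly, which is a null event under the Gibbs measure. What monotonicity and invariance actually give is that the law of $R(\tau^+)$ dominates the Gibbs measure $\mu^+$ (on $\Lambda_{2n_L}(x)$, boundary condition $\tau^+$) in the order $\preceq$. That is the right direction for $\omega$, which is reversed in $\preceq$. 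It is the wrong direction for $\eta$: the $\eta$-crossing probability of $R(\tau^+)$ is bounded \emph{below} by its Gibbs value, not above. The same error sits in your claim that each constant piece of your time discretisation is Gibbs-distributed. Note also that Lemma~\ref{lem:property_good_boxes}, which you invoke, makes its distributional statement only for boxes that are already mixed.

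The missing step is to route the crossing terms through the mixed event. The paper does this when it says that, conditionally on $(t,x)$ being mixed, $R(\tau^+)$ follows the invariant measure. A precise version goes as follows.
\begin{itemize}
\item Take $\xi^+\sim\mu^+$ independent of $\Pi$, extended by $\tau^+$ outside the box.
\item Then $\tau^-\preceq\xi^+\preceq\tau^+$, so monotonicity gives $R(\tau^-)\preceq R(\xi^+)\preceq R(\tau^+)$ pathwise, for all $r$ simultaneously.
\item On the mixed event the three configurations therefore agree on $\Lambda_{n_L}(x)\supset\Lambda_L(x)$, which determines the crossing events.
\end{itemize}
Hence
\[
\P[\Theta_{(t,x)}=0]\le\P[(t,x)\text{ not mixed}]+\P[\exists r:\ R(\xi^+)\text{ has an $\omega$- or $\eta$-crossing of }\Lambda_L(x)].
\]
Now $R(\xi^+)$ is a genuinely stationary process, and your discretisation over update times applies to it. Each post-update configuration is $\mu^+$-distributed, for instance via the Mecke formula for $\Pi$, since inserting one extra update preserves $\mu^+$. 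With this change the rest of your argument goes through.
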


\begin{proof}

The proof is the same as the corresponding lemma in Part~\ref{part:SWM}, as the conditions on $\om$ and $\eta$ are local. 
Indeed, it is still the case that for any $(t,x) \in L   \cdot(\Z_{\leq 0} \times \Z^d)$, the event $\{\Theta^{L  ,\delta}_{(t,x)} = 1\}$ is measurable with respect to
\[ \Pi(\Lambda_{2n_L  }(v), [-(t+1)n_L  , tL  ] ). \]
This implies that the process $\Theta$ is a $2\delta^{-1} + 1$ dependent process on $L  \cdot (\Z_{\leq 0} \times \Z^d)$, endowed with nearest-neighbour connectivity. 

Furthermore, by the choice of $n_L  $, Proposition~\ref{prop:space_time_mixing_XY} implies that for any $(t,x) \in L  \cdot (\Z_{\leq 0} \times \Z^d)$,
\[\P[ (t,x) \text{ is }(L  ,\delta)\text{-mixed}] \underset{L  \rightarrow 0}{\longrightarrow} 1.\]

Thus, the proof will be concluded by the result of~\cite{liggett_schonmann} provided that we prove that conditionally on fact that $(t,x)$ is $(L  ,\delta)$-mixed, 
then the probability of finding either an $\om$-open crossing cluster or an $\eta$-crossing cluster in $R^{\Lambda_{2n_L  }(v), -(t+1)n_L  , -r}(\tau^+ )$ for some $r\in [(-t+1)L  , -tL  ]$ tends to 0 as $L  $ tends to 0.
We do it for $\om$ and conclude with a union bound. 

Observe that conditionally on the fact that $(t,x)$ is $(L  ,\delta)$-mixed, $R^{\Lambda_{2n_L  }(v), -(t+1)n_L  , -r}(\tau^+ )$ follows the invariant measure $\mu$ by Theorem~\ref{thm:cftp}.
Call $\mathsf{Cross}_L  (x)$ the event that the $\om$-cluster of the point $v\in \Lambda_{L  }(x)$ is crossing.
For a given point $v$ of the box $\Lambda_{n_L  }(x)$, observe that any edge bordering $v$ is resampled $\mathsf{P}$ times in the time interval $[-L  (t+1), -L  t]$, where $\mathsf{P}$ is a Poisson variable of mean $L  $.
Thus, similarly as the proof of Lemma \ref{lem:coupling_long_time}, we may use Chernoff type upper bounds on the tail of a Poisson variable to upper bound the probability that there exists an $\om$-open cluster of $R^{\Lambda_{2n_L  }(v), -(t+1)n_L  , -r}(\tau^+ )$ crossing the box $\Lambda_L  (x)$ by
\[
 \sum_{v \in \Lambda_{n_L  }} \{100 L  \mu[\mathsf{Cross}_L  (v) ] + \exp(-cL  ) \} \underset{L   \to \infty}{\longrightarrow} 0, 
 \]
 as the $\om$-percolation in $\mu$ has exponentially small connection probabilities by Lemma~\ref{lem:spatial_mixing_XY}.
\end{proof}

The set of $(\delta,L  )$-good boxes also enjoy the same set of properties as in Part~\ref{part:SWM}. 

\begin{lemma}\label{lem:property_good_boxes_XY}
Let $(t,x) \in L  \cdot (\Z_{\leq 0} \times \Z^d)$ such that $(t,x)$ is $(L  ,\delta)$-mixed. 
Then, the following statements are true:
\begin{itemize}
\item For any $r \in [-L  (t+1), -L  t]$, the distributions of $R^{\Lambda_{2n_L  }(v), -(t+1)n_L  , -r}(\alpha^\pm)|_{\Lambda_{n_L  }(x)}$ are given by the measure $\mu^{\SWM}_{\Lambda_{2n_L  }}(\cdot|_{\Lambda_{n_L  }(x)})$.
\item For any $r \in [-L  (t+1), -L  t]$, the random variables $R^{\Lambda_{2n_L  }(x), -(t+1)n_L  , -r}(\alpha^\pm)|_{\Lambda_{n_L  }(x)}$ are independent of the Poisson process on the set $(\Lambda_{2n_L  }(x) \times [-(t+1)n_L  , tL  ] )^c$. 
\end{itemize}
\end{lemma}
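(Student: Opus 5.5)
Both items follow from the coupling-from-the-past structure of Theorem~\ref{thm:cftp}, transferred from Lemma~\ref{lem:property_good_boxes} of Part~\ref{part:SWM}. The new features are that the state is the triple $\tau=(\alpha,\omega,\eta)$ with the order $\preceq$ of Definition~\ref{def:ordering_triples}, and the dynamic is only almost Markov. (The statement's $\mu^{\SWM}$ should be read as the coordinate-representation XY measure $\mu'_{\Lambda_{2n_L}(x),\beta}$ of Lemma~\ref{lemma_density_quintuple}; it is written for $\alpha^\pm$ but we prove it for the whole triple $\tau^\pm$.) First I would record that $\tau^+$ and $\tau^-$ are the maximal and minimal elements of $\Pi_G$: all angles equal to $\pi/2$, $\omega\equiv 0$, $\eta\equiv 1$ for $\tau^+$, and the reverse for $\tau^-$. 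By Lemma~\ref{lem:glauber_XY} the update maps $R_{u,\iota}$ are monotone. Hence the three monotonicity bullets of Theorem~\ref{thm:cftp} hold verbatim, because their proof only uses that each $R_{u,\iota}$ preserves the order.

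\textbf{First item.} Fix $r\in[-L(t+1),-Lt]$. Let $s\to\infty$ and consider $R^{\Lambda_{2n_L}(x),-s,-r}$ applied to a configuration $\tau_0$ that is inside $\Lambda_{2n_L}(x)$ a sample of the finite-volume Gibbs measure and is frozen outside. By the consistency property, each update preserves this measure, so the output again has that law. By monotonicity it is sandwiched between the outputs started from $\tau^-$ and $\tau^+$ over the shorter window $[-(t+1)n_L,-r]$, and these coincide on $\Lambda_{n_L}(x)$ on the mixed event. Therefore, on that event, the common value equals a Gibbs-distributed configuration restricted to $\Lambda_{n_L}(x)$. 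Rigorously, I would phrase this as in the SWM case: the quantity is the restriction of a stationary CFTP output, and the conditioning is on an event that is decreasing in the window, hence harmless by the Propp--Wilson argument.

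\textbf{Second item.} The value $R^{\Lambda_{2n_L}(x),-(t+1)n_L,-r}(\tau^\pm)$ is by definition a deterministic function of the initial state $\tau^\pm$ (deterministic) and of $\Pi(\Lambda_{2n_L}(x)\times[-(t+1)n_L,-r])$. Since $-r\le -Lt$, this set is contained in $\Lambda_{2n_L}(x)\times[-(t+1)n_L,tL]$. The Poisson process restricted to disjoint space-time sets is independent, so the claim follows. The almost Markov property plays no role here: the dynamic run in the finite box with frozen boundary never reads the Poisson process outside the box. Clusters of $\omega$ and $\eta$ can only extend to the frozen boundary values, which are deterministic.

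\textbf{Main obstacle.} The delicate point is the first item: one must check that the conditioning on being mixed does not bias the law. I would handle it exactly as in the SWM, by identifying the coupled value with the limit $s\to\infty$ output, which is exactly Gibbs-distributed, rather than conditioning the forward chain.
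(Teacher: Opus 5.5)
Your route is the paper's. Its proof is a one-line appeal to Theorem~\ref{thm:cftp}, and you unpack it correctly. For the first item you use monotonicity of the composed maps plus extremality of $\tau^\pm$ in $\Pi_G$, which gives the sandwich of any stationary start and monotonicity in time within a fixed box. For the second item you use measurability of the box dynamic with respect to the Poisson points in the box. Your second item is fine, including the remark that the almost Markov property plays no role because the exterior is frozen and deterministic.

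The step that fails is your claim that conditioning on $(t,x)$ being mixed does not bias the law (``harmless by the Propp--Wilson argument''). Exactness of CFTP is an unconditional statement. The coalescence event is in general correlated with the output; this is the well-known non-interruptibility of CFTP, and monotonicity of the event in the window does not change it.

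Your own sandwich shows the claim cannot hold here. On the mixed event, the common restriction to $\Lambda_{n_L}(x)$ coincides with the restriction of a stationary sample for \emph{every} frozen exterior, in particular for exterior $\tau^+$ and for exterior $\tau^-$. If conditioning on mixedness were unbiased, the finite-volume measures with these two exteriors would have the same marginal on $\Lambda_{n_L}(x)$. That is false in general: Lemma~\ref{lem:spatial_mixing_XY} only makes them exponentially close.

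What the sandwich proves, and what Theorem~\ref{thm:cftp} actually delivers, is a pathwise statement. On the mixed event, the coupled value on $\Lambda_{n_L}(x)$ equals the restriction of a configuration whose \emph{unconditional} law is the finite-volume Gibbs measure. The first item should be read and proved in that form. Drop the conditioning claim rather than trying to justify it.

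A smaller point: your assertion that all three bullets of Theorem~\ref{thm:cftp} transfer verbatim to triples is too strong. Monotonicity in $\Lambda$ uses that the non-updated region stays at the extreme value. Here, edges between $\Lambda$ and $\Lambda^c$ are resampled by updates inside $\Lambda$. For example, such an $\eta$-edge can close in the small-box chain and be reopened by an exterior update in the large-box chain, so pathwise domination between boxes can fail. Your argument for this lemma only uses the fixed-box properties, so it is unaffected.
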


\begin{proof}
Both of these statements come from the properties of the coupling from the past stated in Theorem~\ref{thm:cftp}.
\end{proof}

\subsection{Construction of the local sets for $\Theta$}

The end of the proof is extremely similar, when noticing that the two additional properties in the definition of $(\delta,L  )$-good boxes allow to decouple the inside and the outside of space-time surfaces of good boxes.

As previously, for any $L   \geq 0$, we endow the space-time set $L  \cdot(\Z_{\leq 0} \times \Z^d)$ with the usual $*$-connectivity: two vertices are considered neighbours when their $L ^\infty$ norm is equal to $L  $ in the underlying graph $\Z_{\leq 0} \times \Z^d$.
In a $\{0,1\}$-valued site percolation process on $L  \cdot(\Z_{\leq 0} \times \Z^d)$, we denote by $\calC$ the 1-cluster of the vertex 0 for the $*$-connectivity, and by $\calC^*$ the 0-cluster of the vertex 0 for the $*$-connectivity.
We start by fixing the value of $L  $ that we shall use to construct the local sets. 

\begin{lemma}\label{lem:peierls_XY}
There exists a value of $L   \geq 0$ large enough and two constants $c,C>0$ such that for any $n \geq 1$, 
\[
\P^L  _{1-\eps_L  }[|\calC^*| > n] \leq C\exp(-cn). 
\]
\end{lemma}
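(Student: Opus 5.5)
The plan is a Peierls counting argument for the $*$-connected $0$-cluster $\calC^*$ of the origin under the product measure $\P^L_{1-\eps_L}$, exactly as in Lemma~\ref{lem:peierls}. Here each site of $L\cdot(\Z_{\leq 0}\times\Z^d)$ is closed (value $0$) independently with probability $\eps_L$, and $\eps_L\to0$ as $L\to\infty$ by Lemma~\ref{lem:stoch_dom_XY}. The underlying graph is a rescaled copy of $\Z_{\leq 0}\times\Z^d$, a subgraph of $\Z^{d+1}$. For $*$-connectivity every vertex has at most $D:=3^{d+1}-1$ neighbours, a number that does not depend on $L$.

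First, I count animals. The number of $*$-connected sets $A$ of size $n$ containing the origin is at most $(eD)^n$; this is the standard bound obtained by exploring a spanning tree of $A$, or via lattice-animal counting. Each fixed such $A$ is entirely closed with probability $\eps_L^{n}$, by independence. A union bound then gives
\[
\P^L_{1-\eps_L}[|\calC^*|\geq n]\leq \sum_{m\geq n} \sum_{A\ni 0,\ |A|=m,\ *\text{-conn.}} \eps_L^{m}\leq \sum_{m\geq n}(eD\eps_L)^m .
\]
To bound $|\calC^*|\geq n$ it suffices to find a closed $*$-connected subset of size exactly $n$ containing $0$, since any connected set contains connected subsets of every smaller size. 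So the inner sum alone, at $m=n$, already suffices.

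Second, I choose $L$ so large that $eD\eps_L\leq 1/2$, which is possible because $\eps_L\to0$. Then the bound is at most $2\cdot 2^{-n}$, giving $C=2$ and $c=\log 2$. The strict inequality $|\calC^*|>n$ follows a fortiori. The convention that the origin itself must be closed for $\calC^*$ to be nonempty is harmless, since otherwise $|\calC^*|=0$.

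There is no genuine obstacle. The only points to check are that $D$ is uniform in $L$, because the coarse-grained lattice is just a scaled copy of the same graph, and that $\P^L_{1-\eps_L}$ is a product measure on sites. The statement of Lemma~\ref{lem:stoch_dom_XY} speaks of ``bond'' percolation, but the construction uses site variables $\Theta_{(t,x)}$. I would read the comparison measure as site percolation; the same counting works for bonds in any case.
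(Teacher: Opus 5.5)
Your proof is correct and follows the paper's route, which is just a one-line appeal to a standard Peierls (lattice-animal counting) argument: the number of $*$-connected sets of size $n$ containing the origin is at most $(eD)^n$, with $D=3^{d+1}-1$ independent of $L$, so any $L$ with $eD\eps_L\le 1/2$ gives the exponential bound. You were also right to read $\P^L_{1-\eps_L}$ as a product measure on sites rather than bonds, since $\Theta$ is a site process.
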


\begin{proof}
This follows from a standard Peierls argument for Bernoulli bond percolation of small parameter $\eps_L  $ in the graph $L  \cdot(\Z_{\leq 0} \times \Z^d)$.
\end{proof}

{\bf The value of $L  $ is now fixed and given by Lemma~\ref{lem:peierls_XY}, as well as the values of the constants $c,C$ > 0.}
We will also drop the dependency in $L  $ and in $\delta$ in the notation $\Theta^{\delta, L  }$. 

We now call $\calC^*$ the 0-cluster of the vertex (0,0) in the process $\Theta$.
The key input for constructing the local sets for $\alpha_0$ is the following observation (remember the notion of $N$-external complement that was introduce in Section~\ref{sec:efiid_SWM}).

\begin{lemma}\label{lem:decoupling_XY}
The random variable $R^{\infty, -\infty, 0}(\alpha^+)_0$ is independent of the Poisson point process $\Pi$ restricted to the set $\partial^{2n_L  }_{\mathrm{ext}} \mathcal{C}^*\times \R_{<0} $, where we recall that $n_L   = \lceil \delta^{-1}L   \rceil$. 
\end{lemma}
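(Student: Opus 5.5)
We mimic the proof of Lemma~\ref{lem:decoupling}: the state at the space-time origin is computed by an ``information flow'' backwards in time, and we show that this flow can never cross the external boundary of $\calC^*$. The difference with the SWM case is that, by the almost Markov property (Definition~\ref{def:prop:markov_XY}), an update at $(s,u)$ may read information along the $\om$- and $\eta$-clusters of the neighbours of $u$. These clusters can a priori be arbitrarily long. We use the two extra conditions in the definition of $(L,\delta)$-good points to prevent such clusters from leaking through the shielding surface.

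\textbf{Step 1: the shielding surface.} Let $\mathsf S$ be the set of good points of $L\cdot(\Z_{\leq 0}\times\Z^d)$ that are $*$-adjacent to $\calC^*$ and separate $\calC^*$ from $\partial^{2n_L}_{\mathrm{ext}}\calC^*$. By Lemma~\ref{lem:peierls_XY}, $\calC^*$ is almost surely finite, so $\mathsf S$ is a closed $*$-connected surface of good points enclosing $(0,0)$ in space-time, together with the time-$0$ boundary. If $(0,0)$ is itself good, $\calC^*=\emptyset$ and $\mathsf S$ reduces to the boxes around the origin; this case is handled identically.

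\textbf{Step 2: freezing on the surface.} For each $(t,x)\in\mathsf S$ and each $r\in[-L(t+1),-Lt]$, the first condition and Lemma~\ref{lem:property_good_boxes_XY} give two facts. First, the configuration on $\Lambda_{n_L}(x)$ equals the infinite-volume value $R^{\infty,-\infty,-r}(\tau^+)$, by the CFTP sandwich of Theorem~\ref{thm:cftp}. Second, this value is a function of $\Pi$ restricted to $\Lambda_{2n_L}(x)\times[-(t+1)n_L,tL]$. Because $\inf\|\cdot\|_\infty>2n_L L$-type separation holds in the definition of $\partial^{2n_L}_{\mathrm{ext}}$, this set is disjoint from $\partial^{2n_L}_{\mathrm{ext}}\calC^*\times\R_{<0}$. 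The second and third conditions say that no $\om$- or $\eta$-cluster of this configuration crosses $\Lambda_L(x)$. Since the boxes $\Lambda_L(x)$ for $(t,x)\in\mathsf S$ tile a spatial ``annulus'' at each time level of the surface, every $\om$- or $\eta$-cluster that touches a vertex inside the surface must stay within the interior plus the boxes of $\mathsf S$ during the relevant time window.

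\textbf{Step 3: backwards induction.} Enumerate the (almost surely finitely many, by local finiteness) Poisson updates inside the region enclosed by $\mathsf S$, in increasing time order. We claim by induction that each updated value is measurable with respect to $\Pi$ restricted to the interior together with the $\Lambda_{2n_L}$-neighbourhoods of $\mathsf S$. An update at $(s,u)$ in the interior reads, by Property~\ref{prop:markov}b, three things: $\iota$; $\alpha$ on $\calN_u$; and the clusters $\calC^\om_{\calN_u},\calC^\eta_{\calN_u}$ of the current configuration. By Step 2, these clusters lie in the interior or in boxes of $\mathsf S$. Interior values are measurable by the induction hypothesis, and values in $\mathsf S$-boxes are measurable by Step 2. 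Applying this at the final update determining $R^{\infty,-\infty,0}(\alpha^+)_0$ gives measurability with respect to $\Pi$ off $\partial^{2n_L}_{\mathrm{ext}}\calC^*\times\R_{<0}$. Independence then follows, provided $\calC^*$ is itself determined by the same information. This holds because $\calC^*$ and $\mathsf S$ are local sets: each $\Theta_{(t,x)}$ is measurable with respect to $\Pi(\Lambda_{2n_L}(x),[-(t+1)n_L,tL])$, so the event $\{\calC^*=C\}$ is measurable with respect to $\Pi$ on the region around $C$ and is independent of the Poisson process on $\partial^{2n_L}_{\mathrm{ext}}C$. Conditioning on $\{\calC^*=C\}$ and summing over $C$ concludes the argument.

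\textbf{Main obstacle.} The delicate point is Step 2's claim that the clusters read by an interior update cannot escape through $\mathsf S$. Clusters are evaluated for the \emph{current} configuration, which mixes interior values with surface values. The no-crossing conditions are stated for the configuration $R^{\Lambda_{2n_L}(x),\ldots}(\tau^+)$ on the box, whereas the actual configuration equals it only on $\Lambda_{n_L}(x)$, thanks to coupling. So I would first check that $\Lambda_L(x)\subset\Lambda_{n_L}(x)$, which holds since $n_L=\delta^{-1}L\ge L$, in order to transfer the no-crossing property. I would also verify that the $*$-connectivity of $\mathsf S$ yields a genuinely closed barrier of boxes $\Lambda_L(x)$ at every intermediate time $r$, so that a cluster starting inside cannot exit without crossing one box.
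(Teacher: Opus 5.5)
Your proposal is the paper's argument written out in more detail. The paper's proof is a short sketch of exactly this shielding:
\begin{itemize}
\item a surface of good boxes around $\calC^*$;
\item Lemma~\ref{lem:property_good_boxes_XY} for the values on the surface;
\item the second and third conditions of a good box, to stop the $\om$- and $\eta$-clusters read under Definition~\ref{def:prop:markov_XY}.
\end{itemize}
Your time-ordered induction, the transfer of the no-crossing property through $\Lambda_L(x)\subset\Lambda_{n_L}(x)$, and the local-set measurability of $\calC^*$ make explicit what the paper leaves implicit.

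However, the step you single out as the main obstacle is a genuine gap. The paper also asserts it without proof, and it cannot be closed by the verification you propose. With ``crossing'' meaning, as in the paper, that two opposite faces of $\Lambda_L(x)$ are linked by an open path, the boxes of $\mathsf S$ are not a barrier, and the confinement claim of your Step~2 is false. Here is a counterexample. Write $\Lambda_{L/2}(x)$ for the cell of a coarse point $x$, and suppose $\calC^*$ is reduced to the origin. Let $u$ lie in the cell of the origin, and let $w\sim u$ lie in the half of that cell facing the good point $x_1=Le_1$. Since cells have radius $L/2$ and boxes radius $L$, $w$ is strictly inside $\Lambda_L(x_1)$. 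Take an open straight segment from a generic such $w$ in direction $e_1$. It starts inside every box $\Lambda_L(x_i)$, $x_i\in\mathsf S$, that it meets, and leaves each of them through a single face, so it crosses none of them. Nothing then prevents it from running beyond $\bigcup_i\Lambda_{n_L}(x_i)$ into bad cells outside $\mathsf S$, whose state may depend on $\Pi$ on $\partial^{2n_L}_{\mathrm{ext}}\calC^*$. L-shaped paths do the same for clusters coming from deep inside a larger $\calC^*$. So no appeal to the $*$-connectivity of $\mathsf S$ can rescue a closed-barrier statement.

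The remedy is to replace the crossing conditions by one-arm conditions, which is exactly the event controlled by Lemma~\ref{lem:spatial_mixing_XY}(2). Require that for all $r$ in the window, no $\om$-open (resp.\ $\eta$-open) path of the local configuration joins $\Lambda_L(x)$ to $\partial\Lambda_{2L}(x)$. Take $\delta\le 1/2$ so that $\Lambda_{2L}(x)\subset\Lambda_{n_L}(x)$. Then no barrier is needed at all:
\begin{itemize}
\item Since $\calC^*$ is a $*$-cluster of bad points, every cell at the same time level adjacent to a cell of $\calC^*$ is in $\calC^*$ or good.
\item Hence each excursion of a read cluster out of the cells of $\calC^*$ starts in the cell of some good $x_i$, and so in $\Lambda_L(x_i)$.
\item On $\Lambda_{n_L}(x_i)$ the actual configuration agrees with the local one, so the one-arm condition keeps the excursion inside $\Lambda_{2L}(x_i)$ until it returns.
\end{itemize}
Everything read by an update in a cell of $\calC^*$ therefore lies either in cells of $\calC^*$ or in regions frozen by Lemma~\ref{lem:property_good_boxes_XY}, and your Step~3 induction goes through. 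Lemma~\ref{lem:stoch_dom_XY} is unaffected: by Lemma~\ref{lem:spatial_mixing_XY}(2) and the same Poisson union bound as in its proof, the probability of such an arm during the window still tends to $0$ as $L\to\infty$.
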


\begin{proof}
The lemma follows by the observation that the external boundary of $\calC^*$ consists in a space-time surface of mixed boxes shielding 0 from $\partial^{n_L}_{\mathrm{ext}}C^*$.
When updating the state of a point on the interior of that surface, the Glauber rule inspects the value of:
\begin{itemize}
\item  $\alpha$ on the neighbours of a point on the interior of the surface, and the $\eta$ and $\om$-clusters of these neighbours, wich are \emph{a priori} non-local. 
However, the second and third conditions on the definition of a $(\delta, L)$-good box prevent those connections to cross the surface. 
It means that in any case, the randomness that has been explored lies:
\begin{itemize}
\item On the interior of the surface, in which case it is independent of the Poisson point process on $\partial^{2n_L  }_{\mathrm{ext}} \mathcal{C}^*\times \R_{<0} $
\item In the surface, in which case it is independent of the Poisson point process on $\partial^{2n_L  }_{\mathrm{ext}} \mathcal{C}^*\times \R_{<0} $ by construction. 
\end{itemize}
We conclude that this update is independent of the Poisson point 
\item  Or $(\alpha, \omega, \eta)$ inside of the surface, which by assumption is independent of the Poisson point process on $\partial^{2n_L  }_{\mathrm{ext}} \mathcal{C}^*\times \R_{<0} $ by construction.
\end{itemize}  
As $(0,0)$ is contained in the space-time surface, the proof is complete.  
\end{proof}

We conclude by explaining how this property implies Theorem~\ref{thm:EFIID_SWM_XY} in the case of the XY model. 

\begin{proof}[Proof of Theorem~\ref{thm:EFIID_SWM_XY} for the XY model]

The proof is exactly the same as in Part~\ref{part:SWM}, by setting
\begin{itemize}
\item $(X_v)_{v\in\Z^d} =\Pi(\{v\}, \R_{\leq 0})_{v\in\Z^d}$,
\item $\phi(X)_v := R^{\infty, \infty, 0}(\sigma^{+1})_v$,
\item $\calL_v := \Z^d \setminus \pi (\partial^{n_L  }_{\mathrm{ext}}C^*_v)$, 
where $\pi : \Z_{\leq 0}\times\Z^d \to \Z^d, (t,x) \mapsto x$ is the spatial projection.
\end{itemize}

The reader might check that the arguments follow \emph{mutatis mutandis}.
Thus the measure $\mu^{+1}_{\XY}$ is a EFIDD.

\end{proof}

%% file: sections_new/xy/05_mixing.tex
\section{XY: Spatial mixing of the Gibbs measure}
\label{sec:XY_spatial_mixing}
\begin{proof}[Proof of Lemma~\ref{lem:spatial_mixing_XY}]
By rotating the system by an angle of $-\pi/4$, it suffices to prove that
\[
        \Big\|\mu_{\Lambda_{2n},\beta}^{e^{i\pi/4}}|_{\Lambda_n}-\mu_{\Lambda_{2n},\beta}^{e^{-i\pi/4}}|_{\Lambda_n}\Big\|_{\operatorname{TV}}
        \leq Ce^{-cn}.
\]
But in the coordinate representation,
these boundary conditions differ by flipping the sign of $\zeta$.
Thus, the above total variation estimate is bounded by
\[
    (\mu_{\Lambda_{2n},\beta}^{e^{i\pi/4}})'[\{\Lambda_n\xleftrightarrow{\eta}\partial\Lambda_{2n}\}].
\]
By the monotonicity properties, this is upper bounded by
\[
    (\mu_{\Lambda_{2n},\beta}^{i})'[\{\Lambda_n\xleftrightarrow{\eta}\partial\Lambda_{2n}\}].
\]
Just like at the beginning of Section~\ref{sec:spatial_mixing_SWM}, we now get
\begin{align}
    \Big\|\mu_{\Lambda_{2n},\beta}^{e^{i\pi/4}}|_{\Lambda_n}-\mu_{\Lambda_{2n},\beta}^{e^{-i\pi/4}}|_{\Lambda_n}\Big\|_{\operatorname{TV}}
    &
    \leq \sum_{x\in\partial\Lambda_n}(\mu_{\Lambda_{2n},\beta}^{+i})'[\{x\xleftrightarrow{\eta}\partial\Lambda_{2n}\}]
    \\
    &\asymp \sum_{x\in\partial\Lambda_n}\mu_{\Lambda_{2n},\beta}^{+1}[\sigma_x].
\end{align}
But this decays to zero exponentially fast in $n$, by the definition of $\beta_c=\beta_c(d)>\beta$.
This also implies the desired exponential decay of the connection probabilities in the statement of the lemma.
\end{proof}

%% file: sections_new/coulomb/main.tex
\input{sections_new/coulomb/01_intro.tex}

%% file: sections_new/coulomb/01_intro.tex
\section{The Villain model and the 2D Coulomb gas}
\label{sec:coulomb}

\begin{definition}[Villain model]
  \label{def:VILLAIN}
      Fix $d\in\Z_{\geq 1}$ and $\beta\in\R_{\geq 0}$.
    For any domain $\Lambda$ and $\zeta:\Z^d\to\S^1$,
    the \emph{Villain model} on $\Lambda$ with boundary condition $\zeta$
    at inverse temperature $\beta$ is the probability measure  on $\sigma\in(\S^1)^\Lambda$ given by
    \[
        \diff\mu_{\Villain,\Lambda,\beta}^\zeta(\sigma) = \frac1{Z_{\Villain,\Lambda,\beta}^\zeta} e^{-H_{\Villain,\Lambda,\beta}^\zeta(\sigma)} \diff\sigma,
    \]
    where $Z_{\Villain,\Lambda,\beta}^\zeta$ is the normalising constant,
    $\diff\sigma$ the Haar measure, and
    \begin{multline}
         e^{-H_{\Villain,\Lambda,\beta}^\zeta(\sigma)}
         =
         \Big(\prod_{\substack{\{x,y\}\subset\Lambda\\x\sim y}}p_\beta(\sigma_x,\sigma_y)\Big)
            \Big(\prod_{\substack{x\in\Lambda,\,y\in\Lambda^c\\x\sim y}}p_\beta(\sigma_x,\zeta_y)\Big);
            \\
         p_\beta(z,z'):=\sum_{m\in\Z} e^{-\frac{\beta}{2}\big(\arg(z)-\arg(z')+2\pi m\big)^2}.
    \end{multline}
    Notice that the Hamiltonian is defined somewhat atypically and incorporates
    the dependency on $\beta$; more on this later.
    The \emph{critical inverse temperature} is defined as the largest $\beta_c^{\Villain}=\beta_c^{\Villain}(d)\in(0,\infty)$
    such that $\mu_{\Villain,\Lambda_n,\beta}^{+1}[\sigma_0]$ decays exponentially fast in $n$
    for any fixed $\beta\in[0,\beta_c^{\Villain})$.
    Classical arguments based on correlations inequalities imply the existence of an infinite-volume Gibbs measure with $+1$ boundary conditions at inverse temperature $\beta\geq 0$, that we refer to as $\mu^{+1}_{\Villain, \beta}$.
    The label $\Villain$ is omitted from notations when it is clear from the context.
\end{definition}

\begin{theorem}[The Villain model is an EFIID]
  \label{thm:VILLAIN_EFIID}
  Consider the Villain model in dimension $d\in\Z_{\geq 2}$
        at inverse temperature $\beta\in[0,\beta_c^{\Villain}(d))$.
        Then the infinite-volume Gibbs measure $\mu_{\Villain,\beta}^{+1}$
        is an EFIID.
\end{theorem}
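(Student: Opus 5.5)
The plan is to run the XY argument of Part~\ref{part:XY} verbatim for the Villain model. The only model-specific inputs there were the monotone coordinate representation (Lemmas~\ref{lemma_density_quintuple}, \ref{lemma_monotonicity_angles}, \ref{lemma_monotonicity_edges}) and the spatial mixing estimate (Lemma~\ref{lem:spatial_mixing_XY}). Everything downstream uses only the abstract Properties~\ref{prop:mono}, \ref{prop:digits} and~\ref{prop:markov}b together with mixing. This includes the Glauber construction (Lemma~\ref{lem:glauber_XY}), space-time mixing (Proposition~\ref{prop:space_time_mixing_XY}, via Lemma~\ref{lem:tree_eps_perco_XY} and the Harel--Spinka renormalisation) and the coarse-graining of Section~\ref{sec:efiid_XY}. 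So it suffices to produce the analogous representation and mixing for $p_\beta$.

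\textbf{Step 1: coordinate representation.} Write $\sigma_u=\xi_u\cos\alpha_u+i\zeta_u\sin\alpha_u$ as before. For the XY weight, the key facts were the following. The edge weight, viewed as a function of the signs, factorises into a part depending on $\xi_u\xi_v$ and a part depending on $\zeta_u\zeta_v$, each a ferromagnetic Ising coupling. The couplings are monotone in $\alpha$ (increasing for $\xi$, decreasing for $\zeta$).

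For the Villain weight I would use the Brownian-interpolation description rather than an explicit factorisation. The function $p_\beta(z,z')$ is exactly the heat kernel on $\S^1$ at time $\beta^{-1}$. Attach to each edge a Brownian bridge on $\S^1$ of duration $1/\beta$ from $\sigma_u$ to $\sigma_v$. Let $\omega$ be the set of edges whose bridge avoids the imaginary axis, and $\eta$ the set whose bridge avoids the real axis. Reflection of the bridge across the axes then gives the analogue of Lemma~\ref{lemma_density_quintuple}: conditional on $\alpha$, $\omega$ and $\eta$ are independent FK$(q=2)$ percolations. Their edge weights are
\[
p^\omega_{uv}=\P[\text{bridge from }(\alpha_u,\alpha_v)\text{ avoids }i\R]
\]
and the analogous $p^\eta_{uv}$. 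The monotonicity needed for Lemmas~\ref{lemma_monotonicity_angles} and~\ref{lemma_monotonicity_edges} is then a statement about one-dimensional Brownian bridges. Points further from an absorbing line are less likely to hit it, so $p^\omega$ is increasing in $\alpha$ and $p^\eta$ is decreasing. One must also check the Holley criterion for the conditional density of $\alpha_u$. This reduces to log-supermodularity of the edge factors in $(\alpha_u,\alpha_v)$, which I expect follows from total positivity of the killed heat kernel on a half-line (Karlin--McGregor).

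\textbf{Step 2: mixing and the rest.} The mixing estimate is proved as in Section~\ref{sec:XY_spatial_mixing}. Rotate by $-\pi/4$ and note that the two boundary conditions differ by flipping $\zeta$. The total variation distance is then bounded by an $\eta$-connection probability, which is comparable to $\mu^{+1}_{\Villain,\Lambda_{2n},\beta}[\sigma_x]$. This decays exponentially for $\beta<\beta_c^{\Villain}(d)$ by definition. The Glauber dynamic with matching digits is built exactly as in Lemma~\ref{lem:glauber_XY}, since the conditional density of $\alpha_u$ is continuous and bounded away from zero on $[0,\pi/2]$. The remaining steps then go through unchanged, giving the EFIID, since the updates again only read $\alpha$ on neighbours and the $\omega,\eta$-clusters of neighbours.

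The main obstacle is Step 1, specifically the monotonicity and Holley verification for the Villain edge factors. Unlike the XY case, there is no closed form for these factors. Total positivity of the heat kernel absorbed at an axis is the tool I would try first.
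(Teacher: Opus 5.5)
Your overall route is the paper's: port Part~\ref{part:XY} through a Brownian (cable-graph) coordinate representation, then reuse the mixing estimate, the matching-digits dynamic and the coarse-graining. However, Step~1 asserts something false, and it is exactly the point the paper singles out as the one genuine difference. For the Villain model, $\omega$ and $\eta$ are \emph{not} conditionally independent given $\alpha$. In the XY case, independence came from the two coordinates of a \emph{planar} Brownian bridge being independent. With bridges on $\S^1$, both $\omega_{uv}$ and $\eta_{uv}$ are functionals of the same one-dimensional angular path. Your reflections do show that, given $(\alpha,\omega,\eta)$, $\xi$ and $\zeta$ are independent and uniform over $\omega$- resp.\ $\eta$-cluster flips. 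Consequently, given $\alpha$, the law of $(\omega,\eta)$ is proportional to $2^{k(\omega)+k(\eta)}\prod_e a^e_{\omega_e\eta_e}$, with $a_{11}=Q$, $a_{10}=H-Q$, $a_{01}=H'-Q$ and $a_{00}=K-H-H'+Q$. Here $K$ is the time-$1/\beta$ heat kernel on the circle, and $Q,H,H'$ are the kernels killed outside the arcs $(0,\pi/2)$, $(-\pi/2,\pi/2)$ and $(0,\pi)$, all evaluated at $(\alpha_u,\alpha_v)$. Independence would need $a_{11}a_{00}=a_{10}a_{01}$, i.e.\ $QK=HH'$, and this fails. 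For small $\beta$ (long bridges), $QK$ decays like $e^{-2/\beta}$ while $HH'$ decays like $e^{-1/\beta}$, coming from the principal Dirichlet eigenvalues $2$ and $1/2$ of arcs of length $\pi/2$ and $\pi$. The same conclusion holds at the spin level. Independence would force each factor $p_\beta$ to split as $f(\xi_u\xi_v)g(\zeta_u\zeta_v)$, which is equivalent to $K(\theta)K(\theta+\pi)$ being constant. This holds for $e^{\beta\cos\theta}$, but for $p_\beta$, with $q=e^{-1/(2\beta)}$, one gets $K(\theta)K(\theta+\pi)\propto 1-2q^2-2q^2\cos 2\theta+O(q^4)$.

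Two of your steps therefore fail as stated: your analogue of Lemma~\ref{lemma_monotonicity_edges} (two independent FK$(2)$ models with monotone edge weights), and the claim that the dynamic is built ``exactly as in Lemma~\ref{lem:glauber_XY}''. What you actually have is a coupled, Ashkin--Teller-like pair of percolations, and neither marginal is an FK$(2)$ measure with weights $p^\omega,p^\eta$. The paper's fix is to resample the pair $(\omega_e,\eta_e)$ jointly from its conditional law, one edge at a time, monotonically for the order $(-\omega,\eta)$. A natural way to do this is to sample $\eta_e$ first and then $\omega_e$ given $\eta_e$. Besides monotonicity in $\alpha$ and in the cluster data, this requires a cross condition: $\omega_e$ must be stochastically decreasing in $\eta_e$, i.e.\ $a_{10}a_{01}\ge a_{11}a_{00}$, equivalently $HH'\ge QK$. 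This is a negative correlation between ``avoid $\pm i$'' and ``avoid $\pm 1$'' for the angular bridge. Similarly, the conditional density of $\alpha_u$ is no longer a product of single-cluster factors, so the Holley check must be carried out with the coupled weights. Total positivity of each killed kernel separately does not handle this cross term. A minor slip: your monotonicity directions are reversed. As $\alpha$ grows the endpoints approach $\pm i$, so $p^\omega$ decreases and $p^\eta$ increases, which is what $\preceq$ (with $-\omega$) requires.
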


\begin{proof}
    We claim that the proof for the XY model applies to the Villain model with minor modifications.
    This holds true essentially because the Villain model may be seen as an XY model on the cable graph,
    or alternatively as a limit of XY models on graphs where the edges are replaced by multiple edges
    connected in series (this was already observed by Berezinskii).

    The only problem with the Villain model, is that if we consider the same monotone representation (on vertices and edges) as for the XY model,
    then $\omega$ and $\eta$ are no longer independent conditional on $\alpha$.
    More generally, the precise formulas for the densities must be modified,
    but this poses no real problem.

    More precisely:
    \begin{itemize}
        \item The formulas for the densities in Lemmas~\ref{lemma_density_quintuple} and
    \ref{lemma_monotonicity_angles} must be modified,
    but the qualitative monotonicity properties still hold true,
    \item The independence of $\omega$ and $\eta$ in Lemmas~\ref{lemma_density_quintuple}
    and \ref{lemma_monotonicity_edges} is lost, but the edges can still be resampled in a monotone fashion, by sampling both $\omega$ and $\eta$ from the conditional distribution,
    on one edge of the underlying graph at a time.
    \end{itemize}
    The Glauber dynamic with matching digits can still be constructed as for the XY model.
    The rest of the proof remains identical.
\end{proof}

\begin{theorem}[Analyticity of the free energy of the Villain model]
  \label{thm:VILLAIN_ANALYTICITY}
      Fix $d\in\Z_{\geq 1}$.
    Then
    \[
        \mathbf{\mathfrak{f}}_{\Villain} :\R_{\geq 0}\to\R,\,\beta\mapsto \lim_{n\to\infty} \frac1{|\Lambda_n|} \log Z_{\Villain,\Lambda_n,\beta}^{+1},
    \]
    the free energy of the model, is analytic on $[0,\beta_{c}^{\Villain}(d))$.
\end{theorem}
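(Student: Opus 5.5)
The plan is to follow exactly the template used for Theorems~\ref{thm:analyticity_XY} and~\ref{thm:analyticity_SWM} in Subsection~\ref{subsec:conditional_proofs_2}: combine the EFIID property (Theorem~\ref{thm:VILLAIN_EFIID}) with the general analyticity result Theorem~\ref{thm:EFIID_implies_analyticity}. The point $\beta=0$ and the case $d=1$ need separate treatment. At $\beta=0$ the measure is a product measure, which is trivially an EFIID with local sets $\{u\}$. For $d=1$ the free energy can be computed exactly by a transfer operator, or more simply the partition function with $+1$ boundary condition factorises along the chain into convolutions of the heat-kernel-type weight $p_\beta$. The resulting free energy is the logarithm of the top eigenvalue of a positive compact operator, which depends analytically on $\beta>0$ by Kato--Rellich perturbation theory.

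Fix $\beta_0\in[0,\beta_c^{\Villain}(d))$ and let $\mu:=\mu^{+1}_{\Villain,\beta_0}$. The main subtlety compared with the XY case is that the Villain Hamiltonian depends on $\beta$ non-linearly: there is no single observable $F_A$ with $H_\beta=\beta\sum_x F_A(\tau_x\sigma)$. Writing $\theta=\arg(\sigma_x)-\arg(\sigma_y)$ and $V_\beta(\theta):=-\log p_\beta(\theta)$, I would instead define, for $\beta$ near $\beta_0$, the local observable
\[
F_A^{\beta}(\sigma):=-\sum_{i=1}^d\big(V_{\beta}(\sigma_0,\sigma_{e_i})-V_{\beta_0}(\sigma_0,\sigma_{e_i})\big), \qquad A=\{0,e_1,\ldots,e_d\}.
\]
Since $p_\beta$ is bounded above and below by positive constants uniformly in $\theta$ for $\beta$ in a compact subset of $(0,\infty)$, this observable is bounded. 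Moreover, $\beta\mapsto V_\beta(\theta)$ extends holomorphically to a complex neighbourhood of $\beta_0$, uniformly in $\theta$, because the theta series converges uniformly and stays away from $0$. Up to boundary terms of order $O(|\partial\Lambda_n|)=o(|\Lambda_n|)$, which are harmless since the edge potentials are uniformly bounded, we have
\[
\mathbf{\mathfrak{f}}_{\Villain}(\beta)-\mathbf{\mathfrak{f}}_{\Villain}(\beta_0)=\lim_{n\to\infty}\frac1{|\Lambda_n|}\log\mu\Big[\exp\Big(\sum\nolimits_{x\in\Lambda_n}F^\beta_A(\tau_x\sigma)\Big)\Big].
\]
This uses the standard replacement of the finite-volume measure by the infinite-volume one, exactly as in the SWM proof.

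Theorem~\ref{thm:EFIID_implies_analyticity} is stated only for the linear perturbation $\epsilon F_A$. I would therefore invoke the mechanism behind it rather than the statement itself. Expanding $F^\beta_A=\sum_{j\ge1}(\beta-\beta_0)^jG_j$ with $\sup|G_j|\le K r^{-j}$ gives a one-variable analytic family of bounded local observables with $\sup|F^\beta_A|\le C|\beta-\beta_0|$ on a complex disc. The cluster expansion of~\cite{Ott_Analyticity_RCM}, built on the dependency encoding measure furnished by the EFIID, only uses smallness of $|e^{F}-1|$ on each site together with locality. It therefore yields a holomorphic extension of $\frac1{|\Lambda_n|}\log G_n(\beta)$ that is bounded uniformly in $n$ on a small complex disc around $\beta_0$. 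Vitali's theorem then upgrades the real pointwise convergence, which holds by subadditivity, to analyticity of the limit.

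I expect the main obstacle to be precisely this step: verifying that the Ott cluster expansion tolerates a $\beta$-dependent observable that is holomorphic in $\beta$ rather than linear in $\epsilon$. An alternative I would try first is a two-parameter trick. Apply Theorem~\ref{thm:EFIID_implies_analyticity} to each truncation $\sum_{j\le J}\epsilon_j G_j$ jointly, via a multivariate version of the same cluster expansion, and conclude by Hartogs-type uniform bounds. The rest, namely the boundary term control and the endpoint $\beta=0$, is routine.
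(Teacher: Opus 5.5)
For $\beta_0\in(0,\beta_c^{\Villain}(d))$ your argument is the paper's. The paper's proof notes that Theorem~\ref{thm:EFIID_implies_analyticity} cannot be applied verbatim because the Villain Hamiltonian is not linear in $\beta$. It then observes that the cluster expansion of~\cite{Ott_Analyticity_RCM}, as in the random-cluster case, only needs the per-edge difference $V_\beta-V_{\beta_0}$ to be small uniformly in $\sigma$ and analytic in $\beta$. You verify this in more detail than the paper does. The two-parameter/Hartogs detour is unnecessary, because each polymer activity is then directly holomorphic and uniformly bounded. Your separate treatment of $d=1$ is a real addition, since Theorem~\ref{thm:VILLAIN_EFIID} is only stated for $d\geq 2$. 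In $d=1$ everything is explicit: the transfer operator is convolution by $p_\beta$, with top eigenvalue $(2\pi\beta)^{-1/2}$ when $\diff\sigma$ is normalised.

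The step that fails is your claim that the endpoint $\beta=0$ is routine. Unlike the XY model, the Villain weight is singular at $\beta=0$. By Poisson summation,
\[
\sqrt{2\pi\beta}\,p_\beta(\theta)=1+2\sum_{k\geq1}e^{-k^2/(2\beta)}\cos(k\theta),
\]
so $p_0\equiv\infty$. Moreover, $\beta\mapsto V_\beta(\theta)$ has a logarithmic singularity at $0$, and an essential one coming from the $e^{-k^2/(2\beta)}$ terms. Hence there is no bounded holomorphic family $F_A^\beta$ around $\beta_0=0$. The fact that the limiting measure is a product measure, and hence trivially an EFIID, does not help.

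In fact the conclusion is false at $\beta=0$. With $\diff\sigma$ normalised, bounding $p_\beta$ from below edge by edge gives
\[
\mathbf{\mathfrak{f}}_{\Villain}(\beta)\geq-\tfrac d2\log(2\pi\beta)+d\log\big(1-2\textstyle\sum_{k\geq1}e^{-k^2/(2\beta)}\big)\to+\infty
\]
as $\beta\downarrow 0$; your own $d=1$ computation gives exactly $-\frac12\log(2\pi\beta)$. For $d\geq 2$, even the renormalised quantity $\mathbf{\mathfrak{f}}_{\Villain}(\beta)+\frac d2\log(2\pi\beta)$ is a non-zero convergent series in $e^{-1/(2\beta)}$. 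Its leading term comes from plaquettes and is of order $e^{-2/\beta}$, so it is flat but not analytic at $0$.

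So what your argument, and the paper's, actually proves is analyticity on $(0,\beta_c^{\Villain}(d))$. The paper's own hypothesis, that $H_{\beta+\epsilon}-H_\beta$ be uniformly small and analytic in $\epsilon$, also fails at $\beta=0$. You should restrict to that open interval rather than claim the endpoint.
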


\begin{proof}
    We may not directly apply Theorem~\ref{thm:EFIID_implies_analyticity} because
    the Hamilontian of the Villain model does not scale linaerly with $\beta$.
    This is not really a problem for the applicability of Theorem~\ref{thm:EFIID_implies_analyticity}:
    the situation is exactly the same for the random-cluster model handled in~\cite{Ott_Analyticity_RCM}, and we refer to that work for details.
    Indeed, the only property on the Hamiltonian that we require, is the property that
    \[
        H_{\Villain,\Lambda_n,\beta+\epsilon}^0-H_{\Villain,\Lambda_n,\beta}^0
    \]
    tends to zero uniformly in $\sigma$ when $\epsilon\to 0$,
    and that the the dependence on $\epsilon$ is itself analytic.
\end{proof}